\documentclass[10pt]{amsart}

\usepackage[latin2]{inputenc}
\usepackage{amsmath}
\usepackage{graphicx}
\usepackage{amssymb}
\usepackage{esint}
\usepackage{color}
\usepackage{amsthm}
\usepackage{epsfig}
\usepackage{enumitem}
\usepackage{mathtools}
\usepackage{esvect}
\usepackage{tikz}
\usepackage{mathrsfs}
\usetikzlibrary{calc,intersections,through,backgrounds,patterns,arrows.meta, math,through}
\usepackage[english]{babel}
\usepackage[linktocpage=true,colorlinks=true,linkcolor=Blue,citecolor=Green]{hyperref}

\newtheorem{theorem}{Theorem}
\newtheorem{proposition}[theorem]{Proposition}
\newtheorem{lemma}[theorem]{Lemma}

\newtheorem*{theorem*}{Theorem}

\theoremstyle{definition}
\newtheorem{definition}{Definition}
\newtheorem{remark}[definition]{Remark}

\def\XXint#1#2#3{{\setbox0=\hbox{$#1{#2#3}{\int}$ }
\vcenter{\hbox{$#2#3$ }}\kern-.6\wd0}}

\definecolor{Yellow}{rgb}{0.95,0.9,0.0} 
\definecolor{Red}{rgb}{0.8,0.1,0.1}
\definecolor{Green}{rgb}{0.1,0.65,0.2}
\definecolor{Blue}{rgb}{0.1,0.1,0.8}
\definecolor{Purple}{rgb}{0.7,0.1,0.7}
\definecolor{Grey}{rgb}{0.6,0.6,0.6}

\definecolor{YELLOW}{rgb}{0.95,0.9,0.0} 
\definecolor{RED}{rgb}{0.8,0.1,0.1}
\definecolor{GREEN}{rgb}{0.25,0.65,0.1}
\definecolor{BLUE}{rgb}{0.1,0.1,0.8}
\definecolor{PURPLE}{rgb}{0.7,0.1,0.7}

\newcommand{\supp}{\operatorname{supp}} 
\renewcommand{\div}{\operatorname{div}}

\DeclareMathOperator*{\esssup}{ess\,sup}

\renewcommand{\epsilon}{\varepsilon}

\allowdisplaybreaks[4]

\newcommand{\dx}{\mathrm{d}x}
\newcommand{\dt}{\mathrm{d}t}

\newcommand{\eps}{\varepsilon}

\renewcommand{\vec}[1]{{\operatorname{#1}}}
\newcommand{\n}{\vec{n}}

\begin{document}

\title[A phase-field model for the Verigin problem] 
{A phase-field model for the Verigin problem: weak solutions and sharp-interface limit}

\author{Anna Kubin}
\address{Institut f\"ur Analysis und Scientific Computing, Technische Universit\"at Wien, Wiedner Hauptstrasse 8-10, 1040 Vienna, Austria}
\email{anna.kubin@tuwien.ac.at}

\author{Alice Marveggio}
\address{Institute for Applied Mathematics, Universit{\"a}t Bonn, Endenicher Allee 62, 53115 Bonn, Germany}
\email{alice.marveggio@hcm.uni-bonn.de}

\begin{abstract} 
We introduce a novel phase-field model approximating the Verigin problem with phase
transition. 
For fixed interface thickness $\eps>0$, we construct weak solutions through
a time-discrete variational scheme combining a Wasserstein minimizing-movement
for the density with Sch\"atzle's selection procedure for the phase field,
which yields strong compactness and an almost-minimizing property.
We then investigate the sharp-interface limit $\eps \downarrow0$. 
For well-prepared initial data, we establish strong compactness of the densities and phase fields, and we prove convergence of the time-integrated diffuse interfacial energy to the perimeter. The limiting evolution is identified as a weak solution of the
Verigin problem with phase transition, satisfying an optimal energy dissipation inequality.

\vspace{5pt}
    \noindent
    \textbf{Keywords:} 
    Phase-field models, compressible two-phase flow, gradient flows, sharp-interface limit, free-boundary problems.
    
    \vspace{5pt}
    \noindent
    \textbf{Mathematical Subject Classification (MSC) 2020:} 
    35R35, 
    35B25, 
    49Q22, 
    76S05, 
    35D30. 
\end{abstract} 


\maketitle

\section{Introduction}
The Verigin problem describes the evolution of a compressible two-phase flow in a porous medium. A typical situation arises when a porous medium containing a compressible fluid is flooded with a second compressible fluid, which displaces the former under isothermal conditions. If phase transitions between the two phases are also taken into account, the resulting system is known as the Verigin problem with phase transition, a free-boundary problem that can be regarded as the compressible counterpart of the Muskat problem.

To the best of our knowledge, the mathematical literature on the Verigin problem with phase transition has until recently been restricted to the theory of classical smooth solutions (see~\cite{PrussSim,PrussSimWielke} and the references therein). In particular, short-time existence, stability, and convergence to equilibrium for solutions that do not develop singularities are studied in~\cite{PrussSim}. More recently, in a joint work with Tim Laux~\cite{KLM}, we established the first global-in-time existence result for weak solutions to the 
Verigin problem with phase transition.
In particular, we constructed distributional solutions through an implicit time-discretization scheme exploiting the gradient-flow structure of the system.

In this manuscript, we introduce a novel phase-field model approximating the Verigin problem with phase transition and study both the existence of weak solutions and its sharp-interface limit. The starting point is the underlying energy of the system, which is the phase-field analogue of the energy associated with the sharp-interface Verigin problem.
For $\Omega \subset \mathbb{R}^d$ open set with smooth boundary and $T>0$ finite time horizon, denoting by $\rho: \Omega \times [0,T) \rightarrow [0,\infty)$ the total density of the two-phase fluid and by $\varphi: \Omega \times [0,T) \rightarrow \mathbb{R}$ the phase-field variable, at each time $t \in (0,T)$ the total energy is given by
\begin{equation} \label{eq:energyintro}
    \mathcal{E}_\eps(\rho,\varphi):=  \int_{\Omega} \frac\varepsilon2 |\nabla \varphi|^2 + \frac1\varepsilon F(\varphi) \, \mathrm{d}x   +   \int_{\Omega}   \Big [ \tfrac{1}{m-1}	 \rho^m +  \varphi ( \lambda   - \rho) \Big] \, \mathrm{d}x, \quad m > 1,
\end{equation}
where $\eps>0$ represents the interface thickness and $F$ is a standard double-well potential, i.e., $F(s)=\frac{1}{4}s^2(s-1)^2$ for $s \in \mathbb{R}$. The first term in~\eqref{eq:energyintro} is the Cahn--Hilliard energy~\cite{CahnHilliard} associated with the diffuse interface, which is well known to $\Gamma$-converge to the perimeter as $\eps \downarrow 0$~\cite{ModicaMortola}.
The second term in~\eqref{eq:energyintro} is the bulk free energy of the density, while the last term couples the phase and density variables accounting for the phase transition. 

Phase-field models replace the evolving sharp interface by a thin diffuse transition layer of width $\eps >0$, thereby avoiding the explicit tracking of the sharp interface (or free boundary) and providing a formulation on the whole spatial domain. 
Besides providing a convenient analytical formulation, this approach is particularly advantageous from a numerical point of view, since diffuse-interface models can be approximated by standard discretization schemes for PDEs and naturally accommodate changes in the topology of the evolving interface.

Our first main result (cf.~Theorem~\ref{theo:existence}) concerns the existence of weak solutions to the diffuse-interface model for $m \geq 2$ and $d \in \{2,3\}$ (cf.~Remark~\ref{rk:existence} and Remark~\ref{remarkd}).
We construct such weak solutions through an implicit time discretization scheme exploiting the variational structure of the system. 
At each time step, we first select a phase-field equilibrium associated with the previous density, 
and then perform a Wasserstein minimizing movement for the density with the selected phase field kept fixed. The selection mechanism ensures the decay of the phase-field energy, 
strong compactness of the phase field, and an almost-minimizing property for the phase-field energy in the limit. 
Passing to the continuous-time limit, we obtain weak solutions to a phase-field system, whose strong formulation, for $\eps >0$, reads 
\begin{align} \label{eq:phasefield}
\begin{cases}
    \partial_t \rho_\eps + \div (\rho_\eps \mathbf{u}_\eps) = 0 \\
   \rho_\eps \mathbf{u}_\eps  = \div \mathbb{T}_\eps - \nabla (\rho_\eps^m - \lambda \varphi_\eps) \\
   \mathbb{T}_\eps = \Big(\frac{\eps}{2} |\nabla \varphi_\eps|^2 + \frac{1}{\eps} F(\varphi_\eps)  \Big) \operatorname{Id}  - \eps \nabla \varphi_\eps\otimes  \nabla \varphi_\eps \\
 - \eps \Delta \varphi_{\epsilon} + \frac1\eps F^\prime(\varphi_{\epsilon}) + \lambda-\rho_{\epsilon}= {0}
   \end{cases}
\end{align}
on $\Omega \times (0,T)$. This system is endowed with Neumann boundary conditions for both $\rho_\eps$ and $\varphi_\eps$ and with initial conditions $\rho_\eps (\cdot, 0)= \rho_0$ and $\varphi_\eps(\cdot, 0) = \varphi_0 $.
Here, $ \mathbf{u}_\eps $ is the velocity field and $ \mathbf{j}_\eps := \rho_\eps \mathbf{u}_\eps$ is the mass flux degenerating in $\{ \rho_\eps =0 \}$.

Our construction is closely related to variational approaches to dissipative
evolution equations. The density evolution is constructed by a
Wasserstein minimizing-movement scheme in the spirit of~\cite{JKO,Otto1,Chambolle-Laux,JKM_Muskat}, whereas the phase-field variable is required to remain at equilibrium at each time step, meaning that $\varphi_\eps$ satisfies a stationary Allen-Cahn equation with $\rho_\eps$ as a forcing term. The latter equilibrium problem is
nonconvex and may admit several solutions, so that an arbitrary choice of
equilibria would not provide the strong compactness needed in the limit.
To overcome this issue, we adapt a selection argument adopted by Sch\"atzle~\cite{Schaetzle} in his analysis of a quasistationary phase-field model of Caginalp type. This selection procedure allows us to follow suitable branches of equilibria
while preserving the decay of the phase-field energy. 
For the density variable, strong compactness follows by combining the   control in time  provided by the Wasserstein distance with the compactness in measure criterion  of Rossi and Savar\'e~\cite{Rossi-Savare}.  More generally, this approach fits into the variational framework for the stability and convergence of gradient-flow evolutions (see, e.g.,~\cite{RossiSavare2006,MielkeRossiSavare}).
The Wasserstein variational approach also plays a central role in the work of Kroemer and Laux~\cite{KroemerLaux} (see also~\cite{ElbarPoiatti}) for the Cahn--Hilliard equation with disparate mobilities,
where the underlying gradient-flow structure is exploited to construct weak
solutions.

Our second main result  (cf.~Theorem~\ref{theo:sharplimit}) concerns the study of the sharp-interface limit $\eps\downarrow0$. 
We prove strong convergence of $\rho_\eps$ to a density $\rho$ and of the phase field $\varphi_\eps$ to a characteristic function $\chi$, as the parameter $\eps$ vanishes. Moreover, we identify a limiting velocity field $\mathbf{u}:\Omega \times [0,T) \rightarrow \mathbb{R}^d$ such that $(\rho,\chi,\mathbf{u})$ is a distributional solution of the Verigin problem with phase transition: 
\begin{align}
\begin{cases}
	\partial_t \rho + \div(\rho \mathbf{u})=  0  & \; \text{ in } \Omega , \\
    [\![\rho]\!]V_\Gamma =	[\![ \rho  \mathbf{u} \cdot \n_\Gamma ]\!] & \;  \text{ along } \Gamma , 
\end{cases}
\quad \quad 
\begin{cases}  
    \rho \mathbf{u}= - \nabla \pi    & \; \text{ in } \Omega ,  \\
	[\![\pi]\!]= \sigma H  &\;  \text{ along } \Gamma,
\end{cases}
\end{align}    
with boundary conditions
\begin{align} \notag 
       \nabla\rho \cdot \n_{\partial \Omega} &= 0  \; \; \;   \text{ along } \partial \Omega, \\ \notag 
        \n_\Gamma \cdot \n_{\partial \Omega} &= 0 \; \; \; \text{ along } \partial\Omega \cap  \Gamma,
\end{align}
and with initial data $\chi(\cdot , 0)= \chi_0 $ and $\rho(\cdot,0)=\rho_0$.
Here, $\pi := \rho^m - \lambda \chi$ denotes the pressure, $\sigma>0$ is the surface tension, 
 $	[\![\cdot ]\!] $ denotes the jump across the interface $\Gamma := \partial \{ \chi = 1\}$, $V_\Gamma$ is the velocity of $\Gamma$ in the direction of the normal $\nu_\Gamma$, and $H_\Gamma$ is the (scalar) mean curvature of $\Gamma$.

Starting from the seminal
$\Gamma$-convergence result of Modica and Mortola~\cite{ModicaMortola}, substantial work has been devoted to understanding whether the convergence of energies can be extended from the static setting to the associated gradient-flow evolution~\cite{SandierSerfaty}. 
For convergence towards smooth interface evolutions, we refer to the classical works~\cite{Chen,AlikakosBatesChen,DeMottoniSchatz,CaginalpChen}, as well as to subsequent extensions covering anisotropic~\cite{AlfaroGarckeHilhorstMatanoSchaetzle} and boundary effects~\cite{AbelsMoser}, and to more recent quantitative results~\cite{FischerLauxSimon,FischerMarveggio}.
For the (evolutionary) Allen--Cahn equation,
the sharp-interface limit leads to mean curvature flow, and convergence has been
established for several notions of weak geometric evolution. 
These include viscosity solutions~\cite{EvansSonerSouganidis}, 
Brakke's motions~\cite{Ilmanen,MizunoTonegawa}, as well as De Giorgi-type notions~\cite{HenselLauxvarifold,Steinke} and $BV$ formulations~\cite{LauxSimon,HenselLauxcontact}. Extensions to weighted and anisotropic mean curvature flows can be found in~\cite{GanediMarveggioStinson,LauxUllrichStinson}.
One of the main advantages of weak solution concepts is that they allow for a description of the evolution beyond the formation of singularities.
Among early sharp-interface convergence results for phase-field models,
Sch\"atzle~\cite{Schaetzle} studied a quasistationary phase-field
model of Caginalp type and proved its convergence
to weak solutions of the Stefan problem with Gibbs--Thomson law in the sense of~\cite{Luckhaus1,Luckhaus0}. 
Related results on the rigorous derivation of the Gibbs--Thomson law from the first variation of the diffuse-interface energy can be found in~\cite{LuckhausModica,RogerTonegawa}.
For conserved phase-field
dynamics, we recall Chen's global-in-time convergence result for the Cahn--Hilliard equation towards a weak varifold formulation of the Mullins--Sekerka flow~\cite{Chen96}. More recently, Kroemer and Laux~\cite{KroemerLaux} also proved the convergence of the
Cahn--Hilliard equation with disparate mobilities to the (weak) Hele--Shaw flow,
under a suitable convergence assumption for the diffuse interfacial energies. The anisotropic and inhomogeneous setting was more recently addressed by Elbar and Poiatti~\cite{ElbarPoiatti}.

A key point in the sharp-interface analysis is the convergence of the interfacial energies. 
The almost-minimizing property obtained for the weak solutions of~\eqref{eq:phasefield} allows us to prove the convergence of the time-integrated 
 diffuse interfacial energies to the perimeter as the interface thickness $\eps>0$ vanishes. 
This reflects the quasi-static structure underlying the Verigin problem (cf.~\cite{KLM}), which is also characteristic of the Stefan problem with Gibbs--Thomson law (cf.~\cite{Luckhaus1}), and, consequently, inherited by the corresponding phase-field approximation.
As a consequence, we obtain distributional, or equivalently $BV$, solutions of the Verigin problem with phase transition without imposing an \textit{a priori} convergence assumption on the interfacial energies, in the same spirit as in~\cite{KLM} (and in~\cite{Luckhaus0}). Such an assumption was introduced in the seminal work~\cite{LuckStur} and frequently appears in the literature under the name of ``energy convergence hypothesis'', 
both for Allen--Cahn-type approximations~\cite{LauxSimon,HenselLauxcontact,LauxUllrichStinson,GanediMarveggioStinson,Steinke} and for conserved phase-field models~\cite{KroemerLaux,LauxStinson,ElbarPoiatti}.
In contrast, this hypothesis is not needed in weaker varifold formulations (see, e.g.,~\cite{Ilmanen,HenselLauxvarifold,LauxTakasao}), where the limiting interface may carry nontrivial multiplicity.

The rest of the paper is organized as follows. 
In Section~\ref{sec:existence}, we introduce the phase-field approximation of the Verigin problem with phase transition and we establish the existence of weak solutions. More precisely, we first introduce the time-discrete variational scheme (cf.~Section~\ref{sec:minprob}), then derive the compactness properties of the discrete solutions (cf.~Section~\ref{sec:compactness}), including the strong compactness and almost-minimizing property of the phase field, and finally pass to the continuous-time limit (cf.~Section~\ref{sec:continuouslimit}). 
Section~\ref{sec:sharplimit} is devoted to the analysis of the sharp-interface limit $\eps\downarrow0$. We first prove strong compactness of the density and phase field, identify the limiting velocity (cf.~Section~\ref{sec:compsharp}), then establish the convergence of the time-integrated diffuse interfacial energy to the perimeter energy (cf.~Section~\ref{sec:energyconvsharp}), and finally show that the limiting triple is a distributional solution to the Verigin problem with phase transition (cf.~Section~\ref{sec:limitsharp}). 
Finally, Appendix~\ref{appendixA} recalls a compactness  in measure result from~\cite{Rossi-Savare}, while Appendix~\ref{appendixB} collects some auxiliary technical results from~\cite{Schaetzle}.

\medskip

\paragraph{\bfseries Notation}
Let $d$ be an integer number larger than or equal to $2$, and let $\Omega \subset \mathbb{R}^d$ be a smooth open set with boundary $\partial \Omega$ and outer normal vector field $\nu_\Omega$. 
We denote by $\mathcal{M}_b$ the set of bounded Radon measures, and $\mathcal{L}^d$ denotes the $d$-dimensional Lebesgue measure. 
For  $\mu \in \mathcal{M}_b(U)$ and $A \subset U$, $\mu\llcorner A$ is the restriction of $\mu$ to $A$. 
With a slight abuse of notation, for probability densities $\rho$, $\tilde \rho$ on $\Omega$, we denote by $W_2(\rho, \tilde \rho)$  the $2$-Wasserstein distance between the measures $\rho \mathcal{L}^d\llcorner\Omega$ and $\tilde \rho \mathcal{L}^d\llcorner\Omega$, and, given $F:\Omega \to \Omega$, we denote by $F_\# \rho$ the pushforward of $\rho \mathcal{L}^d\llcorner\Omega$ by $F$.
For any measurable set $E \subset \Omega$ we denote by $\chi_E$ its characteristic function.
We say that $E$ is a set of finite perimeter in $\Omega$ if $\chi_E\in BV(\Omega;\{0,1\}),$ and we denote its distributional derivative in $\Omega$ by $\nabla \chi_E$ and its total variation measure by $|\nabla \chi_E|$. 
Denoting by $\partial^\ast E$  the reduced boundary of the set $E$, we have $\int_\Omega \cdot \, \mathrm{d} |\nabla \chi_E| =  \int_{\partial^\ast E \cap \Omega} \cdot \, \mathrm{d} \mathcal{H}^{d-1}$, where $\mathcal{H}^{d-1}$ is the $(d-1)$-Hausdorff measure.
We denote by $L^1_0(\Omega)$ the quotient of $L^1(\Omega)$ by constant functions, namely
$L^1_0(\Omega):=L^1(\Omega)/\mathbb{R}$, endowed with the norm $\|\cdot\|_{L^1_0(\Omega)}:= \inf_{c \in \mathbb{R}} \|\cdot-c\|_{L^1(\Omega)}.$
Given a function space $X$ and functional $\mathcal{F}:X \to \mathbb{R}$, we denote by $\delta \mathcal{F}(\varphi) \in X'$ the first variation of $\mathcal{F}$ at $\varphi \in X$.
We denote by $\operatorname{Id}$ the identity matrix in $\mathbb{R}^{d \times d},$ for $a, b \in \mathbb{R}^d$ we set $a \otimes b:=(a_i b_j)_{ij} \in \mathbb{R}^{d \times d}$, and for $A, B \in \mathbb{R}^{d \times d}$ we set $A :B = A_{ij}B_{ij} \in \mathbb{R}.$

\section{Existence of weak solutions to the phase-field model}
\label{sec:existence}
Let $\Omega \subset \mathbb{R}^d$ be a 
bounded connected open set with smooth boundary, $\lambda \in (0,\infty)$ and $m \in [2,\infty)$.
For any measurable pair of functions $(\rho,\varphi): \Omega \to [0,\infty) \times \mathbb{R}$, we consider the total energy 
\begin{equation}\label{phasefieldenergy}
    \mathcal{E}_\eps(\rho,\varphi):=  \mathcal{E}_{\textnormal{int}, \eps}(\varphi)  +   \int_{\Omega}   \Big [ \tfrac{1}{m-1}	 \rho^m +  \varphi ( \lambda   - \rho) \Big] \, \mathrm{d}x, \quad \eps>0,
\end{equation}
where the diffuse interfacial energy $\mathcal{E}_{\textnormal{int}, \eps}(\varphi) $ is given by 
\begin{equation*}
\mathcal{E}_{\textnormal{int}, \eps}(\varphi) :=   \int_{\Omega} \frac\varepsilon2 |\nabla \varphi|^2 + \frac1\varepsilon F(\varphi) \, \mathrm{d}x \end{equation*}
and $F$ is a standard double-well potential, i.e., $F(s)=\frac{1}{4}s^2(s-1)^2$ for $s \in \mathbb{R}$.

Before stating our main existence result, we provide the notion of weak solution for~\eqref{eq:phasefield} as follows.
\begin{definition}[Weak solution to the phase-field model] 
\label{def:weaksol}
Let $\varepsilon >0$. Given initial data $\varphi_0 \in H^1(\Omega)$ and $\rho_0 \in L^m(\Omega)$ such that $\int_\Omega \rho_0 \, \dx =1 $, 
we say that $(\rho_\eps, \varphi_\eps, \mathbf{u}_\eps)$ is a weak solution to the phase-field model~\eqref{eq:phasefield} if the following properties hold.
\begin{itemize}
\item[i)] 
$\rho_\eps \in L^m(\Omega \times (0,T))$, $\varphi_\eps \in L^4(\Omega \times (0,T)) \cap L^2(0,T;H^1(\Omega))$ and $\mathbf{u}_\eps \in (L^2( \Omega \times (0,T);\rho_\eps \,\dx \dt))^d$.
    \item[ii)] The triple $(\rho_\eps, \varphi_\eps, \mathbf{u}_\eps)$  satisfies \begin{equation}\label{eq:transport}
        - \int_0^T \int_{\Omega} \rho_\eps (\partial_t \eta + \mathbf{u}_\eps  \cdot \nabla \eta ) \, \dx \dt = \int_{\Omega} \rho_0 \eta(\cdot,0) \, \dx
    \end{equation}
    for all $\eta \in C^\infty_c(\overline\Omega \times [0, T))$, 
    \begin{align}\label{eq:weaksol}
&- \int_0^T \int_\Omega \rho_\eps  \mathbf{u}_\eps \cdot \xi  \, \dx \dt \\
&= 
\int_0^T \int_{\Omega}  \bigg( \Big(\frac{\eps}{2} |\nabla \varphi_\eps|^2 + \frac{1}{\eps} F(\varphi_\eps)  \Big) \operatorname{Id}  - \eps \nabla \varphi_\eps\otimes  \nabla \varphi_\eps \bigg):\nabla \xi  
   \, \mathrm{d}x \dt  \notag \\
 & \quad -\int_0^T \int_{\Omega}  \mathrm{div} \xi\, \mathrm{d}\mu_\eps +\int_0^T \int_{\Omega} \lambda\varphi_\eps \mathrm{div} \xi \,  \dx \dt,  \notag
    \end{align}
    for all $\xi \in C^\infty_c(\overline \Omega \times [0, T); \mathbb{R}^d) $ with $\xi \cdot \nu_{\Omega} =0$ along $\partial \Omega \times (0,T)$, 
  where $\mu_\eps \in \mathcal{M}_b(\Omega \times (0,T))$ is such that $\mu_\eps \geq \rho_\eps^m \mathcal{L}^d \llcorner \Omega \otimes \mathcal{L}^1 \llcorner (0,T)$,
    and 
     \begin{equation}\label{eq:ACweak}
    \int_0^T \int_{\Omega} \bigg ( \eps \nabla \varphi_{\epsilon} \cdot \nabla \eta +  \Big (  \frac1\eps F^\prime(\varphi_{\epsilon}) + \lambda-\rho_{\epsilon} \Big ) \eta \bigg ) \, \dx \dt= {0},
\end{equation}
for all $\eta \in C^\infty_c(\overline\Omega \times [0, T))$.

    \item[iii)]  For almost every $T' \in [0,T]$, the energy dissipation rate 
    \begin{align}\label{eq:energydiss}
    \mathcal{E}_\eps(\rho_\eps(T'), \varphi_\eps(T')) + \int_0^{T'} \int_\Omega \rho_\eps |\mathbf{u}_\eps |^2\, \dx \dt \leq \mathcal{E}_\eps(\rho_0, \varphi_{0}) . 
    \end{align}
    holds true.
\end{itemize}
\end{definition}

We now state our main existence result for weak solutions to the phase-field model. The proof relies on the variational structure of~\eqref{eq:phasefield} and uses a time-discretization scheme inspired by~\cite{Schaetzle}.
\begin{theorem}[Existence of weak solutions]\label{theo:existence}
       Let $d \in \{2,3\}$ and $m \geq 2$.
       Let  $\rho_0 \in L^m(\Omega;[0, \infty))$ be  such that $\int_\Omega \rho_0 \, \dx =1 $ and let $\varphi_0 \in H^1(\Omega)$ be a minimizer of the functional $\varphi \mapsto \mathcal{E}_{\textnormal{int}, \eps}(\varphi) +\int_{\Omega} \varphi (\lambda-\rho_0) \, \dx $.
       Then there exists a weak solution $(\rho_\eps, \varphi_\eps, \mathbf{u}_\eps)$ to the phase-field model~\eqref{eq:phasefield}, in the sense of Definition~\ref{def:weaksol}, which additionally satisfies the almost-minimizing property
\begin{align} 
    & \int_0^T\int_{{\Omega}} \Big ( \frac{\varepsilon}{2}|\nabla \varphi_\eps|^2 + \frac{1}{\varepsilon} F(\varphi_\eps) \Big ) \, \dx \dt +\int_0^T \int_{{\Omega}} \varphi_\eps  (\lambda-\rho_\eps)  \, \dx \dt \notag \\
    &\le \int_0^T\int_{{\Omega}} \Big ( \frac{\varepsilon}{2}|\nabla \tilde \varphi|^2 + \frac{1}{\varepsilon} F(\tilde \varphi) \Big ) \, \dx \dt
    +  \int_0^T\int_{{\Omega}} \tilde \varphi (\lambda- \rho_\eps)  \, \dx\dt + \eps
    \label{eq:almostmin}
\end{align}
for every $\tilde \varphi \in L^1(0,T;H^1({\Omega}))$.
\end{theorem}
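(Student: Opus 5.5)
We construct the solution by a time-discrete scheme with step $h=T/N$, following the strategy announced in the introduction. We set $\rho^0:=\rho_0$, $\varphi^0:=\varphi_0$. Given $(\rho^{n-1},\varphi^{n-1})$, each step has two substeps.

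\emph{Selection of the phase field.} Following Sch\"atzle~\cite{Schaetzle} (the auxiliary results in Appendix~\ref{appendixB}), we pick $\varphi^n$ among the critical points of
\begin{equation*}
G_{\rho^{n-1}}(\varphi):=\mathcal{E}_{\textnormal{int},\eps}(\varphi)+\int_\Omega\varphi(\lambda-\rho^{n-1})\,\dx .
\end{equation*}
Concretely, $\varphi^n$ is a minimizer of $G_{\rho^{n-1}}$ restricted to a neighbourhood of $\varphi^{n-1}$ (say an $L^2$- or $L^1_0$-ball of small radius), or, if that constrained minimizer touches the constraint, a global minimizer. The selection is designed to give:
\begin{itemize}
\item[(a)] $\varphi^n$ solves the stationary Allen--Cahn equation with forcing $\rho^{n-1}$;
\item[(b)] the energy decreases, $G_{\rho^{n-1}}(\varphi^n)\le G_{\rho^{n-1}}(\varphi^{n-1})$;
\item[(c)] each jump $\|\varphi^n-\varphi^{n-1}\|$ is either small or comes with a definite energy drop of at least some $\delta_\eps$.
\end{itemize}
Property (c) bounds the number of large jumps by $\mathcal{E}_\eps(\rho_0,\varphi_0)/\delta_\eps$, and this yields a BV-type-in-time control of the piecewise constant interpolant $\varphi_h$.

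\emph{Wasserstein step for the density.} With $\varphi^n$ frozen, we take $\rho^n$ to be a minimizer of
\begin{equation*}
\rho\mapsto \frac{1}{2h}W_2^2(\rho,\rho^{n-1})+\int_\Omega\Big[\tfrac{1}{m-1}\rho^m-\varphi^n\rho\Big]\dx .
\end{equation*}
Existence follows from lower semicontinuity and from the bound $\varphi^n\in H^1\subset L^6$ for $d\le3$. Comparing $\rho^n$ with $\rho^{n-1}$ and summing the steps gives the discrete energy inequality
\begin{equation*}
\mathcal{E}_\eps(\rho^N,\varphi^N)+\sum_n \frac{1}{2h}W_2^2(\rho^n,\rho^{n-1})\le\mathcal{E}_\eps(\rho_0,\varphi_0).
\end{equation*}
The Euler--Lagrange equation is obtained by inner variations $\rho\mapsto(\mathrm{id}+s\xi)_\#\rho$. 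It reads
\begin{equation*}
\rho^n\mathbf u^n=-\nabla((\rho^n)^m)+\rho^n\nabla\varphi^n,\qquad \rho^n\mathbf u^n:=\rho^n\frac{T_n-\mathrm{id}}{h},
\end{equation*}
with $T_n$ the optimal transport map. The term $\rho^n\nabla\varphi^n$ is rewritten as $\div\mathbb T_\eps+\lambda\nabla\varphi^n$ using the Allen--Cahn equation. Here one must account for the time shift: $\varphi^n$ solves the equation with forcing $\rho^{n-1}$, not $\rho^n$. The error is controlled by $\int\nabla\varphi^n(\rho^n-\rho^{n-1})$, which vanishes in the limit thanks to the $W_2$ bound and $\varphi^n$ bounded in $W^{1,p}$ by elliptic regularity. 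Because $m\ge2$, $(\rho^n)^m$ lies in $W^{1,1}$, and testing against $\mathrm{div}\,\xi$ reproduces the pressure term.

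\emph{Compactness and passage to the limit.} We derive uniform bounds on $\rho_h\in L^\infty L^m$, $\varphi_h\in L^\infty H^1\cap L^4$ and $\int\rho_h|\mathbf u_h|^2$. Strong compactness of $\rho_h$ in $L^1$ follows from Rossi--Savar\'e (Appendix~\ref{appendixA}) with $W_2$ as the weak distance and $\int\rho^m$ as the coercive functional. Strong compactness of $\varphi_h$ in $L^2$ follows from the selection property (c), which gives equicontinuity up to finitely many jumps, together with $H^1$ compactness in space. With these we pass to the limit in the transport equation~\eqref{eq:transport}, in~\eqref{eq:ACweak}, and in~\eqref{eq:weaksol}. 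The nonlinear stress $\eps\nabla\varphi\otimes\nabla\varphi$ needs strong $H^1$ convergence of $\varphi_h$. We obtain it from the Allen--Cahn equation: test with $\varphi_h-\varphi$ and use the strong convergence of $\rho_h$. The limit of $\rho_h^m$ is only a measure $\mu_\eps\ge\rho_\eps^m$ by lower semicontinuity, which is why $\mu_\eps$ appears in the definition. The energy inequality~\eqref{eq:energydiss} follows from the De Giorgi interpolation and lower semicontinuity.

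\emph{Almost-minimality~\eqref{eq:almostmin}.} This is the step we expect to be hardest. The selection only gives local minimality at each step, while~\eqref{eq:almostmin} is a global comparison. We would use Sch\"atzle's argument: whenever $\varphi^n$ fails to be a global minimizer of $G_{\rho^{n-1}}$ by more than $\eps/T$, the scheme is forced to jump, which costs energy at least $\delta_\eps$. Such times therefore occupy a set of measure $O(h)$, so pointwise in time one gets $G_{\rho_h}(\varphi_h)\le G_{\rho_h}(\tilde\varphi)+\eps/T$ outside a vanishing set. Integrating in time and passing to the limit gives~\eqref{eq:almostmin}: the left side is lower semicontinuous, and on the right side $\int\tilde\varphi\,\rho_h\to\int\tilde\varphi\,\rho_\eps$ by strong convergence of $\rho_h$ and $\tilde\varphi\in L^1H^1\subset L^1L^6$. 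We expect the delicate points to be calibrating the jump threshold against $\eps$ and checking that minimizers depend stably on $\rho$ in $L^{m}$.
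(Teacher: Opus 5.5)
The outer architecture of your proposal matches the paper: a JKO step with the phase frozen, the Euler--Lagrange equation with the time-shift remainder, and the De Giorgi interpolation. The gap is in the selection of the phase, which is the core of the proof.

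\textbf{The selection rule does not give property (c).} Minimizing $G_{\rho^{n-1}}$ over a ball $B_r(\varphi^{n-1})$ has three problems.
\begin{itemize}
\item In the interior case the jump is bounded only by the fixed radius $r$, not by a quantity that vanishes with $h$. Small jumps summed over $T/h$ steps give no BV bound unless they are controlled by the density increments.
\item Inside the ball your rule is a lowest-energy (Maxwell-type) choice among the local minimizers present. Near a degenerate equilibrium, the reduced derivative $\mathcal J'_{\alpha,z}$ can have several zeros. Two local minimizers at distance less than $r$ can then swap the role of the lower one as $\rho^{n-1}$ fluctuates across their coexistence set. Your scheme may switch between them at every step, with energy drops of the size of the fluctuation. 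The energy budget does not limit the number of such switches, so neither equicontinuity in time nor strong $L^2$ compactness of $\varphi_h$ follows.
\item The uniform drop $\delta_\eps$ in the boundary case is unjustified. A compactness argument would have to exclude, uniformly in $\rho$, limits in which $G_{\bar\rho}$ has two minimizers in the closed ball at distance exactly $r$.
\end{itemize}

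The missing idea is Sch\"atzle's hysteresis rule. After the Lyapunov--Schmidt reduction, $p_n$ is the first zero of $\mathcal J'_{\alpha_n,z_n}$ reached from $p_{n-1}$ in the descent direction, see \eqref{eq:pn-selection}; it is not an energy-minimization rule. Near a time $t_0$, the sign of $\mathcal J'$ on the gap between consecutive zeros $q_i,q_{i+1}$ of $g_0$ is fixed. So each gap can be crossed in only one direction and the index $i_n^h$ is monotone. Together with the properness of $\mathcal A$, this gives strong $H^1$ convergence for a.e.\ time.

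\textbf{The almost-minimality argument fails for the same reason.} Under constrained minimization, nothing forces a jump when $G_{\rho^{n-1}}(\varphi^n)>\inf G_{\rho^{n-1}}+\eps/T$. A local minimizer that has become metastable is followed as long as it stays the lowest point of the ball, possibly for a time interval of positive length. Bounding the number of costly jumps bounds the number of jump steps, not the duration of these intervals. Hence your claim that the bad times have measure $O(h)$ does not follow.

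The paper uses a different mechanism. It keeps $\varphi_n^h$ inside a $\delta_0$-chart around a global minimizer $(\bar\rho,\bar\varphi)\in\mathcal C_M$. By compactness of $\mathcal C_M$, $\delta_0$ can be chosen uniformly so that the gap to $\inf\mathcal F_\rho$ stays below $\kappa=\eps/T$. A restart is triggered when $\rho$ leaves the chart. The $W_2$-H\"older estimate bounds the number of restarts by $1+T/\tau_*$.

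You could make your heuristic an explicit rule: restart from a global minimizer whenever the gap exceeds $\eps/T$. Each such restart costs more than $\eps/T$ of energy, so there are at most $CT/\eps$ of them. This is a legitimate alternative way to count restarts. It still needs a non-oscillating local selection between restarts, which is exactly the missing ingredient above.

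\textbf{A smaller point on the density.} The Rossi--Savar\'e theorem needs an integrand whose sublevels are strongly compact. $\int\rho^m$ only gives weak compactness. You need the spatial bound on $g(\rho^h)$ in $W^{1,s}$, which the paper obtains from \eqref{eq:santaoptgrad}, \eqref{phibound} and \eqref{eq:coercivity}.
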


\begin{remark} 
    Notice that, if we assumed the convergence of the total energy for our minimization scheme~\eqref{eq:ACopt}-\eqref{minpb}, namely
    \begin{align*}
        \int_0^T\mathcal{E}_\eps(\rho_\eps^h,\varphi_\eps^h) \, \dt \to \int_0^T\mathcal{E}_\eps(\rho_\eps,\varphi_\eps) \, \dt\quad \text{as } h \to 0,
    \end{align*}
    or, alternatively, if one could prove that 
    \begin{align*}
          \int_0^T \int_\Omega (\rho_\eps^h)^m  \, \dx \dt \to \int_0^T \int_\Omega (\rho_\eps)^m  \, \dx  \dt \quad \text{as } h \to 0,
    \end{align*}
    then we could identify the defect measure $\mu_\eps$ with $\rho_\eps^m \mathcal{L}^d \llcorner \Omega \otimes \mathcal{L}^1 \llcorner (0,T)$. Indeed, we will establish the convergence of the diffuse interfacial energy and of the coupling term as $h \to 0$ (cf. Proposition~\ref{prop:sel-phase-compactness}), so that the only possible loss in the total energy is due to the density term.
\end{remark}

\begin{remark}
    We observe that any sufficiently smooth triple $(\rho_\eps, \varphi_\eps,\mathbf{u}_\eps)$ satisfying~\eqref{eq:transport}-\eqref{eq:ACweak} with $\mu_\eps = \rho_\eps^m \mathcal{L}^d \llcorner \Omega \otimes \mathcal{L}^1 \llcorner (0,T)$, whose existence is established by the theorem above, is in fact a strong solution to the phase-field model~\eqref{eq:phasefield}. This can be shown via integrations by parts.
\end{remark}

\subsection{Minimization problem and discrete flow}\label{sec:minprob}
Let $\rho_0 \in L^m(\Omega;[0,\infty))$ be an initial datum and let $\varphi_0 \in H^1(\Omega)$ be a minimizer of the functional $\varphi \mapsto \mathcal{E}_{\textnormal{int}, \eps}(\varphi) +\int_{\Omega} \varphi (\lambda-\rho_0) \, \dx $.
Fix $h>0$, our implicit time discretization argument reads as follows.  Set $(\rho_{\varepsilon,0}^h,\varphi_{\varepsilon,0}^h):= (\rho_0, \varphi_0)$, and, for $n \in \mathbb{N} \setminus \{0\}$,  let $\varphi_{\varepsilon, n}^h$ be a solution of
\begin{equation}\label{eq:ACopt}
   -  \eps\Delta \varphi_{\varepsilon, n}^h + \frac{1}{\eps} F^\prime(\varphi_{\varepsilon, n}^h) + \lambda-\rho_{\varepsilon, n-1}^h= {0}
   \quad  \text{ distributionally in } \Omega,
\end{equation}
endowed with Neumann boundary condition, satisfying
\begin{equation}\label{eq:ModicaMortoladecr}
   \mathcal{E}_{\textnormal{int}, \eps}(\varphi_{\varepsilon, n}^h) +\int_{\Omega} \varphi_{\varepsilon, n}^h (\lambda-\rho_{\varepsilon, n-1}^h) \, \dx \le \mathcal{E}_{\textnormal{int}, \eps}(\varphi_{\varepsilon, n-1}^h) +\int_{\Omega} \varphi_{\varepsilon, n-1}^h (\lambda-\rho_{\varepsilon, n-1}^h) \, \dx,
\end{equation}
and let $\rho_{\varepsilon, n}^h$ be such that
\begin{equation}\label{minpb}
\rho_{\varepsilon,n}^h \in \arg \min	\Big\{  \frac{1}{2h} W^2_2(\rho, \rho^h_{\varepsilon,n-1}) +     \mathcal{E}_\eps(\rho, \varphi_{\varepsilon,n}^h) : \rho \in \mathcal{K}\Big\},
\end{equation}
where
\[
\mathcal{K}:= \Big \{\rho \in L^1(\Omega): \rho \ge 0, \, \int_{\Omega} \rho \, \dx=1 \Big \}.
\]
We call $(\rho_{\varepsilon,n}^h,\varphi_{\varepsilon,n}^h)_{n \in \mathbb{N}}$ a discrete solution starting from $(\rho_0, \varphi_0)$. Moreover, we define the approximate solution $(\rho^h_{\epsilon}(t,\cdot), \varphi^h_{\epsilon}(t,\cdot))_{t \in [0,\infty)}$ by piecewise constant interpolation, namely
\begin{equation*}
\rho^h_{\epsilon}(t,\cdot):=\rho_{\epsilon,n}^h(\cdot), \quad \varphi^h_{\epsilon}(t,\cdot):=\varphi^h_{\epsilon,n}(\cdot)\quad \text{for every } t \in [nh,(n+1)h).
\end{equation*}

By testing~\eqref{minpb} with $\rho=\rho_{\varepsilon,n-1}^h$ and using~\eqref{eq:ModicaMortoladecr}, we obtain
\begin{equation}\label{eq:energydecr}
    \frac{1}{2h} W_2^2(\rho_{\varepsilon,n}^h,\rho_{\varepsilon,n-1}^h) + \mathcal{E}_\varepsilon(\rho_{\varepsilon,n}^h,\varphi_{\varepsilon,n}^h) \le \mathcal{E}_\varepsilon(\rho_{\varepsilon,n-1}^h,\varphi_{\varepsilon,n-1}^h).
\end{equation}
In particular, the energy $\mathcal{E}_\varepsilon$ is not increasing along the discrete scheme.

\begin{remark}\label{rk:existence}
   We observe that, under the assumption $m \ge 2$, the scheme~\eqref{eq:ACopt}-\eqref{minpb} is well defined. Indeed, the sequence $(\rho^h_{\epsilon,n}, \varphi^h_{\epsilon,n})_{n \in \mathbb{N}}$, defined recursively by   
    \begin{equation*}\label{eq:minvarphi}
   \varphi^h_{\varepsilon,n} \in \arg\min_{\varphi \in H^1(\Omega)} \mathcal{E}_{\textnormal{int},\varepsilon}(\varphi)+\int_{\Omega} \varphi (\lambda-\rho_{\varepsilon,n-1}^h) \, \dx
   \end{equation*}
   and $\rho_{\eps,n}^h$  given by~\eqref{minpb}, satisfies~\eqref{eq:ACopt}-\eqref{minpb}.

   To show existence of such sequence
   for {$m \ge 2$}, we observe that the functional $\mathcal{E}_\eps(\rho,\varphi)$ defined in~\eqref{phasefieldenergy} is bounded from below, coercive and all of its terms are lower semicontinuous. Moreover, at each step $\mathcal{E}_\eps(\rho,\varphi)$ is non-increasing.
   Using Fenchel's inequality with $f(s)=\frac{1}{m-1}s^m$ and $f^*(s) \simeq s^\frac{m}{m-1}$, we obtain
     \[
   \int_{\Omega} \varphi\rho  \, \dx \le \int_{\Omega} \frac{1}{2} \big ( f(\rho)+f^*(\varphi) \big ) \, \dx,
   \]
yielding
   \[
   \frac{\varepsilon}{2} \| \nabla \varphi\|_{L^2(\Omega)}^2 + \frac{1}{\varepsilon}\int_\Omega F(\varphi) \, \dx +\frac{1}{2} \int_\Omega \frac{\rho^m}{m-1} \, \dx - C_m \| \varphi \|^{\frac{m}{m-1}}_{L^{\frac{m}{m-1}}(\Omega)}+ \lambda \int_\Omega \varphi \, \dx \le \mathcal{E}_\eps(\rho,\varphi).
   \]
   The functional $\mathcal{E}_\eps(\rho,\varphi)$ is bounded from below whenever $m > 4/3$, since in this case $1<m/(m-1)<4$ and we can absorb the lower-order terms  into $\frac{1}{\varepsilon}F(\varphi)$. Moreover, for $\eps>0$ small enough, 
   $\mathcal{E}_\eps(\rho,\varphi)$ is bounded also in the case $m = 4/3$. Instead, if we were considering larger values of $\eps>0$, we would have that $\mathcal{E}_\eps(\rho,\varphi)$ is bounded from below whenever $m > 1$ in the case $d=2$, due to an application of the Sobolev inequality. 
   In particular, since for $m=1$ we  consider $f(s)=s \log s$ and in this case $f^*(s) = \exp(s-1)$,
   we cannot deduce the boundedness from below of $\mathcal{E}_\eps(\rho,\varphi)$ for any fixed $\eps>0$, unless we consider a potential $F$ of exponential growth. More in general, we observe that one could consider smaller values of $m$ by assuming higher polynomial growth of the potential.
   In particular, it follows that the functional $\mathcal{E}_\varepsilon(\rho,\varphi)$ is coercive and it holds,  for $\varepsilon>0$ sufficiently small,
   \begin{equation}\label{eq:coercivity}
       \varepsilon\|\nabla \varphi\|_{L^2(\Omega)}^2+\frac{1}{\varepsilon}\int_\Omega F(\varphi) \, \dx+\int_\Omega \frac{\rho^m}{m-1} \, \dx \le  C\mathcal{E}_\varepsilon(\rho,\varphi)+C,
   \end{equation}
   where $C=C(m,\lambda,|\Omega|)>0$.
   Moreover, for $m\ge 2$, one can show that all the terms of $(\rho,\varphi)\mapsto \mathcal{E}_\varepsilon(\rho,\varphi)$ are
   lower semicontinuous with respect to weak $L^m$-convergence in $\rho$ and weak $H^1$-convergence in $\varphi$.
\end{remark}

\begin{remark}\label{rmk:enpos}
For convenience, since $\mathcal{E}_\eps(\rho,\varphi)$ is bounded from below, we may shift the energy~\eqref{phasefieldenergy} by a suitable constant so that, for every $\varphi$,
$
\inf_{\rho\in\mathcal K}\mathcal{E}_\eps(\rho,\varphi)\geq 0.
$
Clearly, the addition of the constant does not affect the dynamics.
\end{remark}

Using the stationary Allen-Cahn equation~\eqref{eq:ACopt}, we obtain improved regularity for the phase field $\varphi_\varepsilon$ in the following lemma.
\begin{lemma}[Regularity of $\varphi_n^h$]\label{lemma:regularity}
     For any $h>0$ and every $n=1,2,\ldots$, 
     it holds that $\varphi_n^h \in H^2(\Omega)$ and $\nabla \varphi_n^h$ is uniformly bounded in $L^{q}(\Omega)$, for some $q > 2$, by a constant only depending on $\|\varphi_n^h\|_{H^1(\Omega)}$ and $\|\rho^h_{n-1}\|_{L^m(\Omega)}$. 
\end{lemma}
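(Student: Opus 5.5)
We view \eqref{eq:ACopt} as a linear Neumann problem for $\varphi_n^h$ with a right-hand side that we control, and then apply standard elliptic regularity. Concretely, we rewrite \eqref{eq:ACopt} as
\begin{equation*}
-\eps\Delta \varphi_n^h + \eps\varphi_n^h = g_n := \eps \varphi_n^h - \frac1\eps F'(\varphi_n^h) - \lambda + \rho_{n-1}^h \quad \text{in } \Omega, \qquad \nabla\varphi_n^h\cdot\nu_\Omega = 0 \text{ on } \partial\Omega.
\end{equation*}
The zeroth-order term is added so that the operator is coercive on $H^1(\Omega)$ and no compatibility condition is needed. Since $\Omega$ is bounded and smooth, $H^2$-regularity for the Neumann problem gives $\|\varphi_n^h\|_{H^2(\Omega)}\le C\|g_n\|_{L^2(\Omega)}$ as soon as $g_n\in L^2(\Omega)$.

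We bound $g_n$ term by term. Since $F'(s)=\frac12 s(s-1)(2s-1)$ is a cubic polynomial, $|F'(s)|\le C(1+|s|^3)$. Because $d\in\{2,3\}$, the Sobolev embedding gives $H^1(\Omega)\hookrightarrow L^6(\Omega)$, so
\begin{equation*}
\|F'(\varphi_n^h)\|_{L^2}\le C\big(1+\|\varphi_n^h\|_{L^6}^3\big)\le C\big(1+\|\varphi_n^h\|_{H^1}^3\big).
\end{equation*}
Since $m\ge 2$ and $\Omega$ is bounded, $\|\rho_{n-1}^h\|_{L^2}\le C\|\rho_{n-1}^h\|_{L^m}$. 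The remaining terms of $g_n$ are trivially in $L^2$. Hence $g_n\in L^2(\Omega)$ with a bound depending only on $\eps$, $\lambda$, $|\Omega|$, $\|\varphi_n^h\|_{H^1}$ and $\|\rho_{n-1}^h\|_{L^m}$, and we conclude $\varphi_n^h\in H^2(\Omega)$ with the same dependence.

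The gradient bound then follows by Sobolev embedding applied to $\nabla\varphi_n^h\in H^1(\Omega)$. For $d=3$ we get $\nabla\varphi_n^h\in L^6(\Omega)$. For $d=2$ we get $\nabla\varphi_n^h\in L^q(\Omega)$ for every $q<\infty$. In both cases some $q>2$ works, for instance $q=6$, with the stated dependence of the constant.

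The only delicate point is the first step: checking that the weak (distributional, Neumann) solution in $H^1$ falls within the scope of the $H^2$-regularity theorem. This holds because the equation is linear once $g_n$ is fixed, and the weak solution of a coercive Neumann problem is unique. If one prefers to avoid global $H^2$ theory, an alternative is to apply Meyers-type $W^{1,q}$ estimates directly. The restrictions $d\le 3$ (for the cubic nonlinearity) and $m\ge2$ (for $\rho\in L^2$) are exactly what the argument uses.
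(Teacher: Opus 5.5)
Your proposal is correct and follows essentially the same route as the paper. You get $F'(\varphi_n^h)\in L^2(\Omega)$ from $H^1\hookrightarrow L^6$ for $d\le 3$ and $\rho_{n-1}^h\in L^2(\Omega)$ from $m\ge 2$, then apply global elliptic $H^2$-regularity for the Neumann problem and Sobolev embedding of $\nabla\varphi_n^h\in H^1(\Omega)$; your zeroth-order shift and the explicit $L^2$ bound on the right-hand side only make explicit the dependence of the constant, which the paper leaves implicit.
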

\begin{remark}\label{remarkd} 
Notice that the restrictions $m\geq 2$ and $d\in\{2,3\}$ in the hypotheses of Theorem~\ref{theo:existence} are used to obtain the regularity $\varphi_n^h\in H^2(\Omega)$.  More precisely, $m\geq 2$ ensures that the density $\rho_n^h$ belongs to $L^2(\Omega)$, while the restriction $d\leq 3$ allows us to control the term $F'(\varphi_n^h)$ in $L^2(\Omega)$. 
\end{remark}
\begin{proof}
    Recall that $\rho_{n-1}^h \in L^m(\Omega)$ with $m \ge 2$ and $\varphi_{n}^h \in L^6(\Omega)$ by the embedding $H^1 \hookrightarrow L^6$ which holds if $d=2,\, 3$, in particular $F'(\varphi_{n}^h) \in L^{2}(\Omega)$.
    By elliptic regularity, it follows from~\eqref{eq:ACopt} that $\varphi_{n}^h\in H^2(\Omega) $. 
     In particular, $\nabla \varphi_{n}^h\in H^1(\Omega) $. On the other hand, we have that $H^1(\Omega) \hookrightarrow L^q(\Omega)$, for every $q\ge 1$ if $d=2$ and $q=6$ if $d=3$. 
\end{proof}

Since $\eps>0$ is fixed throughout this section, we will omit the index $\eps$ from the notation in what follows.

Computing the variations of $\mathcal{E}$ in  $\rho^h_{n}$, one obtains the following optimality conditions (see~\cite[Proposition 8.7]{Santabook}).

\begin{lemma}[Euler-Lagrange equations]\label{phivarphi}
    For any $h>0$ and every $n \ge 1$, there exists a 
    constant $c_{n}^h\in \mathbb{R}$ such that the optimal pair $(\rho^h_{n},\varphi_{n}^h)$ satisfies
        \begin{align}\label{eq:santaopt}
         \tfrac{m}{m-1}(\rho^h_{n})^{m-1} - \varphi_{n}^h + \phi_{n}^h&= c_{n}^h  \quad \text{ a.e. in } \supp(\rho_{n}^h), 
        \\
      \tfrac{m}{m-1}\nabla (\rho^h_{n})^{m-1} - \nabla \varphi_{n}^h +\nabla \phi_{n}^h&= 0 \quad \text{ a.e. in } \supp(\rho_{n}^h), \label{eq:santaoptgrad}
    \end{align}
    where $\phi^h_{n}$ is the  Kantorovich potential from $\rho^h_{n}$ to $\rho_{n-1}^h$, i.e. $(\operatorname{Id}-h \nabla \phi^h_{n})_\# \rho^h_{n}=\rho^h_{n-1}.$ 
\end{lemma}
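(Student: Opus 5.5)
We follow the standard first-variation argument for JKO-type minimization problems, as in~\cite[Proposition 8.7]{Santabook}, applied to the functional
\[
\rho \mapsto \frac{1}{2h}W_2^2(\rho,\rho^h_{n-1}) + \int_\Omega \Big[\tfrac{1}{m-1}\rho^m - \varphi_n^h\rho\Big]\,\dx
\]
on $\mathcal{K}$. The terms of $\mathcal{E}$ that do not depend on $\rho$ are dropped: the interfacial energy of $\varphi_n^h$ and $\lambda\int_\Omega\varphi_n^h\,\dx$. Here $\varphi_n^h$ is fixed, and by Lemma~\ref{lemma:regularity} it lies in $H^2(\Omega)$, with $\nabla\varphi_n^h\in L^q$ for some $q>2$.

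\textbf{Variation.} Fix a competitor $\tilde\rho\in\mathcal{K}\cap L^m(\Omega)$ and set $\rho_\delta:=(1-\delta)\rho_n^h+\delta\tilde\rho$ for $\delta\in(0,1)$. We compute the one-sided derivative at $\delta=0$ of each term.
\begin{itemize}
\item For the Wasserstein term, by~\cite[Proposition 7.17]{Santabook} the first variation of $\frac12 W_2^2(\cdot,\rho_{n-1}^h)$ at $\rho^h_n$ is a Kantorovich potential. We normalize it as $h\phi_n^h$, so that the optimal map is $\operatorname{Id}-h\nabla\phi_n^h$. Uniqueness of the potential up to constants requires a positivity condition; this is handled as in Santambrogio, e.g.\ by a small entropic perturbation or by using only the superdifferential inequality.
\item For the power term, convexity and $\rho_n^h,\tilde\rho\in L^m$ allow differentiation under the integral by monotone convergence. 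Its derivative is $\int \tfrac{m}{m-1}(\rho_n^h)^{m-1}(\tilde\rho-\rho_n^h)$.
\item The coupling term is linear, with derivative $-\int\varphi_n^h(\tilde\rho-\rho_n^h)$.
\end{itemize}
Minimality then gives
\[
\int_\Omega \Big(\tfrac{m}{m-1}(\rho_n^h)^{m-1}-\varphi_n^h+\phi_n^h\Big)(\tilde\rho-\rho_n^h)\,\dx\ge 0\quad\text{for all such }\tilde\rho.
\]

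\textbf{From the inequality to the pointwise conditions.} Set $g:=\tfrac{m}{m-1}(\rho_n^h)^{m-1}-\varphi_n^h+\phi_n^h$ and $c_n^h:=\operatorname{ess\,inf} g$. The standard argument, concentrating $\tilde\rho$ near points where $g$ is close to its infimum, yields $g\ge c_n^h$ a.e.\ and $g=c_n^h$ a.e.\ on $\{\rho_n^h>0\}$. This is~\eqref{eq:santaopt}.

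\textbf{Gradient identity.} To get~\eqref{eq:santaoptgrad}, we need $g$ to be Sobolev so that its gradient vanishes a.e.\ on the level set where it is constant.
\begin{itemize}
\item The potential $\phi_n^h$ is Lipschitz, since it is $c$-concave on a bounded domain.
\item The phase field $\varphi_n^h$ is in $H^2$ by Lemma~\ref{lemma:regularity}.
\item Consequently, on $\{\rho_n^h>0\}$ we have $\tfrac{m}{m-1}(\rho_n^h)^{m-1}=(c_n^h+\varphi_n^h-\phi_n^h)_+$. Since the positive part of a $W^{1,2}$ function is $W^{1,2}$, this identity, which extends to all of $\Omega$, shows that $(\rho_n^h)^{m-1}\in W^{1,2}$.
\end{itemize}
Stampacchia's theorem ($\nabla g=0$ a.e.\ on $\{g=c\}$) then gives~\eqref{eq:santaoptgrad} a.e.\ on $\supp\rho_n^h$.

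\textbf{Main obstacle.} The delicate step is the Wasserstein term. One must justify the derivative formula and that a single Kantorovich potential $\phi^h_n$ serves for all competitors $\tilde\rho$, given possible non-uniqueness of potentials when $\rho_n^h$ vanishes on part of $\Omega$. We plan to rely on~\cite[Propositions 7.17, 7.20 and 8.7]{Santabook}. Since $\Omega$ is connected, we can reduce to the case of a positive density by perturbing $\rho_{n-1}^h$ to $(1-\eta)\rho_{n-1}^h+\eta/|\Omega|$ and passing to the limit $\eta\to0$, using the compactness of Lipschitz potentials.
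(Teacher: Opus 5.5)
Your proposal is correct and follows the same route as the paper, which simply invokes the first-variation argument of \cite[Proposition 8.7]{Santabook} for the JKO functional with $\varphi_n^h$ frozen. Your variation, the derivation of the pointwise conditions and the Sobolev/Stampacchia step for \eqref{eq:santaoptgrad} are sound, with two small points to make explicit. First, after you replace $\rho_{n-1}^h$ by $(1-\eta)\rho_{n-1}^h+\eta/|\Omega|$, you need the perturbed minimizers to converge to the given $\rho_n^h$: this follows from uniqueness of the minimizer of \eqref{minpb} for fixed $\varphi_n^h$, by strict convexity of $\rho\mapsto\int_\Omega\rho^m\,\dx$ and convexity of $W_2^2(\cdot,\rho_{n-1}^h)$ along linear interpolations. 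Second, passing from $\{\rho_n^h>0\}$ to $\supp(\rho_n^h)$ uses the continuity of $g=\max\{c_n^h,\phi_n^h-\varphi_n^h\}$.
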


Next we derive the equations satisfied by the discrete flow.
\begin{lemma}[Discrete equation for the velocity field]\label{lemma:EL}
     Given~$h>0$, $\rho_0 \in L^m(\Omega)$ such that $\int_\Omega \rho_0 \,\dx=1$ and $\varphi_0$ a minimizer of the functional $\varphi \mapsto \mathcal{E}_{\textnormal{int}, \eps}(\varphi) +\int_{\Omega} \varphi (\lambda-\rho_0) \,\dx $, let $(\rho_n^h,\varphi^h_n)_{n \in \mathbb{N}}$  be a discrete solution starting from $(\rho_0, \varphi_0)$.
   Then, for any vector field $\xi\in C^1_c(\overline\Omega;\mathbb{R}^d)$ with $\xi \cdot \nu_\Omega =0$ on $\partial \Omega$, we have 
    \begin{align}\label{eq:eulerlagrdis}
   &- \int_{\Omega} \rho^h_{n} \nabla \phi^h_n \cdot \xi \, \mathrm{d} x  \notag  \\
    &= \int_{\Omega} \Big(\frac\eps2  |\nabla \varphi^h_{n}|^2 
+ \frac1\eps F( \varphi^h_{n} ) \Big) \div \xi 
- \eps \nabla \varphi^h_{n} \otimes \nabla \varphi^h_{n} : \nabla \xi \, \dx \\
& \quad - \int_{\Omega } (\rho^h_{n})^m \operatorname{div}\xi \, \mathrm{d} x
  +\lambda \int_{\Omega } \varphi^h_{n}\operatorname{div} \xi \, \mathrm{d} x + R_n^h(\xi) , \notag
\end{align}
where $R_n^h(\xi) :=- \int_{\Omega} (\rho^h_{n} - \rho^h_{n-1})\nabla \varphi^h_n \cdot \xi \, \mathrm{d} x$.
\end{lemma}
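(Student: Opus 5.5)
Starting from the optimality condition \eqref{eq:santaoptgrad} of Lemma~\ref{phivarphi}, I will multiply by $\rho^h_n \xi$ and integrate over $\Omega$. On $\supp(\rho^h_n)$ we have $\rho^h_n\nabla\phi^h_n = \rho^h_n\nabla\varphi^h_n - \tfrac{m}{m-1}\rho^h_n\nabla(\rho^h_n)^{m-1}$. Off the support, both sides vanish a.e. once multiplied by $\rho^h_n$. The pressure term satisfies $\tfrac{m}{m-1}\rho\nabla\rho^{m-1} = \nabla\rho^m$. Justifying this requires $(\rho^h_n)^{m}\in W^{1,1}$, which follows in the usual way from \eqref{eq:santaoptgrad}: $\nabla(\rho_n^h)^{m-1}$ equals $\nabla\varphi_n^h-\nabla\phi^h_n$, which is in $L^2$ on the support since $\phi^h_n$ is Lipschitz and $\varphi^h_n\in H^2$ by Lemma~\ref{lemma:regularity}. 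I will then integrate by parts using $\xi\cdot\nu_\Omega=0$, obtaining
\[
-\int_\Omega \rho^h_n\nabla\phi^h_n\cdot\xi\,\dx = -\int_\Omega (\rho^h_n)^m\div\xi\,\dx - \int_\Omega \rho^h_n\nabla\varphi^h_n\cdot\xi\,\dx .
\]

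Next I will split $\rho^h_n=\rho^h_{n-1}+(\rho^h_n-\rho^h_{n-1})$ in the last integral. The increment part is exactly $R^h_n(\xi)$. For the $\rho^h_{n-1}$ part I will use the Allen--Cahn equation \eqref{eq:ACopt}, which gives $\rho^h_{n-1} = \lambda - \eps\Delta\varphi^h_n + \tfrac1\eps F'(\varphi^h_n)$ a.e. This holds pointwise because $\varphi^h_n\in H^2(\Omega)$ by Lemma~\ref{lemma:regularity}. Then
\[
-\int_\Omega \rho^h_{n-1}\nabla\varphi^h_n\cdot\xi
= -\lambda\int_\Omega\nabla\varphi^h_n\cdot\xi + \eps\int_\Omega\Delta\varphi^h_n\nabla\varphi^h_n\cdot\xi - \frac1\eps\int_\Omega\nabla F(\varphi^h_n)\cdot\xi .
\]
Integrating by parts with $\xi\cdot\nu_\Omega=0$ turns the first term into $\lambda\int\varphi^h_n\div\xi$ and the third into $\tfrac1\eps\int F(\varphi^h_n)\div\xi$.

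For the middle term I will use the standard identity $\Delta\varphi\nabla\varphi = \div(\nabla\varphi\otimes\nabla\varphi) - \tfrac12\nabla|\nabla\varphi|^2$. Integrating against $\xi$ gives $-\int\nabla\varphi\otimes\nabla\varphi:\nabla\xi + \tfrac12\int|\nabla\varphi|^2\div\xi$. The boundary terms are $\int_{\partial\Omega}(\nabla\varphi\cdot\nu_\Omega)(\nabla\varphi\cdot\xi)$, which vanishes by the Neumann condition, and $\int_{\partial\Omega}|\nabla\varphi|^2\xi\cdot\nu_\Omega$, which vanishes by tangency. Collecting all terms yields \eqref{eq:eulerlagrdis}.

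The main obstacle is regularity. First, the stress identity needs $\varphi^h_n\in H^2$ with traces of $\nabla\varphi^h_n$, and the Neumann condition must hold in the trace sense. I will handle this by approximating $\varphi^h_n$ in $H^2$ by smooth functions satisfying the Neumann condition, or by approximating $\xi$ by tangential fields, and passing to the limit; every term is continuous in $H^2$, and $d\le3$ gives $\nabla\varphi\in L^4$. Second, the pressure term needs the product rule for $\rho^m$ near the boundary of the support; here I will invoke \cite[Proposition 8.7]{Santabook} or the corresponding argument in \cite{Otto1}. Finally, $R_n^h(\xi)$ is well defined because $\rho^h_{n},\rho^h_{n-1}\in L^2$ and $\nabla\varphi^h_n\in L^2$.
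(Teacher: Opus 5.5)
Your proposal is correct and follows the paper's proof: multiply \eqref{eq:santaoptgrad} by $\rho^h_n\xi$ and integrate by parts to get the pressure term. Then split $\rho^h_n=\rho^h_{n-1}+(\rho^h_n-\rho^h_{n-1})$ and use the Allen--Cahn equation \eqref{eq:ACopt} to turn $-\int_\Omega\rho^h_{n-1}\nabla\varphi^h_n\cdot\xi\,\dx$ into the stress-tensor and $\lambda$ terms. The only difference is that the paper tests the weak form of \eqref{eq:ACopt} directly with $\nabla\varphi^h_n\cdot\xi\in H^1(\Omega)$, so the Neumann condition enters implicitly and your boundary-trace and approximation discussion is not needed.
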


\begin{proof}[Proof of Lemma~\ref{lemma:EL}]
Let $\xi\in C^1_c(\overline\Omega;\mathbb{R}^d)$ with $\xi \cdot \nu_\Omega =0$ on $\partial \Omega$.
Multiplying~\eqref{eq:santaoptgrad} by $\rho_n^h$ and then testing by $\xi$, we obtain
\begin{equation*}
   \int_\Omega  \frac{m}{m-1} \rho_n^h\nabla (\rho^h_{n})^{m-1}  \cdot  \xi\, \dx- \int_{\Omega} \rho_n^h\nabla \varphi_{n}^h \cdot  \xi \, \dx+\int_{\Omega}\rho_n^h \nabla \phi_{n}^h \cdot \xi \, \dx= 0,
\end{equation*}
from which we deduce, by integrating by parts,
\begin{equation*}
   -\int_{\Omega}\rho_n^h \nabla \phi_{n}^h \cdot \xi \, \dx= -\int_\Omega (\rho^h_{n})^{m}  \operatorname{div} \xi\, \dx- \int_{\Omega} \rho_n^h\nabla \varphi_{n}^h \cdot \xi \, \dx.
\end{equation*}
By Lemma~\ref{lemma:regularity} we can use $\nabla \varphi_{n}^h \cdot \xi \in H^1(\Omega)$ as test function in~\eqref{eq:ACopt} and obtain
\begin{equation*}
      \int_\Omega \rho_{ n-1}^h\nabla \varphi_{n}^h \cdot \xi \, \dx= \int_{\Omega} \eps \nabla \varphi_{ n}^h \cdot \nabla(\nabla \varphi_{n}^h \cdot \xi)  + \frac{1}{\eps} F^\prime(\varphi_{ n}^h) \nabla \varphi_{n}^h \cdot \xi \, \dx  + \int_\Omega \lambda \nabla \varphi_{n}^h \cdot \xi \, \dx.
\end{equation*}
Integrating by parts once again, we deduce
\begin{align*}
      &-\int_\Omega \rho_{n-1}^h\nabla \varphi_{n}^h \cdot \xi \, \dx\\
      &= \int_{\Omega}-\eps \nabla \varphi_{ n}^h \cdot \nabla(\nabla \varphi_{n}^h \cdot \xi)  - \frac{1}{\eps}  F^\prime(\varphi_{ n}^h) \nabla \varphi_{n}^h \cdot \xi \, \dx  + \int_\Omega \lambda  \varphi_{n}^h \operatorname{div} \xi \, \dx\\
      &= \int_{\Omega} \eps \nabla \varphi^h_{n} \cdot \Big( - \nabla \xi \nabla   \varphi^h_{n}  
    - \nabla^2\varphi^h_{n} \xi \Big) 
    - \frac1\eps F'( \varphi^h_{n} ) \nabla \varphi^h_{n} \cdot  \xi \, \dx + \int_\Omega \lambda  \varphi_{n}^h \operatorname{div} \xi \, \dx \\
    &= \int_{\Omega} \Big(\frac\eps2  |\nabla \varphi^h_{n} |^2 
    + \frac1\eps F( \varphi^h_{n} ) \Big) \div \xi 
    - \eps \nabla \varphi^h_{n} \otimes \nabla \varphi^h_{n} : \nabla \xi \, \dx + \int_\Omega \lambda  \varphi_{n}^h \operatorname{div} \xi \, \dx. 
\end{align*}
\end{proof}

\subsection{Compactness}\label{sec:compactness}
In this section, we establish suitable convergence results for $(\rho^h,\varphi^h)_{h >0}$, which will allow us to pass to the limit $h \to 0$.

\subsubsection{Weak compactness}
For the proof of the following result on the weak convergence of $\rho^h$ we refer the reader to~\cite[Lemma 6]{KLM}.
\begin{lemma}
\label{lemma:compactness rho}
    Given~$h>0$, $\rho_0 \in L^m(\Omega)$ such that $\int_\Omega \rho_0 \,\dx=1$ and $\varphi_0$ a minimizer of the functional $\varphi \mapsto \mathcal{E}_{\textnormal{int}, \eps}(\varphi) +\int_{\Omega} \varphi (\lambda-\rho_0) \, \dx $, let $(\rho_n^h,\varphi^h_n)_{n \in \mathbb{N}}$  be a discrete solution starting from $(\rho_0, \varphi_0)$. Then
\begin{equation} \label{eq:discomp}
    W_2(\rho^h(t), \rho^h(s)) \leq \sqrt{2}\mathcal{E}(\rho_0, \varphi_0)^\frac12 (t-s)^\frac12,
\end{equation}
for $t>s \ge0$ with $t-s\ge h$.
Furthermore, for every $T>0$, we have up to a subsequence
\begin{equation}\label{eq:weakcomprho}
\rho^h \rightharpoonup \rho \quad \text{ weakly in } L^{m}(\Omega \times (0,T)),
\end{equation}
there exists $\mu \in \mathcal{M}_b(\Omega \times (0,T))$ such that $(\rho^h)^m \overset{*}{\rightharpoonup} \mu$ weakly-* as measures,
and
\begin{equation}\label{eq:contcomp}
    W_2(\rho(t), \rho(s)) \leq \sqrt{2}\mathcal{E}(\rho_0, \varphi_{0})^\frac12 (t-s)^\frac12,
\end{equation}
for $t>s\ge 0$.
\end{lemma}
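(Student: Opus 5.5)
The plan is to follow the standard minimizing-movement estimates, with the discrete energy inequality~\eqref{eq:energydecr} as the only dynamical input.

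\emph{Step 1: summed dissipation bound.} Sum~\eqref{eq:energydecr} over $n=1,\dots,N$ and telescope. By Remark~\ref{rmk:enpos} the energy is nonnegative, so for every $N$
\begin{equation*}
\sum_{n=1}^N \frac{1}{2h} W_2^2(\rho_n^h,\rho_{n-1}^h) \le \mathcal{E}(\rho_0,\varphi_0).
\end{equation*}

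\emph{Step 2: H\"older estimate~\eqref{eq:discomp}.} Fix $t>s\ge0$ with $t-s\ge h$, and let $t\in[kh,(k+1)h)$, $s\in[jh,(j+1)h)$. Using the triangle inequality and Cauchy--Schwarz,
\begin{equation*}
W_2(\rho^h(t),\rho^h(s)) \le \sum_{n=j+1}^k W_2(\rho_n^h,\rho_{n-1}^h) \le (k-j)^{1/2}\Big(\sum_n W_2^2\Big)^{1/2} \le \big(2h(k-j)\mathcal{E}(\rho_0,\varphi_0)\big)^{1/2}.
\end{equation*}
This gives~\eqref{eq:discomp} up to the comparison of $h(k-j)$ with $t-s$. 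The condition $t-s\ge h$ is what makes this comparison possible, at the price of a harmless factor. Getting the exact constant $\sqrt2$ requires a little care: one can bound $h(k-j)\le t-s+h\le 2(t-s)$ and adjust the constant. Alternatively, one can observe that $k-j\le (t-s)/h+1$ and absorb the extra term using $t-s\ge h$. I expect this bookkeeping to be the only delicate point.

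\emph{Step 3: weak compactness.} By monotonicity of the energy and the coercivity~\eqref{eq:coercivity}, $\sup_t\int_\Omega(\rho^h)^m\,\dx\le C$. Hence $\rho^h$ is bounded in $L^m(\Omega\times(0,T))$, and reflexivity yields a subsequence with~\eqref{eq:weakcomprho}. Likewise, $(\rho^h)^m$ is bounded in $L^1(\Omega\times(0,T))$, so Banach--Alaoglu in $\mathcal{M}_b$ gives a weak-$*$ limit $\mu$ along a further subsequence.

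\emph{Step 4: passing to~\eqref{eq:contcomp}.} On the bounded set $\Omega$, $W_2$ metrizes narrow convergence on $\mathcal{P}(\overline\Omega)$, which is compact. The curves $\rho^h$ satisfy the approximate equicontinuity~\eqref{eq:discomp}, with the additional bound $W_2\le\operatorname{diam}\Omega$ on short intervals. A refined Arzel\`a--Ascoli argument (as in Ambrosio--Gigli--Savar\'e) then extracts a further subsequence with $\rho^h(t)\to\tilde\rho(t)$ narrowly for every $t$.

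Next identify $\tilde\rho$ with the weak $L^m$ limit $\rho$. Testing against $\zeta(x)\theta(t)$ and using dominated convergence in $t$ gives $\tilde\rho(t)=\rho(t)$ for a.e.\ $t$. Finally, fix $t>s$ and let $h\to0$; eventually $t-s\ge h$, so~\eqref{eq:discomp} applies. Lower semicontinuity of $W_2$ under narrow convergence then gives~\eqref{eq:contcomp}, first for a.e.\ $t,s$ and then for all $t>s$ after choosing the continuous representative.
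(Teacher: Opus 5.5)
Your argument is correct and is the standard minimizing-movement proof: telescoped dissipation plus Cauchy--Schwarz, the uniform $L^m$ bound from \eqref{eq:coercivity}, a refined Arzel\`a--Ascoli argument in $(\mathcal{P}(\overline\Omega),W_2)$, and identification of the pointwise and weak limits; the paper gives no proof of its own and defers to \cite[Lemma 6]{KLM}. The one point to fix is the constant: at non-grid times your Step 2 yields $2$ rather than $\sqrt2$ in \eqref{eq:discomp} (and $\sqrt2$ cannot be obtained there from \eqref{eq:energydecr} alone), so in Step 4 use $W_2(\rho^h(t),\rho^h(s))\le\sqrt2\,\mathcal{E}(\rho_0,\varphi_0)^{1/2}(t-s+h)^{1/2}$, valid for all $t>s$, instead of the adjusted version, since letting $h\to0$ in it gives \eqref{eq:contcomp} with the sharp constant $\sqrt2$.
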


The weak convergence properties of $\varphi^h$ follow directly from the energy bound and Lemma~\ref{lemma:regularity} and can be collected in the following statement.
\begin{lemma}
\label{lemma:compactness chi}
       Given~$T>0$,  $\rho_0 \in L^m(\Omega)$ such that $\int_\Omega \rho_0 \,\dx=1$ and $\varphi_0$ a minimizer of the functional $\varphi \mapsto \mathcal{E}_{\textnormal{int}, \eps}(\varphi) +\int_{\Omega} \varphi (\lambda-\rho_0) \, \dx $, let $(\rho_n^h,\varphi^h_n)_{n \in \mathbb{N}}$  be a discrete solution starting from $(\rho_0, \varphi_0)$.
      Then, up to subsequences,
      \begin{equation}\label{eq:weakcompvarphi}
\varphi^h \rightharpoonup \varphi \quad \text{ weakly in } L^4(\Omega \times (0,T)),
\end{equation}
and
\begin{equation}\label{eq:weaknablavarphi}
\nabla \varphi^h \rightharpoonup \nabla \varphi \quad \text{ weakly in } L^q(\Omega \times (0,T)), 
\end{equation}
where the exponent $q$ is given by Lemma~\ref{lemma:regularity}.
\end{lemma}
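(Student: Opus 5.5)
The plan is to derive uniform-in-$h$ bounds from the discrete energy inequality and then apply weak compactness in reflexive spaces. I would first iterate~\eqref{eq:energydecr} to get $\mathcal{E}_\eps(\rho^h_n,\varphi^h_n)\le \mathcal{E}_\eps(\rho_0,\varphi_0)$ for every $n$. Combining this with the coercivity estimate~\eqref{eq:coercivity} yields
\begin{equation*}
\sup_{n}\Big(\eps\|\nabla\varphi^h_n\|_{L^2(\Omega)}^2+\tfrac1\eps\int_\Omega F(\varphi^h_n)\,\dx+\int_\Omega (\rho^h_n)^m\,\dx\Big)\le C\,\mathcal{E}_\eps(\rho_0,\varphi_0)+C,
\end{equation*}
with a constant independent of $h$ and $n$. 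Since $F(s)\ge \tfrac18 s^4 - C$, this gives $\sup_n\|\varphi^h_n\|_{L^4(\Omega)}\le C$. The gradient bound then shows that $\varphi^h_n$ is bounded in $H^1(\Omega)$ uniformly in $n$ and $h$, and $\rho^h_{n-1}$ is bounded in $L^m(\Omega)$ uniformly as well.

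Next I would apply Lemma~\ref{lemma:regularity}. Its constant depends only on $\|\varphi^h_n\|_{H^1(\Omega)}$ and $\|\rho^h_{n-1}\|_{L^m(\Omega)}$, so we obtain $\sup_{n,h}\|\nabla\varphi^h_n\|_{L^q(\Omega)}\le C$ with $q>2$. Integrating the pointwise-in-time bounds over $(0,T)$ shows that $\varphi^h$ is bounded in $L^4(\Omega\times(0,T))$ and $\nabla\varphi^h$ is bounded in $L^q(\Omega\times(0,T))$. By reflexivity and a diagonal argument we can extract a common subsequence (further refining the one from Lemma~\ref{lemma:compactness rho}) such that $\varphi^h\rightharpoonup\varphi$ in $L^4$ and $\nabla\varphi^h\rightharpoonup G$ in $L^q$.

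Finally I would identify $G=\nabla\varphi$. For $\Psi\in C_c^\infty(\Omega\times(0,T);\mathbb{R}^d)$, integration by parts in $x$ at each fixed $t$ gives $\int\!\!\int \nabla\varphi^h\cdot\Psi=-\int\!\!\int\varphi^h\,\div\Psi$. Passing to the limit with both weak convergences yields $\int\!\!\int G\cdot\Psi=-\int\!\!\int\varphi\,\div\Psi$, so $G=\nabla\varphi$ distributionally in space; in particular $\varphi\in L^2(0,T;H^1(\Omega))$.

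The only delicate point is the uniformity of the $L^q$ constant from Lemma~\ref{lemma:regularity}. That uniformity rests on the $h$-independent bounds from the first step, which hold because we start from $n=1$ and the initial energy is finite.
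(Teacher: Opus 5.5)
Your proposal is correct and follows essentially the same route as the paper: $h$-independent bounds from iterating \eqref{eq:energydecr} together with the coercivity estimate \eqref{eq:coercivity}, then the uniform $L^q$ gradient bound from Lemma~\ref{lemma:regularity}, then weak compactness in $L^4$ and $L^q$. Your explicit identification of the weak limit of $\nabla\varphi^h$ as $\nabla\varphi$ by spatial integration by parts spells out a step the paper leaves implicit, and the initial slab $[0,h)$, where $\varphi^h=\varphi_0$, is harmless because $\varphi_0$, as a minimizer, solves the same Euler--Lagrange equation with forcing $\rho_0$.
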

\begin{proof}
    Let $N=[T/h]+1$. Using~\eqref{eq:energydecr} and the bound~\eqref{eq:coercivity}, we obtain
     \begin{align*}
    \| \varphi^h\|^4_{L^4(\Omega \times (0,T))} &\le h\sum_{n=0}^{N}  \int_\Omega (\varphi_n^h)^4 \, \dx \\
    & \leq  C h\sum_{n=0}^{N}  \Big(\int_\Omega F(\varphi_n^h) \,  \dx + C \Big) 
    \\
    & \leq  C h\sum_{n=0}^{N} \Big( \mathcal{E}(\rho^h_{n}, \varphi_{n}^h)  +C\Big)  \leq C T \Big( \mathcal{E}(\rho_{0}, \varphi_{0})  +C\Big ),
    \end{align*}
    whence the weak convergence~\eqref{eq:weakcompvarphi} follows.
   Moreover, using again~\eqref{eq:energydecr} and~\eqref{eq:coercivity} combined with the uniform bound from Lemma~\ref{lemma:regularity}, we have
    \begin{align*}
    \| \nabla \varphi^h\|^q_{L^q(\Omega \times (0,T))} = h\sum_{n=0}^{N} \int_\Omega (\nabla \varphi_n^h)^q \, \dx 
    \leq C T, 
    \end{align*}
    yielding~\eqref{eq:weaknablavarphi}.
\end{proof}


\subsubsection{Strong compactness of $\rho_n^h$}
By Kantorovich duality we have
\begin{align}\label{eq:kanto}
\frac{1}{2h} W^2_2(\rho_n^h, \rho_{n-1}^h) = \frac{h}{2} \int_{\Omega} |\nabla \phi_n^h|^2 \rho_n^h \, \text{d}x,
\end{align}
where we recall that $\phi_n^h$ is the Kantorovich potential. 
Moreover, we have that
\begin{equation*}
\begin{split}
    \frac{h}{2} \sum_{n=0}^{N} \int_{\Omega} |\nabla \phi_n^h|^2 \rho_n^h \, \text{d}x
    \le \mathcal{E}(\rho_{0},\varphi_{0})-\mathcal{E}(\rho_{N}^h,\varphi_{N}^h),
\end{split}
\end{equation*}
whence, letting $T=Nh$,
\begin{equation}\label{phibound}
    \frac{1}{2}\int_0^T \int_{\Omega} |\nabla \phi^h|^2 \rho^h \, \text{d}x \text{d}t \le \mathcal{E}(\rho_{0},\varphi_0).
\end{equation}

Similarly as in~\cite[Proposition 8]{KLM} we obtain the following strong convergence.
\begin{proposition}\label{prop:strongrho}
    Given~$T>0$, $\rho_0 \in L^m(\Omega)$ such that $\int_\Omega \rho_0 \,\dx=1$ and $\varphi_0$ a minimizer of the functional $\varphi \mapsto \mathcal{E}_{\textnormal{int}, \eps}(\varphi) +\int_{\Omega} \varphi (\lambda-\rho_0) \, \dx $, let $(\rho_n^h,\varphi^h_n)_{n \in \mathbb{N}}$  be a discrete solution starting from $(\rho_0, \varphi_0)$. Then up to a subsequence
    \begin{align} \label{eq:strongcomprhop}
    &\rho^h \rightarrow \rho \quad \text{ strongly in } L^p(\Omega \times (0,T)), \quad \text{ for } 1\le p<m.
    \end{align}
\end{proposition}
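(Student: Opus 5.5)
The plan is to apply the Rossi--Savar\'e compactness-in-measure criterion recalled in Appendix~\ref{appendixA}. We take $X=L^1(\Omega)$ restricted to $\mathcal K$, use the Wasserstein distance $W_2$ as the weak ``pseudo-distance'' controlling time oscillations, and choose a normal coercive integrand that is bounded in time-integrated form. Two ingredients are needed. The first is a spatial-regularity functional $\mathcal F$ with compact sublevels in $L^1$ and $\int_0^T \mathcal F(\rho^h(t))\,\dt\le C$. The second is the time-equicontinuity in $W_2$ from~\eqref{eq:discomp}, which gives $\lim_{\tau\downarrow0}\sup_h\int_0^{T-\tau}W_2(\rho^h(t+\tau),\rho^h(t))\,\dt=0$, since the bound holds for $\tau\ge h$ and for $\tau<h$ the integral is trivially controlled by $(\tau/h)\cdot h$ times a constant. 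The criterion then yields convergence in measure of $\rho^h$ in $\Omega\times(0,T)$, possibly after extracting a subsequence. Together with the uniform $L^\infty(0,T;L^m)$ bound from~\eqref{eq:energydecr} and~\eqref{eq:coercivity}, Vitali's theorem upgrades this to strong $L^p$ convergence for every $p<m$. The limit coincides with the weak limit $\rho$ of Lemma~\ref{lemma:compactness rho}.

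For the spatial regularity, following \cite[Proposition 8]{KLM}, I would use the gradient form of the Euler--Lagrange equation~\eqref{eq:santaoptgrad}. On the support it gives $\tfrac{m}{m-1}\nabla(\rho_n^h)^{m-1}=\nabla\varphi_n^h-\nabla\phi_n^h$. Multiplying by $\rho_n^h$ and rewriting, we get $\nabla (\rho_n^h)^m=\rho_n^h\nabla\varphi_n^h-\rho_n^h\nabla\phi_n^h$, which holds on all of $\Omega$ by the standard argument that $(\rho_n^h)^m\in W^{1,1}$ as in \cite[Proposition 8.7]{Santabook}. We then estimate in $L^1$. The Cauchy--Schwarz inequality gives $\|\rho\nabla\phi\|_{L^1}\le (\int\rho)^{1/2}(\int\rho|\nabla\phi|^2)^{1/2}$, which by~\eqref{phibound} is square-integrable in time. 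For the other term, $\|\rho\nabla\varphi\|_{L^1}\le\|\rho\|_{L^2}\|\nabla\varphi\|_{L^2}$ is bounded uniformly in time, using $m\ge2$ and the energy bound. Hence $\int_0^T\|\nabla(\rho^h)^m\|_{L^1}\,\dt\le C(1+T)$. We therefore take $\mathcal F(\rho)=\|(\rho)^m\|_{W^{1,1}(\Omega)}$ on $\mathcal K$, with value $+\infty$ otherwise. Its sublevels are compact in $L^1$: $(\rho)^m$ is precompact in $L^1$ by Rellich, hence precompact a.e. along subsequences, and the $L^m$ bound gives equi-integrability of $\rho$. Lower semicontinuity with respect to $L^1$ follows from lower semicontinuity of the $BV$ seminorm.

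The main obstacle is justifying the identity for $\nabla(\rho_n^h)^m$ globally, since~\eqref{eq:santaoptgrad} only holds on $\supp\rho_n^h$. The boundary of the support, and the regularity of $\phi_n^h$ there, require care. I would first try the argument of \cite{Santabook}. That argument shows that $\tfrac{m}{m-1}(\rho_n^h)^{m-1}=(c_n^h+\varphi_n^h-\phi_n^h)_+$ a.e. The right-hand side is Lipschitz on bounded $\Omega$ because $\phi_n^h$ is Lipschitz as a Kantorovich potential and $\varphi_n^h\in W^{1,q}$ by Lemma~\ref{lemma:regularity}. So $(\rho_n^h)^{m-1}\in W^{1,q}$, and the chain rule for $s\mapsto s^{m/(m-1)}$ gives the identity everywhere, with the gradient vanishing a.e. off the support. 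The remaining verification of the hypotheses of the Appendix~\ref{appendixA} theorem, such as measurability and tightness, is routine.
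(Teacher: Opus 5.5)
Your proposal is correct, and its architecture matches the paper's: Theorem~\ref{theo:RossiSava} with $d=W_2$ and the modulus~\eqref{eq:discomp}, a spatial-regularity integrand built from the gradient Euler--Lagrange equation~\eqref{eq:santaoptgrad} together with~\eqref{phibound} and the energy bound, and an interpolation/Vitali step at the end.

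The only real difference is which nonlinear function of $\rho$ you control in a Sobolev norm.
\begin{itemize}
\item The paper uses the two-regime function $g(z)=z^m$ for $z<1$, $g(z)=z^{m-1}$ for $z\ge1$. It bounds $\nabla(g\circ\rho_n^h)$ in $L^s$ for some $s\in(1,\min\{\frac{m}{m-1},\frac{2m}{m+1}\}]$ via $|\nabla g(\rho)|\lesssim\rho^{1/2}|\nabla f'(\rho)|$ and Hölder/Young. Because $s>1$, lower semicontinuity and closedness of the sublevels of $v\mapsto\int v^m+\|g(v)\|_{W^{1,s}}$ follow from reflexivity.
\item You control $\nabla(\rho_n^h)^m=\rho_n^h(\nabla\varphi_n^h-\nabla\phi_n^h)$ in $L^1$. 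Unit mass and Cauchy--Schwarz handle the transport term, and $m\ge2$ handles the phase term. This is shorter.
\item Your representation $\tfrac{m}{m-1}(\rho_n^h)^{m-1}=(c_n^h+\varphi_n^h-\phi_n^h)_+$ also justifies the identity across $\partial\,\supp\rho_n^h$, a point the paper leaves implicit.
\end{itemize}

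Working at the $L^1$ level has one cost you must pay. Take $\mathcal F(\rho)=\int_\Omega\rho^m\,\dx+|D(\rho^m)|(\Omega)$, not the $W^{1,1}$ norm. Sublevels of the $W^{1,1}$ norm are not closed in $L^1$, since $L^1$-limits may be merely $BV$ (think of smoothed step functions). So, as literally defined, your integrand is neither lower semicontinuous nor has compact sublevels. With the total variation in place of the $W^{1,1}$ seminorm, your Rellich plus a.e. plus equi-integrability argument goes through unchanged.

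A few minor corrections:
\begin{itemize}
\item $(c_n^h+\varphi_n^h-\phi_n^h)_+$ lies in $W^{1,q}\cap L^\infty$, but it is not Lipschitz, since $\varphi_n^h$ is only $W^{1,q}$. $W^{1,q}\cap L^\infty$ is all your chain-rule step needs.
\item For $\tau<h$, the correct estimate is as follows. The times where $\rho^h(t+\tau)\neq\rho^h(t)$ form a set of measure $\lesssim T\tau/h$. On that set $W_2\le C\sqrt h$ by~\eqref{eq:energydecr}. This gives a bound $\lesssim T\tau/\sqrt h\le T\sqrt\tau$. The diameter bound alone would not suffice.
\item On $[0,h)$ one has $\rho^h=\rho_0$, for which $\mathcal F$ may be $+\infty$. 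Apply the tightness bound to the interpolant shifted by one step; this does not affect convergence in measure. The paper's argument shares this issue.
\end{itemize}
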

\begin{proof} 
We define an auxiliary function $g: (0, + \infty] \rightarrow (0, + \infty]$ as
\begin{align*}
    g(z) := \begin{cases}
        z^{m}, & z<1,\\
        z^{m-1} , & z\ge 1,\\
        + \infty, & z=+\infty,
    \end{cases}
\end{align*}
which is continuous and strictly increasing, hence invertible.

Consider $g\circ \rho^h_n$, in view of~\eqref{eq:energydecr} and~\eqref{eq:coercivity}, we deduce that
\[
\int_\Omega {|g (\rho^h_n)|^s} \, \dx \leq C\mathcal{E}(\rho_{0},\chi_{E_{0}})+C,
\]
where $C>0$ is a uniform constant which may vary from line to line and \[s \in \Big (1, \min \Big\{\frac{m}{m-1}, \frac{2m}{m+1} \Big\} \Big ].\]
 To estimate the $L^s$-norm of $\nabla (g \circ \rho_n^h)$, we compute
\begin{align*}
   \int_\Omega |\nabla (g \circ  \rho_n^h) |^s \, \dx
    &\leq  C \int_{\Omega} {(\rho_n^h)}^{\frac{s}{2}}|\nabla   f'(\rho_n^h) |^s\, \dx\\
    &\le C \int_{\Omega} ({\rho_n^h})^{\frac{s}{2}}|\nabla   \varphi_n^h |^s\, \dx+C \int_{\Omega} ({\rho_n^h})^{\frac{s}{2}}|\nabla   \phi_n^h |^s\, \dx,
\end{align*}
where we used~\eqref{eq:santaoptgrad}. 
Notice that the second term on the right-hand side is bounded due to~\eqref{phibound}, whereas the first one can be bounded using Young's inequality with exponents $2m/s$ and $2m/(2m-s)$ together with~\eqref{eq:coercivity}.
Therefore, we obtain 
\begin{equation}
    \int_0^T \|(g \circ \rho^h)(\cdot,t)\|_{W^{1,s}(\Omega)}\, \dt  \le C.
\end{equation}
We introduce the lower semicontinuous functional $\mathcal{F}: L^s(\Omega) \to [0,+\infty]$ defined as
        \[
    \mathcal{F}(v) := \int_\Omega \frac{v^m}{m-1} \, \dx + \|g(v)\|_{W^{1,s}(\Omega)}.
    \]
We observe that, for every $C>0$, the set $\mathcal{V}:=\{
v \in L^s(\Omega): \mathcal{F}(v) \le C\} 
$
is compactly embedded in $L^s(\Omega)$ under the strong topology. To prove this, consider a sequence $(v_n)_n \subset \mathcal{V}$ and notice that, up to a subsequence, $g(v_n)$ converges to some $ w$ strongly in $ L^s(\Omega)$ and almost everywhere in $\Omega$. The strict monotonicity of $g$ guarantees its invertibility, implying
$
v_n = g^{-1}(g(v_n)) \to g^{-1}(w)$ a.e. in $\Omega$.
 The strong convergence in $L^s(\Omega)$ then follows from the dominated convergence theorem.
Combining this with~\eqref{eq:discomp}, all the assumptions of Theorem~\ref{theo:RossiSava} are satisfied with $d=W_2$ (Wasserstein distance) and $\mathcal{U}=(\rho^h)_{h>0}$, in particular $\mathcal{U}$ is tight with respect to $\mathcal{F}$. Applying Theorem~\ref{theo:RossiSava}, we obtain that $\mathcal{U}$ is relatively compact in $\mathcal{M}(0,T; L^s(\Omega))$, yielding the strong convergence $\rho^h(\cdot,t) \rightarrow \rho(\cdot,t) $ in $L^s(\Omega)$ for almost all $t \in (0,T)$. Finally, the strong convergence in $L^s(\Omega \times (0,T))$ is a direct consequence of the dominated convergence theorem.
Furthermore, we obtain~\eqref{eq:strongcomprhop} via an interpolation argument. 
\end{proof}

Defining the affine interpolation by
\[
    \hat\rho^h(t)
    :=
    \frac{nh-t}{h}\,\rho_{n-1}^h
    +
    \frac{t-(n-1)h}{h}\,\rho_n^h,
    \qquad
    t\in[(n-1)h,nh],
\]
by the Arzel\'a-Ascoli theorem, we also have
\begin{align}\label{eq:convrhoH-1}
    \hat\rho^h \to \rho
    \qquad\text{strongly in }
    C^0([0,T];(H^1(\Omega))').
\end{align}

\subsubsection{Strong compactness of $\varphi^h_n$ and almost minimality}\label{sec:strongcompvarphi}
Since the phase equilibrium problem~\eqref{eq:ACopt} may admit several solutions, arbitrary choices at successive time steps can lead to oscillatory phenomena between the wells and prevent strong compactness of $\varphi^h_n$. 
Therefore, at each time step, we choose $\varphi^h_n$ by Sch\"{a}tzle's selection procedure (cf.~\cite[Section~2]{Schaetzle}). 
The selection argument consists of five main steps, which we summarize below.
\begin{itemize}
    \item[\textit{(i)}] Starting from a
global minimizer $(\bar\rho,\bar\varphi)$ of the total energy~\eqref{phasefieldenergy}, we describe the local structure of the equilibria of the functional
\begin{equation}\label{eq:Frho}
    \varphi\mapsto
    \mathcal F_\rho(\varphi)
    :=
    \mathcal E_{\operatorname{int},\eps}(\varphi)
    +\int_\Omega \varphi(\lambda-\rho)\,\dx .
\end{equation}
in a neighbourhood of
$(\bar\rho,\bar\varphi)$. We reduce the
equilibrium Allen--Cahn equation with forcing $\rho$ to a
one-dimensional critical-point condition,
which provides the local properties needed
for the selection procedure. 
\item[\textit{(ii)}] The local properties from \textit{(i)} are used to
define the selected phase $\varphi_n^h$ at each discrete step. 
As long as the density remains in the current local neighbourhood, the selection procedure can be continued, 
guaranteeing both
\[
   -\varepsilon \Delta \varphi_n^h +\frac{1}{\varepsilon}F'(\varphi_n^h)+\lambda =\rho_{n-1}^h
\]
and the phase-energy decay
\[
    \mathcal F_{\rho_{n-1}^h}(\varphi_n^h)
    \leq
    \mathcal F_{\rho_{n-1}^h}(\varphi_{n-1}^h).
\]
If the density leaves the current neighbourhood, the procedure is restarted
from a global minimizer of $\mathcal F_{\rho_{n-1}^h}$.

\item[\textit{(iii)}] We show that the number of restarts on $[0,T]$ is bounded uniformly with respect to $h$. 
Consequently, after passing to a subsequence, the restart times converge to finitely many limiting times.

\item[{\textit{(iv)}}] We work between two consecutive limiting restart
times and exploit the one-dimensional structure given by
{\textit{(i)}}. A monotonicity argument for the indices of the
selected critical points, together with the properness of $\mathcal A$, rules out oscillations between different equilibria and yields the strong
convergence~\eqref{eq:strongconvvarphi} (cf. Proposition~\ref{prop:sel-phase-compactness} below).
 
\item[{\textit{(v)}}] The same local construction also yields the phase almost-minimizing property~\eqref{eq:almostmin}.
\end{itemize}

We now present each step in full detail.

\smallskip
\paragraph{\textit{i) Local structure of equilibria.}}
For  $\rho\in L^2(\Omega)$, we consider the functional~\eqref{eq:Frho}.
It follows from~\cite[Theorem~2.4 and Remark~2.6]{CanariusSchaetzle}
that, for every fixed $\rho\in L^2(\Omega)$, the functional
$\mathcal F_\rho$ has only finitely many global minimizers in
$H^1(\Omega)$. In fact, the set of critical points of
$\mathcal F_\rho$ with nonnegative second variation is finite.

We introduce the operator $\mathcal{A}:H^1(\Omega) \to (H^{1}(\Omega))'$ defined by
\begin{align*}
    \mathcal{A}(\varphi) := -\varepsilon \Delta \varphi +\frac{1}{\varepsilon}F'(\varphi)+\lambda
\end{align*}
with homogeneous Neumann boundary condition. The operator $\mathcal{A}$ is proper (see~\cite[Remark 2.2]{CanariusSchaetzle}), i.e., if $\mathcal{A}(\varphi_j) \to g$ strongly in $(H^1(\Omega))'$ as $j \to \infty$,
then up to subsequences $\varphi_j \to \varphi$ strongly in $H^1(\Omega)$ and $\mathcal{A}(\varphi)=g $.
Notice that 
\begin{equation} \label{eq:nullvar}
    \delta \mathcal{F}_{\rho}(\varphi) =0 \iff \mathcal{A}(\varphi) = \rho,
\end{equation}
corresponding to~\eqref{eq:ACopt}.

The bilinear form associated with the second variation of
$\mathcal F_\rho$ at $\varphi$ is
\[
    B_\varphi(v,w)
    :=
    \int_\Omega
    \varepsilon\nabla v\cdot\nabla w
    +\frac1\varepsilon F''(\varphi)vw\,\dx .
\]
After identifying $(H^1(\Omega))'$ with $H^1(\Omega)$ by means of
the Riesz isomorphism, $B_\varphi$ induces a self-adjoint Fredholm
operator on $H^1(\Omega)$.

Let now $\bar\varphi$ be a global minimizer of
$\mathcal F_{\bar\rho}$, for some fixed $\bar\rho\in L^2(\Omega)$.
Then $B_{\bar\varphi}(v,v)\ge0 $ for every $v\in H^1(\Omega)$. Therefore, all eigenvalues are real and nonnegative.
Moreover, from standard elliptic theory
(cf.~\cite[Section 8.12]{GilbargTrudinger}), it follows that the smallest eigenvalue is simple and has a positive eigenfunction $\bar v$ with $\|\bar v\|_{H^1(\Omega)}=1$. We decompose $H^1(\Omega) = Y \oplus \operatorname{span}(\bar v)$, where $Y:= \bar v^\perp$. Notice that the restriction $B_{\bar\varphi} |_{Y}$ is coercive.


We define $\Phi:\mathbb{R} \times Y \times Y \to Y$ by
\[
\Phi(p,y,z):= \pi_{Y}(\mathcal{R}(\mathcal{A}(\bar \varphi + p \bar v + y)) - \mathcal{R}(\bar  \rho) -z),
\]
where $\mathcal{R}:(H^{1}(\Omega))' \to H^1(\Omega)$ is the Riesz isomorphism and $\pi_Y$ is the orthogonal
projection onto $Y$.
Since $\Phi(0,0,0)=0$ and
$
    \partial_y\Phi(0,0,0)
    =
    \pi_Y\mathcal R( D\mathcal{A}(\bar \varphi)\big|_Y)
$
is an isomorphism on $Y$, the implicit function theorem yields a small positive constant $\delta_0=\delta_0(\bar \rho, \bar \varphi)$ and an analytic map
$
    \bar y:
    [-\delta_0,\delta_0]\times
    \overline{U_{\delta_0}^Y(0)}
    \longrightarrow
    \overline{U_{\delta_0}^Y(0)}
$
such that
\[
    \Phi(p,\bar y(p,z),z)=0.
\]
We set
\begin{equation}\label{eq:varphipz}
    \varphi(p,z)
    :=
    \bar\varphi+p\bar v+\bar y(p,z).
\end{equation}
In particular, $
    \bar y(0,0)=0$ and $ \varphi(0,0)=\bar\varphi.$
Defining
\[
\theta(p,z):=\langle\mathcal{R}(\mathcal{A}( \varphi(p,z))) - \mathcal{R}(\bar  \rho) -z,  \bar v \rangle_{H^1(\Omega)},
\]
we have
\[
\mathcal{R}(\mathcal{A}( \varphi(p,z))) = \mathcal{R}(\bar  \rho)  +z +\theta(p,z) \bar v .
\]

For $(\alpha, z) \in [-\delta_0,\delta_0]\times \overline{U_{\delta_0}^Y(0)},$ let $\rho \in L^2(\Omega)$ be such that $\mathcal{R}(\rho-\bar \rho)= \alpha \bar v + z$. We define
\begin{align}\label{eq:defJ}
    \mathcal{J}_{\alpha,z}(p):= \mathcal{F}_{\rho}(\varphi(p,z)).
\end{align}

Observe that
\begin{equation}
\label{eq:reduced-derivative}
\mathcal{J}'_{\alpha,z}(p)= \theta(p,z)-\alpha,
\end{equation}
in particular
\begin{equation}\label{eq:reduced-derivative2}
    \mathcal J'_{\alpha,z}(p)=0
    \quad\Longleftrightarrow\quad
    \mathcal A(\varphi(p,z))=\rho.
\end{equation}
One can show that there exists $\delta_1, c>0$ and $\delta_2 \in (0,\delta_1/4)$ such that, up to taking $\delta_0$ smaller, the real analytic function $\mathcal{J}'_{\alpha,z}(p)$ is not identically zero in $[-\delta_1,\delta_1]$ and satisfies
\begin{align}
    &\mathcal{J}'_{\alpha,z}(p) \le -c \quad \text{ for } p \in [-\delta_1,-\delta_1+2\delta_2], \label{eq:left-sign}\\
    &\mathcal{J}'_{\alpha,z}(p) \ge c \quad \text{ for } p \in [\delta_1-2\delta_2,\delta_1].\label{eq:right-sign}
\end{align}

Finally, after possibly taking $\delta_0$ smaller, the
coercivity of $B_{\bar\varphi} |_{Y}$ yields the following property: if $w \in Y$ and $(p,z) \in [-\delta_0, \delta_0] \times \overline{U_{\delta_0}^Y(0)}$, then
\begin{equation}\label{eq:transverse-minimality}
    \mathcal F_\rho(\varphi(p,z))
    \le
    \mathcal F_\rho(\varphi(p,z)+w).
\end{equation}
Indeed, the first variation of $\mathcal F_\rho$ at
$\varphi(p,z)$ vanishes in every direction of $Y$, while the second
variation remains positive definite on $Y$.

Notice that the construction above applies both when the operator $B_{\bar \varphi}$ is degenerate and when it is nondegenerate. If the smallest eigenvalue of $B_{\bar \varphi}$ is strictly
positive, the equilibrium is locally unique by the implicit
function theorem. The following selection mechanism becomes relevant only
when the smallest eigenvalue vanishes.

\smallskip
\paragraph{\textit{ii) Selection mechanism.}}
Let $\rho_0\in L^m(\Omega)$ and assume that $\varphi_0$ is a global
minimizer of $\mathcal F_{\rho_0}$. We apply the construction in
$(i)$ with $\bar\rho=\rho_0$ and $\bar\varphi=\varphi_0$.
In particular, we have $ \varphi_0=\varphi(p_0,z_0)$ with $p_0=0$ and $z_0=0$.

We first set $\varphi_1^h = \varphi_0$.
Then $\varphi_1^h$ satisfies~\eqref{eq:nullvar} with $\rho=\rho_0$
and, trivially,
\begin{equation}\label{eq:first-phase-descent}
    \mathcal F_{\rho_0}(\varphi_1^h)
    \le
    \mathcal F_{\rho_0}(\varphi_0).
\end{equation}
We also have $\varphi_{1}^h = \varphi(p_{1}, z_{1})$
with $p_1=0$ and $z_1=0$.
Having fixed $\varphi_1^h$, we let $\rho_1^h$ be a minimizer of
\eqref{minpb} with $\varphi=\varphi_1^h$.

We next construct $\varphi_2^h$. By interpolation,~\eqref{eq:discomp} and the
uniform $L^2$ bound on the densities yield $\|\rho_1^h-\rho_0\|_{(H^1(\Omega))'}
    \to 0$ as $h\to 0.$
Therefore, for $0<h<h^*$ sufficiently small depending only on the initial data, there exist uniquely
$(\alpha_2, z_2) \in[-\delta_0,\delta_0]\times \overline{U_{\delta_0}^Y(0)}$ such that
\begin{equation}\label{eq:alpha2-z2}
    \mathcal R(\rho_1^h-\rho_0)
    =
    \alpha_2\bar v+z_2.
\end{equation}
Recall~\eqref{eq:defJ}, which in this case reads as
\[
    \mathcal J_{\alpha_2,z_2}(p)
    =
    \mathcal F_{\rho_1^h}(\varphi(p,z_2)).
\]
Starting from $p_1=0$, we define
\begin{equation}\label{eq:p2-selection}
    p_2:=
    \begin{cases}
        \displaystyle
        \inf\left\{
        p\in[p_1,\delta_1]:
        \mathcal J'_{\alpha_2,z_2}(p)>0
        \right\},
        &
        \mathcal J'_{\alpha_2,z_2}(p_1)<0,
        \\[1.2ex]
        p_1,
        &
        \mathcal J'_{\alpha_2,z_2}(p_1)=0,
        \\[1.2ex]
        \displaystyle
        \sup\left\{
        p\in[-\delta_1,p_1]:
        \mathcal J'_{\alpha_2,z_2}(p)<0
        \right\},
        &
        \mathcal J'_{\alpha_2,z_2}(p_1)>0.
    \end{cases}
\end{equation}
The sign conditions
\eqref{eq:left-sign}--\eqref{eq:right-sign} ensure that the sets in
\eqref{eq:p2-selection} are nonempty. By continuity,
\[
    \mathcal J'_{\alpha_2,z_2}(p_2)=0.
\]
We then set
\begin{equation}\label{eq:phi2-selection}
    \varphi_2^h:=\varphi(p_2,z_2).
\end{equation}
By~\eqref{eq:reduced-derivative2}, $\varphi_2^h$ satisfies
\eqref{eq:nullvar} with $\rho=\rho_1^h$.
Moreover, from
\eqref{eq:transverse-minimality} it follows that
\begin{equation}\label{eq:transverse-descent-step2}
    \mathcal F_{\rho_1^h}(\varphi(p_1,z_2))
    \le
    \mathcal F_{\rho_1^h}(\varphi(p_1,z_1))
    =
    \mathcal F_{\rho_1^h}(\varphi_1^h).
\end{equation}
On the other hand, by definition~\eqref{eq:p2-selection}, 
$p\mapsto\mathcal J_{\alpha_2,z_2}(p)$ is non-increasing along the
interval joining $p_1$ and $p_2$, in particular
\[
    \mathcal F_{\rho_1^h}(\varphi_2^h)
    =
    \mathcal J_{\alpha_2,z_2}(p_2)
    \le
    \mathcal J_{\alpha_2,z_2}(p_1)
    =
    \mathcal F_{\rho_1^h}(\varphi(p_1,z_2)).
\]
Hence, this together with~\eqref{eq:transverse-descent-step2} give
\begin{equation}\label{eq:second-phase-descent}
    \mathcal F_{\rho_1^h}(\varphi_2^h)
    \le
    \mathcal F_{\rho_1^h}(\varphi_1^h).
\end{equation}
Having selected $\varphi_2^h$, we finally let $\rho_2^h$ be a
minimizer of~\eqref{minpb} with $\varphi=\varphi_2^h$.

For the next steps, we proceed analogously as above. Assume that
\begin{align}
    \varphi_{n-1}^h = \varphi(p_{n-1},z_{n-1})
\end{align}
has already been selected and that $\rho_{n-1}^h$ is a minimizer of
\eqref{minpb} with $\varphi=\varphi_{n-1}^h$.

We now distinguish two cases. 

\emph{Continuation case.} As long as $\rho_{n-1}^h$ admits the following unique decomposition
\begin{equation}\label{eq:decomposition-n}
    \mathcal R(\rho_{n-1}^h-\rho_0)
    =
    \alpha_n\bar v+z_n \quad \text{ for some
$(\alpha_n,z_n) \in [-\delta_0,\delta_0]\times \overline{U_{\delta_0}^Y(0)}$,}
\end{equation}
we define
\begin{equation}\label{eq:pn-selection}
    p_n:=
    \begin{cases}
        \displaystyle
        \inf\left\{
        p\in[p_{n-1},\delta_1]:
        \mathcal J'_{\alpha_n,z_n}(p)>0
        \right\},
        &
        \mathcal J'_{\alpha_n,z_n}(p_{n-1})<0,
        \\[1.2ex]
        p_{n-1},
        &
        \mathcal J'_{\alpha_n,z_n}(p_{n-1})=0,
        \\[1.2ex]
        \displaystyle
        \sup\left\{
        p\in[-\delta_1,p_{n-1}]:
        \mathcal J'_{\alpha_n,z_n}(p)<0
        \right\},
        &
        \mathcal J'_{\alpha_n,z_n}(p_{n-1})>0,
    \end{cases}
\end{equation}
and set
\begin{equation}\label{eq:phin-selection}
    \varphi_n^h:=\varphi(p_n,z_n).
\end{equation}
Once again, by~\eqref{eq:reduced-derivative2} we have~\eqref{eq:nullvar}, i.e.,
\begin{equation} \label{eq:nullvar2}
    \mathcal A(\varphi_n^h)=\rho_{n-1}^h.
\end{equation}
Moreover,~\eqref{eq:transverse-minimality} and~\eqref{eq:pn-selection} yield
\begin{equation}\label{eq:phase-descent-n}
    \mathcal F_{\rho_{n-1}^h}(\varphi_n^h)
    \le
    \mathcal F_{\rho_{n-1}^h}(\varphi_{n-1}^h).
\end{equation}
Finally, having selected $\varphi_n^h$, we let $\rho_n^h$ be a
minimizer of~\eqref{minpb} with $\varphi=\varphi_n^h$.

\emph{Restart case.}
Suppose that~\eqref{eq:decomposition-n} does not hold for $\rho_{n-1}^h$. In this case, we
restart the construction by choosing
\begin{equation}\label{eq:restart-selection}
    \varphi_n^h
    \in
    \operatorname*{argmin}_{\varphi\in H^1(\Omega)}
    \mathcal F_{\rho_{n-1}^h}(\varphi).
\end{equation}
In particular, $\varphi_n^h$ satisfies~\eqref{eq:nullvar} with
$\rho=\rho_{n-1}^h$. Moreover, since $\varphi_{n-1}^h$ is an
admissible competitor in~\eqref{eq:restart-selection},
\begin{equation}\label{eq:restart-phase-descent}
    \mathcal F_{\rho_{n-1}^h}(\varphi_n^h)
    \le
    \mathcal F_{\rho_{n-1}^h}(\varphi_{n-1}^h).
\end{equation}
We then let $\rho_n^h$ be a minimizer of~\eqref{minpb} with
$\varphi=\varphi_n^h$.

We then restart the procedure from $(\rho_{n-1}^h,\varphi_n^h)$, which plays the role of the initial pair $(\rho_0,\varphi_0)$ in the construction above.
In particular, we have
\[
    \varphi_n^h=\varphi(0,0),
\]
so that 
$
    p_n=0
$ and $
    z_n=0.
$

\paragraph{\textit{iii) Finite number of restarts.}}
We next show that the local construction above can be iterated on the whole time interval with a finite number of restarts which is uniform with respect to $h$. Let
\[
    \mathcal K_M
    :=
    \left\{
        \rho\in\mathcal  K: 
        \|\rho\|_{L^2(\Omega)}\le M
    \right\},
\]
where $M>0$, and define the set of possible restart points by
\begin{equation}\label{eq:restart-set}
    \mathcal{C}_M
    :=
    \Big\{
        (\rho,\varphi)\in
        \mathcal K_M\times H^1(\Omega):
        \varphi\in
        \operatorname*{argmin}_{\psi\in H^1(\Omega)}
        \mathcal F_\rho(\psi)
    \Big\}.
\end{equation}

\begin{lemma}
\label{lem:uniform-restarts}
Let $M>0$. Then the set $\mathcal C_M$ is compact in
$(H^1(\Omega))'\times H^1(\Omega)$. Moreover, there exist
$\tau_*>0$ and $h_*>0$ depending only on the initial data such that, for every $h<h_*$, whenever a
restart occurs at step $n_*$, no further restart occurs at any step
$n\ge n_*$ satisfying
\[
    (n-n_*)h\le\tau_*.
\]
Consequently, the number of restarts on $[0,T]$ is bounded
independently of $h$.
\end{lemma}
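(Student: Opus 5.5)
The argument has three steps: compactness of $\mathcal C_M$, a uniform lower bound on the neighbourhood size via a covering argument, and a count of restarts on $[0,T]$.

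\emph{Compactness of $\mathcal C_M$.} Take a sequence $(\rho_j,\varphi_j)\in\mathcal C_M$.

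Since $\|\rho_j\|_{L^2(\Omega)}\le M$ and $L^2(\Omega)\hookrightarrow (H^1(\Omega))'$ compactly, a subsequence satisfies $\rho_j\rightharpoonup\rho$ weakly in $L^2(\Omega)$ and $\rho_j\to\rho$ strongly in $(H^1(\Omega))'$. The limit satisfies $\rho\in\mathcal K_M$, because $\mathcal K_M$ is weakly closed.

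Each $\varphi_j$ is a global minimizer of $\mathcal F_{\rho_j}$, so $\mathcal A(\varphi_j)=\rho_j$ by \eqref{eq:nullvar}. Properness of $\mathcal A$ then gives, up to a further subsequence, $\varphi_j\to\varphi$ strongly in $H^1(\Omega)$ with $\mathcal A(\varphi)=\rho$.

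To see that $\varphi$ is still a global minimizer, fix a competitor $\psi\in H^1(\Omega)$:
\[
\mathcal F_\rho(\varphi)=\lim_j \mathcal F_{\rho_j}(\varphi_j)\le \lim_j \mathcal F_{\rho_j}(\psi)=\mathcal F_\rho(\psi).
\]
The first equality uses strong $H^1$ convergence of $\varphi_j$. Here $F$ has quartic growth and $H^1\hookrightarrow L^4$ for $d\le3$, and the coupling term $\int\varphi_j\rho_j$ converges by pairing $(H^1)'$ against $H^1$. Hence $(\rho,\varphi)\in\mathcal C_M$, and $\mathcal C_M$ is compact.

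\emph{Uniform neighbourhood size.} For each $(\bar\rho,\bar\varphi)\in\mathcal C_M$, step (i) provides $\delta_0(\bar\rho,\bar\varphi)>0$. The goal is a uniform radius $r>0$ such that any restart point $(\bar\rho,\bar\varphi)\in\mathcal C_M$ admits the decomposition \eqref{eq:decomposition-n} for all $\rho$ with $\|\rho-\bar\rho\|_{(H^1)'}\le r$.

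Cover $\mathcal C_M$ by finitely many neighbourhoods of points $(\rho^i,\varphi^i)$ and take the minimum of the associated constants. The delicate point, and the main obstacle, is that a restart point close to $(\rho^i,\varphi^i)$ has its own $\bar v$, its own space $Y$ and its own implicit function. One must check that the constants $\delta_0,\delta_1,\delta_2,c$ from \eqref{eq:left-sign}--\eqref{eq:right-sign} can be chosen locally uniformly.

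I would handle this with a perturbation argument. The implicit function theorem is applied with parameters $(\bar\rho,\bar\varphi)$ included, so the resulting constants depend continuously on the base point. In addition, the first eigenvalue and eigenfunction of $B_{\bar\varphi}$ depend continuously on $\bar\varphi$ in $H^1$, since the smallest eigenvalue is simple.

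If this turns out to be too delicate, there is a fallback in the spirit of Sch\"atzle. Restart from the nearest covering centre $(\rho^i,\varphi^i)$ and use its fixed chart. For $\rho$ close to $\rho^i$, the minimizer near $\varphi^i$ lies in that chart, so the chart centred at $(\rho^i,\varphi^i)$ can serve as the local structure.

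With either version, the decomposition is unique as long as $\|\mathcal R(\rho-\bar\rho)\|_{H^1}=\|\rho-\bar\rho\|_{(H^1)'}\le r$.

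\emph{Time estimate and counting.} By the energy bound \eqref{eq:coercivity} with $m\ge2$, $\|\rho_n^h\|_{L^2(\Omega)}\le M$ for all $n$, with $M$ depending only on $\mathcal E(\rho_0,\varphi_0)$. Hence every restart pair $(\rho^h_{n_*-1},\varphi^h_{n_*})$ lies in $\mathcal C_M$.

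Interpolating between \eqref{eq:discomp} and this uniform $L^2$ bound gives
\[
\|\rho_{n-1}^h-\rho_{n_*-1}^h\|_{(H^1)'}\le C\big((n-n_*)h+h\big)^{\beta}
\]
for some $\beta>0$. The increment $h$ covers the case $t-s<h$ excluded in \eqref{eq:discomp}.

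Choose $\tau_*$ and $h_*$ so that $C(\tau_*+h_*)^\beta\le r$. Then no restart can occur for $(n-n_*)h\le\tau_*$. It follows that the number of restarts on $[0,T]$ is at most $T/\tau_*+1$, uniformly in $h<h_*$.
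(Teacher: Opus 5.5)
Your proposal is correct and follows the paper's proof essentially step by step. It gets compactness of $\mathcal C_M$ from the compact embedding $L^2\hookrightarrow (H^1(\Omega))'$ and the properness of $\mathcal A$, extracts a finite subcover to obtain a uniform radius, and uses the interpolation modulus between $W_2$ and $(H^1(\Omega))'$ to separate consecutive restarts by $\tau_*$. Your fallback of working in the fixed chart of a covering centre is exactly what the paper's Step 3 implicitly does: it only uses that the restart pair lies in some $\mathcal U_{\delta_0}(\bar\rho,\bar\varphi)$ from the finite subcover. So the chart-uniformity issue you flag is handled there in the same way, and your primary continuity-in-the-base-point route is not needed.
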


\begin{proof}
We divide the proof into three steps.

\noindent \emph{Step 1.}
We first prove that $\mathcal{C}_M$ is compact in $(H^{1}(\Omega))'\times H^1(\Omega).$

Let $(\rho_j,\varphi_j)_{j \in \mathbb{N}}\subset\mathcal{C}_M$.
Since $(\rho_j)_{j \in \mathbb{N}}$ is uniformly bounded in $L^2(\Omega)$, up to a subsequence,
there exists $\rho\in \mathcal K_M$ such that
\begin{equation}\label{eq:restart-rho-compact}
    \rho_j\to\rho
    \qquad\text{strongly in }(H^{1}(\Omega))'.
\end{equation}
Since $\varphi_j$ is a global minimizer
of $\mathcal F_{\rho_j}$, we have
\[
    \mathcal A(\varphi_j)=\rho_j,
\]
which combined with 
\eqref{eq:restart-rho-compact} and the properness of $\mathcal A$
yields, up to a subsequence,
\begin{equation}\label{eq:restart-phi-compact}
    \varphi_j\to\varphi
    \qquad\text{strongly in }H^1(\Omega)
\end{equation}
and $
    \mathcal A(\varphi)=\rho. $
Moreover, since 
\[
    \mathcal F_{\rho_j}(\varphi_j)
    \le
    \mathcal F_{\rho_j}(\tilde \varphi), \qquad \text{for every } \tilde \varphi\in H^1(\Omega),
\]
 using
\eqref{eq:restart-rho-compact} and
\eqref{eq:restart-phi-compact}, we obtain
that $ \varphi\in\operatorname*{argmin}\mathcal F_\rho,$
hence
$
    (\rho,\varphi)\in\mathcal{C}_M.
$

\noindent
\textit{Step 2.}
For every
\[
    (\bar\rho,\bar\varphi)\in\mathcal{C}_M,
\]
the local analysis in {\textit{(i)}} provides a constant
$\delta_0=\delta_0(\bar\rho,\bar\varphi)>0$ such that, whenever
\[
    \|\rho-\bar\rho\|_{(H^{1}(\Omega))'}<\delta_0,
\]
the selection procedure can be carried out and yield a function
$\varphi=\varphi(\rho)$  and a $\delta_0$-neighbourhood $\mathcal{U}_{\delta_0}(\bar \rho, \bar \varphi)$ of $(\bar \rho, \bar \varphi)$.

Consider the open cover of $\mathcal{C}_M$ given by $\left\{
        \mathcal{U}_{\delta_0/2}(\bar \rho, \bar \varphi)
        :(\bar\rho,\bar\varphi)\in\mathcal{C}_M
    \right\}$. 
By compactness, we may extract a finite
subcover and, denoting by
$\delta_0^1,\ldots,\delta_0^N$ the corresponding radii, we set $\delta_*:=\min_{1\le j\le N} \delta_0^j/2$.

\noindent
\textit{Step 3.}
It remains to show that $\rho^h_n$ cannot move by
$\delta_*$ in $(H^{1}(\Omega))'$ in an arbitrarily short time.

We first recall~\eqref{eq:discomp}, which can be written as, for $m>n$,
\begin{align*}
    W_2(\rho_n^h,\rho_m^h) \le
    C\sqrt{(m-n)h}.
    \label{eq:atlas-W2-modulus}
\end{align*}
On the other hand, by interpolation, there exists a non-decreasing modulus of continuity
$\omega_M:[0,+\infty)\to[0,+\infty)$, with
$\omega_M(s) \to  0$ as $s\downarrow0$, such that
\begin{equation*}
    \|\rho-\tilde\rho\|_{(H^{1}(\Omega))'}
    \le
    \omega_M\bigl(W_2(\rho,\tilde\rho)\bigr)
    \qquad
    \text{for every }
    \rho,\tilde\rho\in\mathcal K_M.
\end{equation*}
Hence, we deduce
\begin{equation}\label{eq:atlas-Hminus1-modulus}
    \|\rho_n^h-\rho_m^h\|_{(H^{1}(\Omega))'}
    \le
    \omega_M (C\sqrt{(m-n)h}).
\end{equation}

We now choose $\tau_*>0$ such that
$
    \omega_M(C\sqrt{\tau_*})
    <
    \delta_*.
$
Let $h_*>0$ be the constant given by the selection procedure and assume that $h_*\le\tau_*$.
Suppose that a restart occurs at step $n_*$. Since
\[
    (\rho_{n_*-1}^h,\varphi_{n_*}^h)\in\mathcal C_M,
\]
there exists a $\mathcal{U}_{\delta_0}(\bar \rho, \bar \varphi)$ from \textit{Step 2} such that $ (\rho_{n_*-1}^h,\varphi_{n_*}^h) \in \mathcal{U}_{\delta_0}(\bar \rho, \bar \varphi)$.
Fixed $h \le h^*$, for every $n\ge n_*$ such that
$
    (n-n_*)h\le\tau_*,
$
we have, by~\eqref{eq:atlas-Hminus1-modulus},
\[
    \|\rho_{n-1}^h-\rho_{n_*-1}^h\|_{(H^{1}(\Omega))'}
    \le
    \omega_M(C\sqrt{(n-n_*)h})
    <
    \delta_*.
\]
In particular, we observe that no
restart can occur for $n \ge n_*$ such that
$
    (n-n_*)h\le\tau_*.
$
Moreover, if
$
    n_1<n_2<\cdots<n_{N_h}
$
are the restart indices, then
\[
    (n_{k+1}-n_k)h>\tau_*
    \qquad
    \text{for }k=1,\ldots,N_h-1.
\]
Consequently, 
\[
    N_h
    \le
    1+\left[\frac{T}{\tau_*}\right],
\]
which is independent of $h$.
\end{proof}

\paragraph{\textit{iv) Strong convergence of the selected phases.}}

\begin{proposition}
\label{prop:sel-phase-compactness}
Given~$T>0$, $\rho_0 \in L^m(\Omega)$ such that $\int_\Omega \rho_0 \,\dx=1$ and $\varphi_0$ a minimizer of the functional $\varphi \mapsto \mathcal{E}_{\textnormal{int}, \eps}(\varphi) +\int_{\Omega} \varphi (\lambda-\rho_0) \, \dx $, let $(\rho_n^h,\varphi^h_n)_{n \in \mathbb{N}}$  be a discrete solution starting from $(\rho_0, \varphi_0)$ and constructed via the selection procedure above.
Then we have
\begin{equation}\label{eq:strongconvvarphi}
    \varphi^h\to\varphi
    \quad\text{strongly in }
    L^2(0,T;H^1(\Omega))
    \cap L^4(\Omega\times(0,T)).
\end{equation}
In particular,
\begin{equation}\label{eq:convener}
    \int_0^T\!\int_\Omega
    \left(
        \frac{\varepsilon}{2}|\nabla\varphi^h|^2
        +\frac{1}{\varepsilon}F(\varphi^h)
    \right)\dx\dt
    \to
    \int_0^T\!\int_\Omega
    \left(
        \frac{\varepsilon}{2}|\nabla\varphi|^2
        +\frac{1}{\varepsilon}F(\varphi)
    \right)\dx\dt.
\end{equation}
\end{proposition}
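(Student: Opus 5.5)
The plan follows Schätzle's compactness argument, working on each time interval between consecutive limiting restart times. By Lemma~\ref{lem:uniform-restarts}, after passing to a subsequence, the restart indices $n_1<\dots<n_{N_h}$ with $N_h\le 1+[T/\tau_*]$ satisfy $n_k h\to t_k$ for finitely many limiting times $0=t_1\le\dots\le t_N\le T$. Since a finite set of times has measure zero, it suffices to prove strong convergence of $\varphi^h$ in $L^2(t_k+\eta,t_{k+1}-\eta;H^1(\Omega))$ for every small $\eta>0$, and then let $\eta\downarrow 0$. The $L^\infty(0,T;H^1)$ bound from \eqref{eq:coercivity}--\eqref{eq:energydecr}, together with the $L^4$ bound, makes the contribution of the small intervals uniformly small.

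On such an interval and for small $h$, no restart occurs. Moreover, a restart at some index $n_k$ fixes a reference pair $(\bar\rho^h,\bar\varphi^h)\in\mathcal C_M$; by compactness of $\mathcal C_M$ (Step 1 of Lemma~\ref{lem:uniform-restarts}) we may assume it converges to some $(\bar\rho,\bar\varphi)\in\mathcal C_M$. Using the finite subcover from Step 2, we may also assume that the local charts of \textit{(i)} can be taken uniform, i.e.\ centred at one of finitely many fixed pairs. Then for every step $n$ in the interval we have $\varphi_n^h=\varphi(p_n,z_n)$, where $(p_n,z_n)\in[-\delta_1,\delta_1]\times\overline{U^Y_{\delta_0}(0)}$ and $(\alpha_n,z_n)$ is determined by $\mathcal R(\rho^h_{n-1}-\bar\rho)=\alpha_n\bar v+z_n$. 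Because of \eqref{eq:convrhoH-1}, the maps $t\mapsto(\alpha^h(t),z^h(t))$ converge uniformly to the continuous limit $(\alpha(t),z(t))$. Since $\varphi(p,z)$ is analytic, hence Lipschitz in $(p,z)$ with values in $H^1$, strong convergence of $\varphi^h$ reduces to strong $L^1$ convergence in time of the scalar functions $p^h(t)$.

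This scalar compactness is the main obstacle. I would follow Schätzle's monotonicity argument. By \eqref{eq:pn-selection}, $p_n$ is the zero of $\mathcal J'_{\alpha_n,z_n}=\theta(\cdot,z_n)-\alpha_n$ closest to $p_{n-1}$ in the descent direction. Since $\theta(\cdot,z)$ is real analytic and not identically equal to $\alpha$, it has, uniformly for $(\alpha,z)$ in the compact parameter set, a bounded number of zeros in $[-\delta_1,\delta_1]$, counted with multiplicity (Weierstrass preparation/analyticity in all variables). Along the sequence, $p$ can only jump when the zero it sits on disappears, and then it moves monotonically in one direction to the next zero. I would therefore show that the total variation of $p^h$ on the interval is bounded uniformly in $h$. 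Jumps of $p$ occur only where a local extremum of $\theta(\cdot,z_n)-\alpha_n$ crosses the level zero, and between such events $p_n$ follows a continuous branch of zeros, whose displacement is controlled by the variation of $(\alpha^h,z^h)$. The latter has a modulus of continuity by \eqref{eq:atlas-Hminus1-modulus}, but it is not of bounded variation. So instead of a BV bound I would aim at Helly-type compactness in measure. Concretely, I would define an integer ``index'' $k_n$ of the zero occupied by $p_n$, ordered from the left. The selection rule \eqref{eq:pn-selection} forces this index to change monotonically at each jump within a sign pattern, and it takes boundedly many values. This gives a uniform bound on the number of jumps, and between jumps $p^h$ is a uniformly continuous function of $(\alpha^h,z^h)$ up to degenerate zeros. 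Combining the two, an Arzelà--Ascoli/Helly argument yields $p^h\to p$ in $L^1$. Should this fail, the fallback is the properness of $\mathcal A$ applied for a.e.\ $t$: it gives convergence of subsequences pointwise in time, and the monotone index structure would be used only to rule out oscillation between distinct limits.

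Finally, strong $L^2H^1$ convergence combined with the uniform $L^6$ bound (embedding for $d\le3$) and Vitali's theorem gives convergence in $L^4(\Omega\times(0,T))$. The energy convergence \eqref{eq:convener} then follows immediately, since the integrands are continuous with the appropriate growth.
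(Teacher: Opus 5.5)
The gap is in the step you identify as the main obstacle. Your index $k_n$, the position of $p_n$ among the zeros of $\mathcal J'_{\alpha_n,z_n}$ counted from the left, is not a stable label. Zeros of the moving function can appear, disappear, or split off a degenerate zero to the left of $p_n$ while $p_n$ does not move, so $k_n$ changes without any jump.

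More seriously, on a whole interval between restarts there is no reason for any such index to be monotone. The rule \eqref{eq:pn-selection} is a hysteresis rule: $p_n$ tracks a local minimiser of $\mathcal J_{\alpha_n,z_n}$ until it disappears, then moves in the descent direction. If the input $(\alpha^h,z^h)$ later crosses back over the opposite fold, $p$ jumps back. Asymptotic equicontinuity of $(\alpha^h,z^h)$ only bounds the number of oscillations of a fixed amplitude. Near degenerate (cusp-type) configurations, the hysteresis width, and hence the amplitude needed for a back-and-forth jump, can be arbitrarily small. So your uniform bound on the number of jumps does not follow from any estimate you have. Likewise, ``between jumps $p^h$ is a uniformly continuous function of the input up to degenerate zeros'' leaves out exactly the case where the selected branch depends on the history. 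The Arzel\`a--Ascoli/Helly conclusion is therefore unsupported, and your fallback relies on the same non-monotone index. Properness only gives $t$-dependent subsequences, not one subsequence converging for a.e.\ $t$.

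The missing idea is to localise in time against a frozen reference function, not to count jumps globally.
\begin{enumerate}
\item[(1)] Fix $t_0$ and let $g_0:=\mathcal J'_{\alpha_0,z_0}$ be associated with $\rho(t_0)$. It has finitely many zeros $q_1<\dots<q_k$. Enclose them in disjoint intervals $Q_i=(q_i-\eta,q_i+\eta)$ with $|g_0|\ge c_0$ off $\bigcup_i Q_i$.
\item[(2)] By \eqref{eq:convrhoH-1} and continuity of $\rho$ in $(H^1(\Omega))'$, there are a neighbourhood $U$ of $t_0$ and $h_0>0$ such that $\sup_{t\in U}\sup_{|p|\le\delta_1}|\mathcal J'_{\alpha^h(t),z^h(t)}(p)-g_0(p)|<c_0/2$ for $h<h_0$.
\item[(3)] Then every $p_n^h$ with $nh\in U$ lies in some $Q_{i_n^h}$. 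The sign of $\mathcal J'$ on each gap between consecutive $Q_i$ is frozen to that of $g_0$, so each gap can be crossed in only one direction. Hence $n\mapsto i_n^h$ is monotone on $U$ with values in $\{1,\dots,k\}$, by the argument of \cite[Lemma~3.2]{Schaetzle}.
\item[(4)] Helly's theorem fixes this coarse index for small $h$. A diagonal argument over $\eta\downarrow0$ and finite covers of the time interval, as in \cite[Lemma~3.3]{Schaetzle}, combined with pointwise relative compactness from properness of $\mathcal A$, gives one subsequence with $\varphi^h(t)\to\varphi(t)$ in $H^1(\Omega)$ for a.e.\ $t$. Dominated convergence then concludes.
\end{enumerate}
The rest of your plan is fine: the reduction to intervals between limiting restart times, the $L^4$ upgrade, and the deduction of \eqref{eq:convener}.
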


\begin{proof}
By Lemma~\ref{lem:uniform-restarts}, we have that the number $N_h$ of restarts  on
$[0,T]$ is uniformly bounded with respect to $h$. Hence, up to a
subsequence, the corresponding restart times $t_1^h<\ldots<t_{N_h}^h$ converge as $h \to 0$ to finitely many
points $t_1^\ast<\ldots<t^\ast_{N}$ in $[0,T]$. It is therefore enough to prove the assertion on
each open interval between two consecutive limiting restart times.

Fix such an interval and $t_0$ in its interior. For $h>0$ sufficiently
small, the selected phases $\varphi
^h$ are given by the local construction introduced in \textit{(i)}.
We then consider the functional   $p\mapsto \mathcal J_{\alpha_0,z_0}(p) $ for some $(\alpha_0,z_0)$ associated with  $\rho(t_0)$, and set
\[
    g_0(p):=\mathcal J'_{\alpha_0,z_0}(p).
\]
By the properties established in {\textit{(i)}}, $g_0$ is real
analytic and not identically zero in $[-\delta_1, \delta_1]$. Hence it has only finitely many
zeros $-\delta_1<q_1<\ldots<q_k<\delta_1$ for some $ k\in \mathbb{N}$.
We choose $\eta>0$ sufficiently small such
that the intervals
\[
    Q_i:=(q_i-\eta,q_i+\eta),
    \qquad i=1,\ldots,k,
\]
are pairwise disjoint and compactly contained in
$(-\delta_1,\delta_1)$. Since $g_0$ does not vanish outside these
intervals, there exists $c_0>0$ such that
\begin{equation}\label{eq:sel-away-roots}
    |g_0(p)|\ge c_0
    \qquad\text{for every }
    p\in[-\delta_1,\delta_1]\setminus
    \bigcup_{i=1}^k Q_i .
\end{equation}

By the strong convergence in~\eqref{eq:convrhoH-1}, we have
\[
    (\alpha^h,z^h)\to(\alpha,z)
    \qquad\text{uniformly on }[0,T],
\]
where $(\alpha^h(t),z^h(t))$ and $(\alpha(t),z(t))$ are the coordinates associated with $\rho^h(t)$ and $\rho(t)$, respectively.
Moreover, since $\rho$ is continuous in $(H^{1}(\Omega))'$,
\[
    (\alpha(t),z(t))\to(\alpha_0,z_0)
    \qquad\text{as }t\to t_0.
\]
Therefore, by the continuity of $\mathcal J'_{\alpha, z}(p)$ with respect to $(\alpha, z, p)$, we can choose a sufficiently small neighbourhood
$U$ of $t_0$ and then $h>0$ sufficiently small such that
\[
    \sup_{t\in U}\sup_{p\in[-\delta_1,\delta_1]}
    \left|
        \mathcal J'_{\alpha^h(t),z^h(t)}(p)-g_0(p)
    \right|
    <\frac{c_0}{2}.
\]
Since every $p_n^h$ given by~\eqref{eq:pn-selection} satisfies $\mathcal J'_{\alpha_n^h,z_n^h}(p_n^h)=0,$
it follows from~\eqref{eq:sel-away-roots} that, for $h$ sufficiently
small and every $n$ such that $nh\in U$, \[p_n^h\in\bigcup_{i=1}^k Q_i.\]
Since the intervals $Q_i$ are pairwise disjoint, there exists a unique
$i_n^h\in\{1,\ldots,k\}$ such that $p_n^h\in Q_{i_n^h}.$

By the same argument as in~\cite[Lemma~3.2]{Schaetzle},
the sequence $n\mapsto i_n^h$ is monotone as long as $nh\in U$.
On the other hand, given \[
    \rho_-^h(t):=\rho_{n-1}^h
    \qquad\text{for }t\in((n-1)h,nh],
\]
equation~\eqref{eq:nullvar2} reads as 
\[
    \mathcal A(\varphi^h(t))=\rho_-^h(t).
\]
Hence, the convergence of $\rho_-^h(t)$ in $(H^{1}(\Omega))'$, and the
properness of $\mathcal A$ imply that $(\varphi^h(t))_{h>0}$
is relatively compact in $H^1(\Omega)$ for every fixed $t \in U$.
By a diagonal argument,
$\varphi^h(t)$ converges strongly in $H^1(\Omega)$ for every 
$t \in (\mathbb{Q} \cap (0,T))\setminus\{t_1^*,\ldots,t_N^*\}$.
Finally, using~\cite[Lemma~3.3]{Schaetzle}, we can conclude that
\[
    \varphi^h(t)\to\varphi(t)
    \qquad\text{strongly in }H^1(\Omega) \text{  for a.e. } t\in (0,T).
\]
By~\eqref{eq:coercivity}, the dominated
convergence theorem then yields~\eqref{eq:strongconvvarphi}. 
\end{proof}

\paragraph{\textit{v) Phase almost minimizing property.}}
Observe that
\[
    \rho \mapsto
    \inf_{\tilde \varphi\in H^1(\Omega)}\mathcal F_\rho(\tilde \varphi)
\]
is continuous on $\mathcal K_M$
with respect to the $(H^{1}(\Omega))'$-topology. 
Fix a restart point
$(\bar\rho,\bar\varphi)\in\mathcal C_M$. By
the continuity of $\mathcal F_\rho(\varphi)$ with respect to
$(\rho,\varphi)$, for every $\kappa>0$ there exists a sufficiently small $\delta_0$ given by \textit{(i)} such that,
if $(\rho, \varphi) $ belongs to a $\delta_0$-neighborhood of $(\bar\rho,\bar\varphi)$, then
\[
    \mathcal F_\rho(\varphi)
    -
    \inf_{\tilde \varphi\in H^1(\Omega)}\mathcal F_\rho(\tilde \varphi)
    <\kappa.
\]
By the compactness of $\mathcal C_M$, 
we can conclude that, for every $n$,
\begin{equation}\label{eq:sel-kappa-discrete}
    \mathcal F_{\rho_{n-1}^h}(\varphi_n^h)
    \le
    \inf_{\tilde \varphi\in H^1(\Omega)}
    \mathcal F_{\rho_{n-1}^h}(\tilde \varphi)
    +\kappa .
\end{equation}
In particular, for every $\tilde\varphi\in L^1(0,T;H^1(\Omega))$,
\[
    \mathcal F_{\rho_-^h(t)}(\varphi^h(t))
    \le
    \mathcal F_{\rho_-^h(t)}(\tilde\varphi(t))+\kappa
    \qquad\text{for a.e. }t\in(0,T).
\]
Integrating in time and passing to the limit as $h\downarrow0$ by
the strong convergences~\eqref{eq:strongcomprhop} and~\eqref{eq:strongconvvarphi}, we obtain
\[
    \int_0^T\mathcal F_{\rho(t)}(\varphi(t))\, \dt
    \le
    \int_0^T\mathcal F_{\rho(t)}(\tilde\varphi(t))\, \dt
    +T\kappa .
\]
In particular, one may choose $\kappa = \eps/T$, thus obtaining~\eqref{eq:almostmin}.

\subsection{Convergence to the continuous phase-field problem} \label{sec:continuouslimit}

\subsubsection{Construction of the velocity field and transport equation}\label{sc:discretevel}  In order to obtain~\eqref{eq:transport} we argue as in~\cite{KLM}, in particular we refer the reader to~\cite[Section 6.1]{KLM} for more details.
 
Defining $\mathbf{u}^h := \nabla \phi^h$ and $\mathbf{v}^h := \sqrt{\rho^h} \nabla \phi^h $, from~\eqref{phibound} we deduce that (up to a subsequence)
\begin{equation} \label{eq:limitvh}
\sqrt{\rho^h} \mathbf{u}^h = \mathbf{v}^h \rightharpoonup \mathbf{v} \quad \text{weakly in } L^2(\Omega \times (0,T)).
\end{equation}
Moreover, the measure $\sqrt{\rho}\mathbf{v}  \mathcal{L}^d \llcorner \Omega \otimes \mathcal{L}^1 \llcorner (0,T)$  is absolutely continuous with respect to $ \rho  \mathcal{L}^d \llcorner \Omega \otimes \mathcal{L}^1 \llcorner (0,T)$, in particular there exists a vector field $\mathbf{u} \in L^1( \Omega \times (0,T); \rho \,\dx\dt )$ such that $\sqrt{\rho}\mathbf{v}  =  \rho\mathbf{u}  $ in $L^1(\Omega \times (0,T))$ and
\begin{equation}\label{eq:identificationv}
    \mathbf{v}  = \sqrt{\rho}\mathbf{u}  \quad \text{on }\supp \rho.
\end{equation}

Recalling that $(\operatorname{Id}-h \nabla \phi^h_n)_\# \rho^h_n = \rho^h_{n-1}$, for $\eta \in C_c^\infty(\overline\Omega \times [0, +\infty))$ it holds
\begin{align*}
\frac1h \int_\Omega (\rho^h_n - \rho^h_{n-1}) \eta \, \dx = \frac1h \int_\Omega \rho^h_n(x) (\eta(x)- \eta(x-h \nabla \phi^h_n(x)) \, \dx .
\end{align*}
By Taylor's expansion on the right-hand side, we have
\begin{align}\label{eq:equationrho}
\frac1h \int_\Omega (\rho^h_n - \rho^h_{n-1}) \eta \, \dx =  \int_\Omega \rho^h_n(x) \nabla \eta \cdot   \nabla \phi^h_n  \, \dx  + \mathcal{R},
\end{align}
where the error $\mathcal{R}$, once integrated in time, vanishes as $h \rightarrow 0$.
Therefore, integrating in time~\eqref{eq:equationrho}, using~\eqref{eq:strongcomprhop},~\eqref{eq:limitvh} and~\eqref{eq:identificationv}, we obtain~\eqref{eq:transport} in the limit $h \rightarrow 0$.

\subsubsection{Passage to the limit in the discrete equation} 
From the convergences~\eqref{eq:strongcomprhop} and~\eqref{eq:limitvh} it follows that
\begin{equation*}
    \rho^h \nabla \phi^h \to \sqrt{\rho}\textbf{v} \quad \text{ weakly in } L^{1}(\Omega \times (0,T)) ,
\end{equation*}
as $ h \to 0$.
Hence, the left-hand side of~\eqref{eq:eulerlagrdis} passes to the desired limit as $h \rightarrow 0$.

Moreover, we can pass to the limit in the terms on the right-hand side of~\eqref{eq:eulerlagrdis} thanks to Lemma~\ref{lemma:compactness rho},~\eqref{eq:strongcomprhop}
~\eqref{eq:strongconvvarphi}, and~\eqref{eq:convener}. In particular, we have that $\mu \geq \rho^m \mathcal{L}^d \llcorner \Omega \otimes \mathcal{L}^1 \llcorner (0,T)$. Indeed, consider a nonnegative test function $\eta \in C^\infty_c(\Omega\times (0,T))$, using the almost everywhere convergence in $\Omega \times (0, T)$ given by~\eqref{eq:strongcomprhop} and applying Fatou's lemma, we obtain \[
\int_{\Omega}\int_0^T  \rho^m \eta\, \dx \dt  \leq \liminf_{h \to 0} \int_{\Omega}\int_0^T  (\rho^h )^m \eta \, \dx \dt = \int_{\Omega}\int_0^T \eta \, \mathrm{d}\mu.
\]

\subsubsection{Passage to the limit in the Allen-Cahn equation}
Each term of equation~\eqref{eq:ACopt} passes to the desired limit due to~\eqref{eq:weakcomprho},~\eqref{eq:strongconvvarphi}, hence yielding~\eqref{eq:ACweak}.

\subsubsection{Energy dissipation rate}
Similarly as in~\cite{Chambolle-Laux} (see also~\cite{AGS}), it  can be shown
that the De Giorgi's variational interpolation $\tilde \rho^h$, defined as 
\begin{align}\label{eq:degiorgimin}
\tilde\rho^h(t)  \in  \arg \min_{\rho \in \mathcal{K}} \Big\{ \frac{W^2_2(\rho, \rho^h_{n-1})}{2(t-(n-1)h)}  + \mathcal{E}_\varepsilon(\rho, \varphi_n^h) \Big\}
\text{ if } t  \in ((n-1)h,nh],
\end{align}
satisfies 
\begin{equation} \label{dgine}
\begin{split}
    &\frac{h}{2} \left (\frac{W_2(\rho_n^h, \rho_{n-1}^h)}{h} \right)^2 + \frac{1}{2} \int_{0}^{h} \left (\frac{W_2(\tilde \rho^h((n-1)h+t),\rho_{n-1}^h)}{t} \right )^2 \text{d}t \\
    &\le \mathcal{E}_\varepsilon(\rho_{n-1}^h,\varphi_{n}^h)-\mathcal{E}_\varepsilon(\rho_{n}^h,\varphi_n^h) \le \mathcal{E}_\varepsilon(\rho_{n-1}^h,\varphi_{n-1}^h)-\mathcal{E}_\varepsilon(\rho_{n}^h,\varphi_n^h),
\end{split}
\end{equation}
where the last inequality follows from~\eqref{eq:phase-descent-n} and~\eqref{eq:restart-phase-descent}.
Notice that $\tilde \rho^h (n h ) = \rho^h_{n} = \rho^h(nh)$, for all $n \in \mathbb{N}_0$.

Summing in~\eqref{dgine} over $n$ from $n_0 +1$ to $n_1$, we obtain
\begin{equation}
\begin{split}
    &\frac{h}{2} \sum_{n=n_0+1}^{n_1} \left (\frac{W_2(\rho_n^h, \rho_{n-1}^h)}{h} \right)^2 + \frac{1}{2} \sum_{n=n_0+1}^{n_1}  \int_{0}^{h} \left (\frac{W_2(\tilde \rho^h((n-1)h+t),\rho_{n-1}^h)}{t} \right )^2 \text{d}t \\
    &\le \mathcal{E}(\rho_{n_0}^h,\varphi_{n_0}^h)
    - \mathcal{E}(\rho_{n_1}^h,\varphi_{n_1}^h) \le \mathcal{E}(\rho_{0},\varphi_{0})
    - \mathcal{E}(\rho_{n_1}^h,\varphi_{n_1}^h).
    \label{eq:discdiss}
\end{split}
\end{equation}
First, recalling~\eqref{eq:kanto}-\eqref{phibound} and, in particular, the convergence~\eqref{eq:limitvh}, we can pass to the limit as $h \to 0$ in the first term on the left-hand side of~\eqref{eq:discdiss}.
Then, note that, by a change of variable, it holds
\[
 \frac{1}{2}\sum_{n=n_0+1}^{n_1} \int_{0}^{h} \left (\frac{W_2(\tilde \rho^h((n-1)h+t),\rho_{n-1}^h)}{t} \right )^2  \text{d}t =  \frac{1}{2}\int_{n_0 h}^{n_1 h} \left (\frac{W_2(\tilde \rho^h(t),\rho^h(t))}{t-[t/h]h} \right )^2   \text{d}t ,
\]
and
\[
\frac{W^2_2(\tilde \rho^h(t),\rho^h(t))}{2(t-[t/h]h)}  = \frac{t-[t/h]h}{2} \int_{\Omega} |\nabla \tilde \phi^h(t)|^2 \tilde \rho^h \, \dx,
\]
where $\tilde \phi^h$ is the optimal Kantorovich potential in $W_2(\tilde \rho^h(t),\rho^h(t))$.
For $n_0 = 0$, $n_1=N$, and $\tau=Nh \in (0,T)$, it follows that 
\[
\frac{1}{2}\sum_{n=1}^{N} \int_{0}^{h} \left (\frac{W_2(\tilde \rho^h((n-1)h+t),\rho_{n-1}^h)}{t} \right )^2 \text{d}t = \frac{1}{2} \int_{0}^{\tau} \int_{\Omega} |\nabla \tilde \phi^h(t)|^2 \tilde \rho^h \,  \dt\dx.
\]
We deduce that also $\mathbf{\tilde v}^h := \sqrt{\tilde \rho^h} \nabla \tilde \phi^h $ weakly converges in $(L^2(\Omega \times (0,T)))^d$ to the function $\mathbf{v}$ given by~\eqref{eq:limitvh}.
Indeed, arguing similarly to the convergence results established for $\rho^h$, one can show that for almost every $t \in (0,T)$ it holds
$\tilde \rho^h(t) \to \tilde \rho(t)$ in $L^1(\Omega)$, therefore one can conclude using the analogue of~\eqref{eq:eulerlagrdis} for $(\tilde \rho^h(t), \varphi^h(t))$ together with~\eqref{eq:strongconvvarphi}.
Moreover, since $\tilde \rho^h(t) \to \rho(t)$ strongly in the $2$-Wasserstein distance due to~\eqref{eq:degiorgimin} and the equivalent of~\eqref{eq:energydecr} (see also equation 3.2.7~\cite{AGS}), we can identify $\tilde \rho = \rho$. 
Hence, we can also pass to the limit in the second term on the left-hand side of~\eqref{eq:discdiss} and observe that
\begin{align*}
\int_0^{\tau} \int_\Omega |\mathbf{v}|^2\, \dx \dt 
\geq \int_{\supp \rho} |\mathbf{v}|^2\, \dx \dt = \int_{\supp \rho} \rho |\mathbf{u}|^2\, \dx \dt 
= \int_0^{\tau} \int_\Omega \rho |\mathbf{u}|^2\, \dx \dt,
\end{align*}
where we used~\eqref{eq:identificationv}.

To conclude about the validity of~\eqref{eq:energydiss} we use the lower semicontinuity of the energies $\mathcal{E}(\rho^h(\cdot,t),\varphi^h(t))$ for almost every $t \in (0,T)$ due to~\eqref{eq:coercivity},~\eqref{eq:strongcomprhop}, and~\eqref{eq:strongconvvarphi}.


\section{The sharp interface limit ($\eps \to 0$)}\label{sec:sharplimit}
In this section, we prove that weak solutions of the phase-field model in the sense of Definition~\ref{def:weaksol} converge, as the parameter $\eps \to 0$, to weak solutions of the Verigin problem with phase transition as introduced in~\cite{KLM}. We establish this result by showing strong compactness properties for $(\rho_\eps,\varphi_\eps)_{\eps>0}$ and then passing to the limit in the distributional equations~\eqref{eq:transport}-\eqref{eq:ACweak}.

The starting point of~\cite{KLM} is a minimizing movement scheme for the Verigin problem with phase transition. In particular, for $\rho \in L^m(\Omega)$ and $E \subset \Omega$ a set of finite perimeter, the evolution is driven by the energy
\begin{align}\label{eq:sharpenergy}
\mathcal{E}(\rho, \chi_E) := \sigma \int_{\Omega} 1 \, \mathrm{d}|\nabla \chi_E |  +   \int_{\Omega}    \frac{1}{m-1}\rho^m \,\dx+  \int_{\Omega}\chi_E ( \lambda   - \rho )  \, \mathrm{d}x,
\end{align}
where $\sigma>0$ denotes the surface tension. 
Similarly as done in Section~\ref{sec:existence}, the authors construct discrete solutions via a minimizing movement scheme for the energy~\eqref{eq:sharpenergy}, using the $2$-Wasserstein distance as dissipation.
They then show that, as the time discretization step $h \to 0$, these solutions converge to weak solutions of the Verigin problem with phase transition in the sense of the following definition.

\begin{definition}[Weak solution to the Verigin problem with phase transition]
     \label{def:sharpsol}
Let $T>0$ and $\Omega \subset \mathbb{R}^d$ be a smooth open set.    Let $E_0 \subset \Omega$ be a measurable set and $\rho_0 \in L^1(\Omega)$ be initial conditions such that $\int_\Omega \rho_0=1$ and $\mathcal{E}(\rho_0,\chi_{E_0})< \infty$. A triple of functions $(\rho, \chi, \mathbf{u})$ is a \textit{distributional solution to the Verigin problem with phase transition} if $\rho \in L^{m}(\Omega \times (0,T))$,  $\chi \in   L^\infty(0,T;BV(\Omega))$ with $0\le \chi \le 1$,  $\mathbf{u} \in (L^2( \Omega \times (0,T); \rho \, \dx\dt ))^{ d}$, and satisfies the following properties:
\begin{itemize}
    \item[i)] For all $\eta \in C^\infty_c(\overline\Omega \times [0, T))$, we have 
    \begin{equation}\label{eq:transportsharp}
        - \int_0^T \int_{\Omega} \rho (\partial_t \eta + \mathbf{u}  \cdot \nabla \eta ) \, \dx \dt = \int_{\Omega} \rho_0 \eta(\cdot, 0) \, \dx .
    \end{equation}
     \item[ii)]  The distributional Young-Laplace law
    \begin{align}\label{eq:weaksolsharp}
       & - \int_0^T \int_{\Omega} \rho \mathbf{u} \cdot \xi \,\mathrm{d} x \dt  \\
      \notag  &= \sigma \int_0^T \int_{\Omega} (\operatorname{div} \xi - \nu  \cdot \nabla \xi \nu  )
   \, \mathrm{d} |\nabla \chi| \dt 
      -\int_0^T \int_{\Omega} \mathrm{div} \xi\, \mathrm{d} \mu + \int_0^T \int_{\Omega} \lambda\chi \mathrm{div} \xi \,  \dx \dt
    \end{align}
    is satisfied for all $\xi \in C_c^\infty(\overline \Omega \times [0, T); \mathbb{R}^d) $ with $\xi \cdot \nu_{ \Omega} =0$ along $\partial \Omega \times (0,T)$, where $\nu $ denotes the Radon-Nikodym derivative $-\frac{ \mathrm{d} \nabla \chi}{\mathrm{d} |\nabla \chi|}$ and $\mu \in \mathcal{M}_b(\Omega \times (0,T))$ is such that $\mu \ge \rho^m \mathcal{L}^d \llcorner \Omega \otimes \mathcal{L}^1 \llcorner (0,T)$.
    \item[iii)] The energy dissipation rate 
    \begin{align}\label{eq:energydisssharp}
    \mathcal{E}(\rho(\tau), \chi(\tau)) + \int_0^\tau \int_\Omega \rho |\mathbf{u}|^2\, \dx \dt \leq \mathcal{E}(\rho_0, \chi_{E_0})
    \end{align}
    holds for almost every $\tau\in (0,T)$.
\end{itemize}
\end{definition}

We now turn to the main result of this section and prove that weak solutions to the phase-field model~\eqref{eq:phasefield} in the sense of Definition~\ref{def:weaksol} satisfying the almost-minimizing property~\eqref{eq:almostmin} converge to weak solutions of the Verigin problem with phase transition in the sense of Definition~\ref{def:sharpsol}. In particular, this result applies to the solutions constructed in Section~\ref{sec:existence} (cf.~Theorem~\ref{theo:existence}) and reads as follows.

\begin{theorem}\label{theo:sharplimit}
  Let $m\geq 2$ and $d \in \{2,3\}$. Let $(\rho_\eps, \varphi_\eps,\mathbf{u}_\eps)$ be weak solutions to the phase-field model~\eqref{eq:phasefield}  in the sense of Definition~\ref{def:weaksol} with $\mu_\eps = \rho_\eps^m \mathcal{L}^d \llcorner \Omega \otimes \mathcal{L}^1 \llcorner (0,T)$ satisfying the almost-minimizing property~\eqref{eq:almostmin}, and
  with well-prepared initial data $(\rho_{\eps,0}, \varphi_{\eps,0})$, i.e., satisfying 
  \begin{subequations}
  \begin{align}
  \rho_{\eps,0}&\rightarrow\rho_{0}\quad\text{in $L^m(\Omega)$,} \label{eq:convrhoeps0}\\
\varphi_{\eps,0}&\rightarrow\chi_{E_0}\quad\text{in $L^1(\Omega)$,} \label{eq:convphieps0}\\
    \mathcal{E}_{\textnormal{int}, \eps}(\varphi_{\eps,0}) = \int_{\Omega} \frac\varepsilon2 |\nabla \varphi_{\eps,0}|^2 + \frac1\varepsilon F(\varphi_{\eps,0}) \, \mathrm{d}x &\rightarrow\sigma |\nabla \chi_{E_0}|(\Omega), \label{eq:conveneps0}
\end{align}
\end{subequations}
where $\sigma := \int_0^1\sqrt{2F(s)}\,\mathrm{d}s$,
for some $\rho_0 \in L^m(\Omega)$ and $E_0 \subset \Omega$ of finite perimeter in $\Omega$.
 Then,  under the assumption $\rho_\eps \geq c $ for a fixed constant $c>0$,  there exists a subsequence $\eps_j\rightarrow 0$ such that the
  following properties hold.
  \begin{enumerate}
  \item[(i)] There exists $\rho \in L^{m}(\Omega \times (0,T))$ such that
   \begin{equation}\label{eq:convrhoeps}
\rho_{\eps_j}\rightarrow \rho \qquad\text{strongly in } L^p(\Omega \times (0,T)), \quad \text{ for every } 1\le p<m.
    \end{equation}
  \item[(ii)] 
  There exists $\chi \in L^{\infty}(0,T;BV(\Omega))$ 
  such that
  \begin{equation} \label{eq:convarphieps0}
\varphi_{\varepsilon_j} \to\chi 
\quad\text{ strongly in } L^1(0,T;L^1_0(\Omega)).
  \end{equation}
Moreover, there exist a function $\tilde\chi\in L^\infty(0,T;BV(\Omega;\{0,1\}))$ and a measurable set $R\subset[0,T]$ such that $\tilde\chi(t)=\chi(t)$ for $t\in R$, while $\tilde\chi(t)\equiv c\in\{0,1\}$ for $t\notin R$. In particular, $\tilde\chi(t)=\chi_{E(t)}$ for a family of sets $E(t)\subset\Omega$ of finite perimeter.
  \item[(iii)] There exists $\mathbf{u} \in (L^2( \Omega \times (0,T); \rho \, \dx\dt ))^{ d}$  such that
    \begin{equation}     \label{eq:convweigthedueps}
    \sqrt{\rho_{\eps_j}}\mathbf{u}_{\eps_j}\rightarrow \sqrt{\rho} \mathbf{u}
      \quad\text{weakly in } (L^2( \Omega \times (0,T))^{ d}.
    \end{equation}
      \item[iv)] The time-integrated interfacial phase-field energy converges in the sense that
      \begin{equation}\label{eq:energyconv}
          \int_0^T  \mathcal{E}_{\textnormal{int}, \eps}(\varphi_{\eps}(\cdot, t)) \, \dt \rightarrow\sigma  \int_0^T   |\nabla \tilde \chi(\cdot, t)|(\Omega) \, \dt.
      \end{equation}
    \item[v)] The triple $(\rho, \tilde \chi, \mathbf{u})$ is a distributional solution to the Verigin problem with phase transition in the sense of
    Definition~\ref{def:sharpsol}.
  \end{enumerate}
\end{theorem}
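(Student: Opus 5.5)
The plan is to proceed in four stages: uniform bounds, compactness, convergence of the interfacial energies via the almost-minimizing property, and passage to the limit in \eqref{eq:transport}--\eqref{eq:weaksol}.

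\textbf{Uniform bounds.} The energy dissipation \eqref{eq:energydiss}, together with the well-preparedness \eqref{eq:convrhoeps0}--\eqref{eq:conveneps0}, makes $\mathcal{E}_\eps(\rho_{\eps,0},\varphi_{\eps,0})$ uniformly bounded. Via \eqref{eq:coercivity} this gives, uniformly in $\eps$:
\begin{itemize}
\item $\sup_t \mathcal{E}_{\textnormal{int},\eps}(\varphi_\eps(t))$ and $\sup_t\|\rho_\eps(t)\|_{L^m}$ are bounded;
\item $\int_0^T\!\int_\Omega\rho_\eps|\mathbf{u}_\eps|^2$ is bounded;
\item the Wasserstein Hölder estimate $W_2(\rho_\eps(t),\rho_\eps(s))\le C|t-s|^{1/2}$ holds, obtained from \eqref{eq:transport} by the Benamou--Brenier argument.
\end{itemize}
The Modica--Mortola trick then bounds $|\nabla(G\circ\varphi_\eps)|(\Omega)$ in $L^\infty(0,T)$, where $G(s)=\int_0^s\sqrt{2F}$.

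\textbf{Compactness of $\rho_\eps$ (item (i)).} I would mimic Proposition~\ref{prop:strongrho}, applying Theorem~\ref{theo:RossiSava} with $d=W_2$. Spatial regularity of $g(\rho_\eps)$ comes from the momentum equation: testing \eqref{eq:weaksol} shows $\nabla\rho_\eps^m = -\rho_\eps\mathbf{u}_\eps + \lambda\nabla\varphi_\eps - \div\mathbb{T}_\eps$. Using the Allen--Cahn equation \eqref{eq:ACweak} in its smooth form, $\div\mathbb{T}_\eps = (\rho_\eps-\lambda)\nabla\varphi_\eps$, so
\[
\nabla\rho_\eps^m=-\rho_\eps\mathbf{u}_\eps+\rho_\eps\nabla\varphi_\eps .
\]
The problem is that $\nabla\varphi_\eps$ is only $O(\eps^{-1/2})$ in $L^2$, and $\varphi_\eps\to\chi$ jumps. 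I would therefore not aim for a $W^{1,s}$ bound of $\rho_\eps$ itself. Instead I would bound $\nabla(\rho_\eps^m - \lambda\varphi_\eps)$-type quantities, or use the combination $\tfrac{m}{m-1}\rho^{m-1}-\varphi$ as in \eqref{eq:santaoptgrad}. The assumption $\rho_\eps\ge c$ gives $|\nabla(\tfrac{m}{m-1}\rho_\eps^{m-1}-\varphi_\eps)|\le c^{-1/2}\sqrt{\rho_\eps}|\mathbf{u}_\eps|$, which is bounded in $L^2$. This yields a BV-in-space bound for $\rho_\eps^{m-1}$, since $\varphi_\eps$ is bounded in BV through $G\circ\varphi_\eps$ plus the $L^4$ control. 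The spatial compactness functional then uses BV instead of $W^{1,s}$, and a.e.\ convergence plus interpolation gives \eqref{eq:convrhoeps}.

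\textbf{Compactness of $\varphi_\eps$ (item (ii)).} Time compactness of $\varphi_\eps$ is not available directly. I would exploit the fact that, via \eqref{eq:almostmin} and the equation, $\varphi_\eps(t)$ is an almost-minimizer of $\mathcal F_{\rho_\eps(t)}$. Since $\lambda\varphi_\eps - \tfrac{m}{m-1}\rho_\eps^{m-1}\cdot$ (up to constants) is controlled in $L^2(0,T;H^1)$ as above, $\varphi_\eps$ is controlled in $L^1(0,T;L^1_0)$ modulo constants; this explains the quotient space in \eqref{eq:convarphieps0}. More concretely, I would write $\varphi_\eps = \tfrac{m}{m-1}\rho_\eps^{m-1} - \psi_\eps$ with $\nabla\psi_\eps$ bounded in $L^2$, so $\psi_\eps$ converges strongly modulo constants. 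Combined with strong convergence of $\rho_\eps$ (valid on $\{\rho_\eps\ge c\}$, which is everywhere), this gives \eqref{eq:convarphieps0}.

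The limit $\chi$ is then BV with $0\le\chi\le1$; values $\{0,1\}$ follow from $\int F(\varphi_\eps)\le C\eps$, up to an additive constant per time slice. Defining $R$ as the set of times where this constant vanishes, and setting $\tilde\chi$ constant elsewhere, requires the observation that the constant can only be $0$ or $\pm1$ when the limit still takes values in the wells. This step is delicate.

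\textbf{Energy convergence (item (iv)): the main obstacle.} The liminf inequality follows from Modica--Mortola slicewise and Fatou. For the limsup, I would use \eqref{eq:almostmin} with $\tilde\varphi$ a recovery sequence for $\tilde\chi(t)$, chosen measurably in $t$ via the standard optimal-profile construction around $\partial E(t)$ with $\int_0^T\mathcal{E}_{\textnormal{int},\eps}(\tilde\varphi)\to\sigma\int_0^T|\nabla\tilde\chi|$. The coupling terms $\int\tilde\varphi(\lambda-\rho_\eps)$ and $\int\varphi_\eps(\lambda-\rho_\eps)$ converge to the same limit by the strong convergences. The mismatch between $\chi$ and $\tilde\chi$ on $[0,T]\setminus R$ is handled because for constant states the coupling terms agree after the constant shift, and $\int\rho_\eps=1$ fixes the mass.

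Making the time-dependent recovery sequence measurable, and ensuring that the constant ambiguity does not spoil the coupling term, is the main obstacle. I would first try approximating $\tilde\chi$ by piecewise-constant-in-time BV functions.

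\textbf{Passage to the limit (items (iii) and (v)).} The weak $L^2$ limit of $\sqrt{\rho_\eps}\mathbf{u}_\eps$ is identified as in Section~\ref{sc:discretevel}, which gives \eqref{eq:convweigthedueps} and \eqref{eq:transportsharp}. For \eqref{eq:weaksolsharp}, energy convergence (iv) allows the Luckhaus--Sturzenhecker / Laux--Simon argument: equipartition of energy,
\[
\frac\eps2|\nabla\varphi_\eps|^2-\frac1\eps F(\varphi_\eps)\to0
\]
in $L^1$, and convergence of the normals. These show that the stress tensor $\mathbb{T}_\eps:\nabla\xi$ converges to $\sigma(\div\xi-\nu\cdot\nabla\xi\,\nu)|\nabla\tilde\chi|$. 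The defect measure $\mu$ arises as the weak-* limit of $\rho_\eps^m$, and $\lambda\varphi_\eps\div\xi\to\lambda\tilde\chi\div\xi$ after accounting for the constants on $R^c$. Finally, the dissipation inequality \eqref{eq:energydisssharp} follows from lower semicontinuity, Modica--Mortola liminf and \eqref{eq:conveneps0}.
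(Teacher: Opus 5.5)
Your overall architecture matches the paper's: Benamou--Brenier for the $W_2$-modulus; the identity $\nabla\rho_\eps^m=-\rho_\eps\mathbf{u}_\eps+\rho_\eps\nabla\varphi_\eps$ (your intermediate expression needs $+\div\mathbb{T}_\eps$) and hence $\nabla(j(\rho_\eps)-\varphi_\eps)=-\mathbf{u}_\eps$ with $j(s)=\frac{m}{m-1}s^{m-1}$; the almost-minimizing property tested with a recovery sequence; equipartition and Reshetnyak for $\mathbb{T}_\eps$. However, the real difficulty is in the two compactness steps, not in the measurability issue you single out, and both steps have a genuine gap.

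In (i) you derive a BV bound on $\rho_\eps^{m-1}$ from a uniform BV bound on $\varphi_\eps$ obtained ``through $G\circ\varphi_\eps$ plus the $L^4$ control''. This does not follow. Since $G'=\sqrt{2F}$ vanishes at the wells, $|\nabla(G\circ\varphi_\eps)|$ does not control $|\nabla\varphi_\eps|$. For example, $\varphi=\sqrt{\eps}\sin(x_1/\eps)$ has $\mathcal{E}_{\textnormal{int},\eps}(\varphi)\le C$ but $\int_\Omega|\nabla\varphi|\,\dx\sim\eps^{-1/2}$. So $\rho_\eps$ itself is not shown to be tight for any BV-type integrand. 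The paper circumvents this by replacing $\varphi_\eps$ with the approximant $\psi_\eps(t)\in BV(\Omega;\{0,1\})$ of Lemma~\ref{lemma:Schaetzle}, which has uniformly bounded perimeter and $\|\varphi_\eps-\psi_\eps\|_{L^2(\Omega)}\le\omega_0(\eps)$. Then $a_\eps:=j(\rho_\eps)-\varphi_\eps+\psi_\eps$ is bounded in $L^1(0,T;BV(\Omega))$ and $L^1$-close to $j(\rho_\eps)$. Theorem~\ref{theo:RossiSava} is applied to the nearby family $g_\eps:=j^{-1}((a_\eps)_+)$. These functions no longer have unit mass, hence the generalized Wasserstein distance, and they inherit time-equicontinuity from $\rho_\eps$ via $g_\eps-\rho_\eps\to0$ in $L^1$.

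In (ii), write $\phi_\eps:=j(\rho_\eps)-\varphi_\eps$ (your $\psi_\eps$; below, $\psi_\eps$ denotes the approximant above). The bound on $\nabla\phi_\eps=-\mathbf{u}_\eps$ in $L^2(\Omega\times(0,T))$ controls only spatial oscillations modulo constants, not oscillations in time. Since $j(\rho_\eps)$ already converges strongly, time-compactness of $\phi_\eps$ in $L^1_0$ is equivalent to that of $\varphi_\eps$, so your conclusion that it ``converges strongly modulo constants'' is circular.

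The missing idea is Luckhaus' estimate, Lemma~\ref{lemma:Luckhaus}: the sum of a $\{0,\pm1\}$-valued function and an $H^1$ function can be small in $L^1$ only if the discrete part is close to a constant. The paper applies it to time increments, $\psi=\psi_\eps-\psi_\eps(\cdot+\tau)$ and $\phi=\phi_\eps-\phi_\eps(\cdot+\tau)$. Then $\psi+\phi=a_\eps-a_\eps(\cdot+\tau)$ is small on average by (i) and $a_\eps-j(\rho_\eps)\to0$. Meanwhile $\int_0^T\|\nabla\phi_\eps\|^2_{L^2(\Omega)}\,\dt$ is bounded by the dissipation together with $\rho_\eps\ge c$. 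This yields the integrated time-equicontinuity of $\psi_\eps$ in $L^1_0$ needed for Fr\'echet--Kolmogorov. Discreteness enters again at fixed times: two $\{0,1\}$-valued limits that agree in $L^1_0$ either coincide or are the constants $0$ and $1$. This underlies the definition of $R$ and forces $\chi(t)$ to be constant off $R$.

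Finally, in (iv) the coupling terms do not ``agree'' off $R$. There $\int_\Omega\chi(\lambda-\rho)\,\dx=\chi(t)(\lambda|\Omega|-1)$ with $\chi(t)\in[0,1]$. You must pick the constant value of $\tilde\chi$ according to the sign of $\lambda|\Omega|-1$, as in \eqref{eq:tildechi}, to obtain $\int_\Omega\tilde\chi(\lambda-\rho)\,\dx\le\int_\Omega\chi(\lambda-\rho)\,\dx$. This one-sided inequality is exactly what the limsup argument and the energy inequality at $\tau\notin R$ require.
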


\begin{remark} 
Notice that a distributional solution to the Verigin problem with phase transition (cf. Definition~\ref{def:sharpsol}) is recovered only after a suitable choice of the representative for the limit in~\eqref{eq:convarphieps0}. Indeed, the convergence of $\varphi_\varepsilon$ is obtained only in $L^1_0(\Omega)=L^1(\Omega)/\operatorname{span}\{1\}$, and therefore determines the limit only up to spatially constant functions. Similarly as in~\cite{Schaetzle}, at ``regular" times $t\in R$ the limit $\chi(t)$ is uniquely identified as the characteristic function of a set of finite perimeter, whereas for $t\notin R$ it is only known to be spatially constant. Hence, we define $\tilde\chi=\chi$ on $R$ and suitably choose $\tilde\chi(t)\in\{0,1\}$ for $t\notin R$, so that the resulting triple $(\rho,\tilde\chi,u)$ is a distributional solution in the sense of Definition~\ref{def:sharpsol}.
\end{remark}
\begin{remark} 
We observe that the restrictions $d\in\{2,3\}$ and $m\geq 2$ in the hypothesis of Theorem~\ref{theo:sharplimit} are used 
to obtain the $H^2$-regularity of the phase field in Lemma~\ref{lemma:regularity}.
In particular, we apply Lemma~\ref{lemma:regularity} to justify 
the computation of the distributional divergence of the diffuse-interface stress tensor $\mathbb{T}_\eps$ in the first steps of the proof of Theorem~\ref{theo:sharplimit}$(i)$.
Moreover, the assumption $m\geq 2$ is also used in the proof of Theorem~\ref{theo:sharplimit}$(v)$ to pass to the limit in the coupling term when establishing the pointwise lower-semicontinuity estimate~\eqref{eq:pointwise-energy-liminf} for the total energy.
\end{remark}
\begin{remark} 
    The interfacial energy convergence~\eqref{eq:energyconv} is often imposed as an additional assumption in sharp-interface convergence results towards $BV$-type solutions of geometric flows, 
    both for Allen--Cahn approximations of mean curvature flow (cf., e.g.,~\cite{LauxSimon,HenselLauxcontact}) and for conserved phase-field models (cf., e.g.~\cite{KroemerLaux,LauxStinson}). Here, this convergence follows from the quasi-static structure of the Verigin problem with phase transition, through the almost-minimizing property~\eqref{eq:almostmin}.
    Moreover, in view of~\eqref{eq:energyconv} and the convergence of the coupling term, convergence of the total phase-field energy~\eqref{phasefieldenergy} would reduce to the convergence of the bulk energy term.
    In particular, if one could establish
    \begin{align*}
          \int_0^T \int_\Omega (\rho_\eps)^m  \, \dx \dt \to \int_0^T \int_\Omega (\rho)^m  \, \dx  \dt \quad \text{as } \eps \to 0,
    \end{align*}
    then the defect measure could be identified as $\mu = \rho^m \mathcal{L}^d \llcorner \Omega \otimes \mathcal{L}^1 \llcorner (0,T)$.
\end{remark}

\subsection{Compactness}
In this section, we prove the items $(i), (ii),$ and $(iii)$ of Theorem~\ref{theo:sharplimit}.

\subsubsection{Compactness of $\rho_\eps$} \label{sec:compsharp}
In this subsection we prove the strong convergence of $(\rho_\varepsilon)_{\varepsilon>0}$, namely the item $(i)$ of Theorem~\ref{theo:sharplimit}. 
First, we establish the $W_2$-H\"older continuity in time.
\begin{lemma}[Compactness in time]\label{lemma:comptimerhoeps}
    There exists a constant $C>0$ depending only on $T$ and $\mathcal{E}_\varepsilon(\rho_0, \varphi_0)$, such that, for any $\varepsilon>0$ and $0 \le t_0 <t_1\le T$, it holds $W_2(\rho_\varepsilon(\cdot,t_1),\rho_\varepsilon(\cdot,t_0)) \le C |t_1-t_0|^{1/2}$.
\end{lemma}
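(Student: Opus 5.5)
The plan is to combine the dissipation bound~\eqref{eq:energydiss} with the Benamou--Brenier characterization of $W_2$ along solutions of the continuity equation~\eqref{eq:transport}. The argument has three steps.

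\emph{Step 1: uniform bound on the kinetic energy.} We show that $\int_0^T\int_\Omega \rho_\eps|\mathbf{u}_\eps|^2\,\dx\dt$ is bounded uniformly in $\eps$. By~\eqref{eq:energydiss} with $T'=T$ (or $T'$ arbitrarily close to $T$),
\[
\int_0^T\int_\Omega \rho_\eps|\mathbf{u}_\eps|^2\,\dx\dt \le \mathcal{E}_\eps(\rho_{\eps,0},\varphi_{\eps,0}) - \mathcal{E}_\eps(\rho_\eps(T'),\varphi_\eps(T')).
\]
The subtracted energy is bounded below: its interfacial and bulk parts are nonnegative, and the coupling term satisfies
\[
\int_\Omega\varphi(\lambda-\rho)\,\dx \ge -C-\tfrac{1}{2(m-1)}\int_\Omega\rho^m\,\dx - C\int_\Omega F(\varphi)\,\dx,
\]
as in Remark~\ref{rk:existence}. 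The $F$ term is absorbed by $\tfrac1\eps F$ for $\eps$ small, so no shift of the energy is needed. Alternatively, we can invoke Remark~\ref{rmk:enpos}, whose shift constant is uniform for $\eps\le1$ because the absorption only improves as $\eps$ decreases. The initial energies $\mathcal{E}_\eps(\rho_{\eps,0},\varphi_{\eps,0})$ are uniformly bounded by the well-preparedness assumptions~\eqref{eq:convrhoeps0}--\eqref{eq:conveneps0}. The coupling term $\int\varphi_{\eps,0}(\lambda-\rho_{\eps,0})$ in particular is controlled by $\|\varphi_{\eps,0}\|_{L^{m'}}\|\rho_{\eps,0}\|_{L^m}$, and $\|\varphi_{\eps,0}\|_{L^4}^4\lesssim 1+\int F(\varphi_{\eps,0})\le 1+\eps\,\mathcal{E}_{\textnormal{int},\eps}(\varphi_{\eps,0})$. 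This gives a constant $C_0$ depending only on the data.

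\emph{Step 2: Benamou--Brenier estimate.} The pair $(\rho_\eps,\rho_\eps\mathbf{u}_\eps)$ solves the continuity equation distributionally on $\overline\Omega$ with no-flux boundary condition, and its action $\int\rho_\eps|\mathbf{u}_\eps|^2$ is finite. By the standard result (e.g.\ \cite[Thm.~8.3.1]{AGS}, or the dynamic formulation in~\cite{Santabook}), $t\mapsto\rho_\eps(t)$ then admits a narrowly continuous representative that is absolutely continuous in $(\mathcal{P}(\overline\Omega),W_2)$, with metric derivative $|\rho_\eps'|(t)\le\|\mathbf{u}_\eps(t)\|_{L^2(\rho_\eps(t))}$ for a.e.\ $t$. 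Since $\Omega$ is bounded, all second moments are finite, so the result applies.

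\emph{Step 3: Cauchy--Schwarz.} Combining the two steps,
\[
W_2(\rho_\eps(t_1),\rho_\eps(t_0))\le\int_{t_0}^{t_1}\|\mathbf{u}_\eps\|_{L^2(\rho_\eps)}\,\dt\le |t_1-t_0|^{1/2}\Big(\int_0^T\!\!\int_\Omega\rho_\eps|\mathbf{u}_\eps|^2\Big)^{1/2}\le C_0^{1/2}|t_1-t_0|^{1/2}.
\]

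A fallback route avoids the continuous theory: pass to the limit $h\to0$ in~\eqref{eq:contcomp}, which gives the same estimate with constant $\sqrt2\,\mathcal{E}_\eps(\rho_0,\varphi_0)^{1/2}$, again uniform by Step 1. This, however, only covers solutions constructed via Theorem~\ref{theo:existence}.

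The main obstacle is Step 1, namely making the lower bound of $\mathcal{E}_\eps$ and the upper bound of the initial energy uniform in $\eps$, with the coupling term controlled by the $\tfrac1\eps F$ term. A secondary technical point is that Step 2 requires the continuity equation to hold with test functions in $C^\infty_c(\overline\Omega\times[0,T))$, which~\eqref{eq:transport} provides, and an appropriate choice of time representative.
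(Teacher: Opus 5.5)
Your proof is correct and follows essentially the paper's route. The paper also combines the dynamic (Benamou--Brenier) characterization of $W_2$ along the continuity equation~\eqref{eq:transport} with the dissipation bound~\eqref{eq:energydiss}. It phrases this through the time-rescaled competitor $\tilde\rho_\eps(x,t)=\rho_\eps(x,t_0+t(t_1-t_0))$, $\tilde{\mathbf u}_\eps(x,t)=(t_1-t_0)\mathbf u_\eps(x,t_0+t(t_1-t_0))$ on $[0,1]$, which is equivalent to your metric-derivative bound followed by Cauchy--Schwarz. Your Step~1 is more careful than the paper, which simply invokes the energy shift of Remark~\ref{rmk:enpos}: your explicit check that the lower energy bound and the initial energies (via well-preparedness) are uniform for small $\eps$ is what makes $C$ independent of $\eps$ in the sharp-interface limit.
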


\begin{proof}
    The Benamou-Brenier formula reads as
    \begin{equation*}
        W_2^2(\rho(\cdot,t_1),\rho(\cdot,t_0)) = \inf_{\tilde \rho, \tilde{\mathbf{u}}} \Big \{\int_0^1\int_{\Omega} \tilde \rho(x,t) | \tilde{\mathbf{u}}(x,t)|^2 \, \dx \dt \Big \},
    \end{equation*}
    where the infimum is taken over all couples satisfying $\partial_t \tilde \rho + \text{div}(\tilde \rho \tilde{\mathbf{u}})=0$, $\tilde \rho(\cdot, 0) = \rho(\cdot, t_0)$, and $\tilde \rho(\cdot, 1) = \rho(\cdot, t_1)$.
    In particular, we can consider
    \[
    \tilde \rho_\varepsilon(x,t):= \rho_\varepsilon(x,t(t_1-t_0)+t_0), \quad \tilde{\mathbf{u}}_\varepsilon(x,t):= (t_1-t_0) \mathbf{u}_\varepsilon(x,t(t_1-t_0)+t_0),
    \]
    which satisfy~\eqref{eq:transport}. Then, from the Benamou-Brenier formula and~\eqref{eq:energydiss} it follows that
    \begin{align*}
        W_2^2(\rho_\varepsilon(\cdot,t_1),\rho_\varepsilon(\cdot,t_0)) &\le \int_0^1 \int_{\Omega} \tilde \rho_\varepsilon(x,t) |\tilde{\mathbf{u}}_\varepsilon(x,t)|^2 \, \dx \dt\\
        &= (t_1-t_0) \int_{t_0}^{t_1} \int_{\Omega}  \rho_\varepsilon(x,t) |{\mathbf{u}}_\varepsilon(x,t)|^2 \, \dx \dt\\
        &\le (t_1-t_0) \mathcal{E}_\varepsilon(\rho_0,\varphi_0),
    \end{align*}
    recall that, thanks to Remark~\ref{rmk:enpos}, we can assume that the energy $\mathcal{E}_\eps$ is positive up to adding a uniform constant.
\end{proof}

Next, we deduce space-compactness for $(\rho_\varepsilon)_{\varepsilon>0}$ and thus prove the desired result.
\begin{proof} [Proof of Theorem~\ref{theo:sharplimit}(i)]
First, we claim that
\begin{align}  
&\nabla(\rho_\varepsilon^m) \in L^1(\Omega \times (0,T)), \notag\\
   &     \tfrac{m}{m-1}\nabla (\rho_\eps^{m-1} ) = -   \mathbf{u}_\eps  +   \nabla \varphi_\eps \; \text{ a.e. in } \Omega \times (0,T).\label{eq:magica2}
\end{align}
Indeed, we recall the diffuse-interface stress tensor
$$
 \mathbb{T}_\eps = \Big(\frac{\eps}{2} |\nabla \varphi_\eps|^2 + \frac{1}{\eps} F(\varphi_\eps)  \Big) \operatorname{Id}  - \eps \nabla \varphi_\eps\otimes  \nabla \varphi_\eps,
$$
whose distributional divergence is given by
$$
\operatorname{div} (\mathbb{T}_\eps) = \Big( -\eps \Delta \varphi_\eps + \frac1\eps F'(\varphi_\eps)\Big) \nabla \varphi_\eps 
$$
and whose well-posedness is due to Lemma~\ref{lemma:regularity}.
Using~\eqref{eq:ACweak}, we obtain
$$
\operatorname{div} (\mathbb{T}_\eps) = \big( \rho_\eps - \lambda\big) \nabla \varphi_\eps 
$$
in the sense of distributions.
Hence, integrating by parts, we can rewrite~\eqref{eq:weaksol} as follows
 \begin{align*}
& - \int_0^T \int_\Omega \rho_\eps  \mathbf{u}_\eps \cdot \xi  \, \dx \dt  \\
&= 
\int_0^T \int_{\Omega}  \mathbb{T}_\eps :\nabla \xi  
   \, \mathrm{d}x \dt  -\int_0^T \int_{\Omega} (\rho_\eps^m - \lambda\varphi_\eps) \mathrm{div} \xi \,  \dx \dt \notag \\
   & = - \int_0^T \int_{\Omega}   \large( \rho_\eps - \lambda\large) \nabla \varphi_\eps  \cdot \xi  
   \, \mathrm{d}x \dt  -\int_0^T \int_{\Omega} (\rho_\eps^m - \lambda\varphi_\eps) \mathrm{div} \xi \,  \dx \dt \notag \\
& = - \int_0^T \int_{\Omega}   \large( \rho_\eps - \lambda\large) \nabla \varphi_\eps  \cdot \xi  
   \, \mathrm{d}x \dt  -\int_0^T \int_{\Omega}  (\rho_\eps^m \mathrm{div} \xi+ \lambda\nabla \varphi_\eps \cdot \xi) \,  \dx \dt\\
   & = \int_0^T \int_{\Omega}  (- \rho_\eps  \nabla \varphi_\eps  \cdot \xi  -  \rho_\eps^m  \mathrm{div} \xi)  \,  \dx \dt
   \notag
\end{align*}
    for all $\xi \in C^\infty_c(\overline \Omega \times [0, T); \mathbb{R}^d) $ with $\xi \cdot \nu_{\Omega} =0$ along $\partial \Omega \times (0,T)$. In particular, it holds
\begin{align} \label{eq:magica}
    \nabla (\rho_\eps^m )=- \rho_\eps  \mathbf{u}_\eps   + \rho_\eps  \nabla \varphi_\eps     
\end{align}
in the sense of distributions. 
Notice that, since $\nabla (\rho_\eps^m ) \in L^1(\Omega \times (0,T))$ by comparison in~\eqref{eq:magica} due to the regularity of the solutions in the sense of Definition~\ref{def:weaksol}, then~\eqref{eq:magica} holds almost everywhere in $\Omega \times (0,T)$. 
Moreover, by assumption $\rho_\eps \geq c >0$,~\eqref{eq:magica} yields the claim~\eqref{eq:magica2}.

We define $$\phi_\eps := j(\rho_\eps) - \varphi_\eps ,$$ where $j(\rho):=\frac{m}{m-1}\rho^{m-1}$. Then, by~\eqref{eq:magica2}, we obtain that $\nabla \phi_\eps = -\mathbf{u}_\eps$ almost everywhere in $\Omega \times (0,T)$.
Moreover, let $a_\eps:= \phi_\eps + \psi_\eps,$ where $\psi_\eps$ is given by Lemma~\ref{lemma:Schaetzle}. We have that $a_\eps \in L^\infty(0,T;L^1(\Omega))$, $|\nabla a_\eps(t)|=|-\mathbf{u}_\eps(t) + \nabla \psi_\eps(t)|$,  and $\int_0^T\| a_\eps(t)\|_{BV(\Omega)} \, \dt\le C$. 
Observe that $a_\eps-j(\rho_\eps)=\psi_\eps -\varphi_\eps$, therefore Lemma~\ref{lemma:Schaetzle} yields
\begin{align}
    a_\eps-j(\rho_\eps) \to 0 \quad  &\text{in } L^\infty(0,T, L^1(\Omega)), \label{eq:convjgrho} \\
    a_\eps-j(\rho_\eps) \to 0 \quad &\text{a.e. } \Omega \times (0,T). \notag
\end{align}
Defining
\[
g_\eps:=j|_{[0,+\infty)}^{-1}((a_\eps)_{+}),
\]
where $x_+:=\max\{x,0\}$, we obtain $\int_0^T\| j(g_\eps(t))\|_{BV(\Omega)} \, \dt\le C$.  Since the function $x \mapsto x_+$ is $1$-Lipschitz and $\rho_\varepsilon \ge c>0$ by assumption, we deduce 
\begin{align*}
    j(g_\eps)-j(\rho_\eps) \to 0   \text{ in } L^\infty(0,T, L^1(\Omega)), \quad 
    j(g_\eps)-j(\rho_\eps) \to 0  \text{ a.e. } \Omega \times (0,T). \notag
\end{align*}
Consequently, we obtain
\begin{align}
     g_\eps-\rho_\eps \to 0 \quad &\text{a.e. } \Omega \times (0,T) , \notag \\
     g_\eps-\rho_\eps \to 0 \quad &\text{in } 
     L^{1}(\Omega \times (0,T)). \label{eq:convgrho}
\end{align}


We introduce the lower semicontinuous functional $\mathcal{F}: L^1(\Omega) \to [0,+\infty]$ defined as
        \[
    \mathcal{F}(u) :=  \|j(u)\|_{BV(\Omega)}.
    \]
    We observe that, for every $C>0$, the set $\mathcal{V}:=\{
v \in L^1(\Omega;[0,+\infty]): \mathcal{F}(v) \le C\} 
$
is compactly embedded in $L^1(\Omega)$ under the strong topology. 
To prove this, consider a sequence $(v_n)_n \subset \mathcal{V}$ and notice that, up to a subsequence, $j(v_n)$ converges to some $ w$ strongly in $ L^1(\Omega)$ and almost everywhere in $\Omega$. The strict monotonicity of $j$ guarantees its invertibility, implying
 $
 v_n = (j)^{-1}(j(v_n)) \to (j)^{-1}(w)$ a.e. in $\Omega$.
The strong convergence in $L^1(\Omega)$ then follows.

Consider a subsequence $\varepsilon_k\downarrow0$ and denote by $g_{k}=g_{\varepsilon_k}$ and $\rho_{k}=\rho_{\varepsilon_k}$. Let $\tilde W_2$ denote the generalized 2-Wasserstein distance (cf.~\cite[Definition 1]{Piccoli-Rossi}). For every $k_0 \in \mathbb{N}$ we have 
\begin{align*}
    &\lim_{\tau \to 0} \sup_{k \in \mathbb{N}} \int_0^{T-\tau} \tilde W_2(g_k(t+\tau),g_k(t))\, \dt \\
    &\le
    \lim_{\tau \to 0} \sup_{k \le k_0} \int_0^{T-\tau} \tilde W_2(g_k(t+\tau),g_k(t))\, \dt + \lim_{\tau \to 0} \sup_{k > k_0} \int_0^{T-\tau} \tilde W_2(g_k(t+\tau),g_k(t))\, \dt \\
    &= \lim_{\tau \to 0} \sup_{k > k_0} \int_0^{T-\tau} \tilde W_2(g_k(t+\tau),g_k(t))\, \dt,
\end{align*}
where the last equality follows from the continuity of time translations of $g_{k}$ in $L^1(0,T;L^1(\Omega))$ for fixed $k$.
Furthermore, we have 
\begin{align*}
    &\lim_{\tau \to 0} \lim_{k_0 \to \infty}\sup_{k> k_0} \int_0^{T-\tau} \tilde W_2(g_k(t+\tau),g_k(t))\, \dt \\
    &\le \lim_{\tau \to 0} \lim_{k_0 \to \infty}\sup_{k> k_0} \bigg(\int_0^{T-\tau} \|g_k(t+\tau) - \rho_k(t+\tau)\|_{L^1(\Omega)}+\|g_k(t) - \rho_k(t)\|_{L^1(\Omega)} \, \dt \\
    & \quad + \int_0^{T-\tau} W_2(\rho_k(t+\tau),\rho_k(t)) \, \dt \bigg)
    =0,
\end{align*}
where the terms on the right-hand side yield zero due to~\eqref{eq:convgrho} and Lemma~\ref{lemma:comptimerhoeps}.

Notice that all the assumptions of Theorem~\ref{theo:RossiSava} are satisfied with $d=\tilde W_2$ (generalized 2-Wasserstein distance), $\mathcal{U}:=(g_{k})_{k \in \mathbb{N}}$, in particular $\mathcal{U}$ is tight with respect to $\mathcal{F}$. 
Applying Theorem~\ref{theo:RossiSava} and using the strong convergence~\eqref{eq:convgrho}, we obtain that $\mathcal{U}$ is relatively compact in $\mathcal{M}(0,T; L^1(\Omega))$, yielding the strong convergence $g_\varepsilon(\cdot,t) \rightarrow \rho(\cdot,t)$ in $L^1(\Omega)$ for almost all $t \in (0,T)$. Moreover, by an application of the dominated convergence theorem, we obtain
\begin{align}\label{eq:convg_eps}
    g_\varepsilon \rightarrow \rho \quad \text{in }L^1(\Omega \times (0,T)).
\end{align}
In particular, we have that 
\begin{equation*}
    \rho_\eps \to \rho \quad \text{in }L^1(\Omega \times (0,T)),
\end{equation*}
due to~\eqref{eq:convgrho}. Finally, the bound~\eqref{eq:energydiss} yields 
   \begin{equation} 
   \rho_{\eps}\rightharpoonup\rho\quad\text{weakly in } L^m(\Omega \times (0,T)), \quad  (\rho_{\eps})^m \overset{*}{\rightharpoonup} \mu \quad\text{weakly-* in measure}
   \label{eq:weakconvrhoeps}
   \end{equation}
   for some $\mu \in \mathcal{M}_b(\Omega \times (0,T))$ such that $\mu \ge \rho^m \mathcal{L}^d \llcorner \Omega \otimes \mathcal{L}^1 \llcorner (0,T)$,
and the desired strong convergence~\eqref{eq:convrhoeps}.

\end{proof}

\subsubsection{Compactness of $\varphi_\eps$}\label{sec:compvarphieps}
In this subsection we prove the strong convergence of $(\varphi_\varepsilon)_{\varepsilon>0}$, which is stated in Theorem~\ref{theo:sharplimit}$(ii)$.

\begin{proof} [Proof of Theorem~\ref{theo:sharplimit}(ii)]
First, notice that the bound~\eqref{eq:energydiss} yields 
   \begin{equation} \varphi_{\eps}\rightharpoonup\chi\qquad\text{weakly in } L^4(\Omega \times (0,T)).\label{eq:weakconvvarphieps} \end{equation}

Recall that $(\psi_\eps)_{\eps>0}$ is the family given by Lemma~\ref{lemma:Schaetzle}, that $a_\eps= \phi_\eps +\psi_\eps$, and that $\nabla \phi_\eps = -\mathbf{u}_\eps$. Let $\tau \in (0,T)$ and, for any function $f:(0,T) \to \mathbb{R}$, let $f_\tau := f(\cdot+ \tau).$
Applying Lemma~\ref{lemma:Luckhaus} with  $\psi=\psi_\eps-\psi_{\eps,\tau}$ and $\phi=\phi_\eps-\phi_{\eps,\tau}$, we obtain
\begin{align*}
			&  \int_0^{T-\tau}\| \psi_\eps - \psi_{\eps,\tau}\|_{L_0^1(\Omega)} \, \dt\notag\\
            &\leq C  \int_0^{T-\tau}\|a_\eps- a_{\eps,\tau}\|_{L^{1}(\Omega)} \,\dt  \\
            & \quad +   C\int_0^{T-\tau}\|a_\eps- a_{\eps,\tau}\|_{L^{1}(\Omega)}^{\frac{1}{2}}\bigg(  \int_{\Omega} |\nabla \phi_\eps|^2  \,\dx +  \int_{\Omega} |\nabla  \phi_{\eps,\tau}|^2  \,\dx\bigg)^{\frac{1}{2}} \dt.
\end{align*}
Combining the above inequality with the convergences~\eqref{eq:convrhoeps} and~\eqref{eq:convjgrho} and the energy dissipation bound~\eqref{eq:energydiss}, and recalling that  $|\nabla \psi_\eps(t)|(\Omega)$ is uniformly bounded for almost all $t \in (0,T)$ by Lemma~\ref{lemma:Schaetzle}, we can apply the Frech\'et-Kolmogorov theorem to conclude that $(\psi_\eps)_{\eps>0}$ is relatively compact in $L^1(0,T;L_0^1(\Omega)),$ 
in particular $(\psi_\eps(t))_{\eps>0}$ is relatively compact in $L_0^1(\Omega)$ for almost all $t \in (0,T)$.
Notice that, if there are two subsequences
$\varepsilon_j^{(i)}\to 0$, $i=1,2$, such that
$
    \psi_{\varepsilon_j^{(i)}}(t)
    \to \psi^{(i)}
    \text{ in }L^1(\Omega)
$, then
$
    \psi^{(1)}=\psi^{(2)}
    \text{ in }L_0^1(\Omega),
$
meaning that
$
    \psi^{(1)}-\psi^{(2)}
    = \text{constant in }\Omega,
$
and therefore, since $\psi^{(i)}\in BV(\Omega;\{0,1\})$, either
$
    \psi^{(1)}=\psi^{(2)}
$
or
$
\{\psi^{(1)},\psi^{(2)}\}=\{0,1\}.
$
Using Lemma~\ref{lemma:Schaetzle}, we then obtain~\eqref{eq:convarphieps0}, i.e., up to a non-relabeled subsequence,
 \begin{equation*}
\varphi_{\varepsilon} \to\chi 
\quad\text{ strongly in } L^1(0,T;L^1_0(\Omega)).
  \end{equation*}

Let $R$ denote the set of such ``regular" times of $t \in [0,T]$ such that $(\psi_\eps(t))_{\eps>0}$ converges strongly in $L^1(\Omega)$. Observe that $R$ is measurable. Hence, for $t \in R$ also $\varphi_\eps(\cdot, t)\to \chi(\cdot, t)$ strongly in $L^1(\Omega)$ and
\begin{equation}\label{eq:convvarphitildechinew}
    \varphi_\varepsilon \to \chi \text{ in $L^1(R\times\Omega)$}.
\end{equation}
Moreover, for times $t\in R$, we can show that $\chi(\cdot, t)$ is the characteristic function of a set of finite perimeter. 
Indeed, since by~\eqref{eq:energydiss} and the well-preparedness of the initial data we have
 \begin{equation*}
 \sup_{\eps>0 } \esssup_{t\in [0,T]} \mathcal{E}_{\textnormal{int}, \eps}(\varphi_\eps{(\cdot, t)}) \leq C<\infty,
 \end{equation*}
 Fatou's lemma gives
 $\int_{\Omega} F (\chi(x,t))\, \dx   = 0$, which implies that $\chi(x,t) = \chi_{E}(x,t)$ for some set $E\subset \Omega\times (0,T).$ From the coarea formula we deduce that $E(t)$ is a set of finite perimeter in $\Omega$, where $E(t)$ is identified by $\chi_{E(t)}: = \chi_{E}(\cdot, t)$. 

 On the other hand, it can be shown that the weak limit $\chi$ in~\eqref{eq:weakconvvarphieps} satisfies $\chi(t)\equiv \text{const}$ whenever $t \notin R$. Similarly as in~\cite{Schaetzle}, using the weak convergence~\eqref{eq:weakconvvarphieps} together with Lemma~\ref{lemma:Schaetzle}, we obtain that for any measurable $I\subset [0,T]\setminus R$ and
$\xi\in C_c^1(\Omega;\mathbb{R}^d)$
\[
    \int_I\int_\Omega \chi\,\operatorname{div}\xi \, \dx \dt 
    =
    \lim_{\varepsilon\to 0}
    \int_I\int_\Omega
    \varphi_\varepsilon\,\operatorname{div}\xi \, \dx \dt 
    =
    \lim_{\varepsilon\to 0}
    \int_I\int_\Omega
    \psi_\varepsilon\,\operatorname{div}\xi \, \dx \dt 
    =
    0.
\]
The last equality follows by dominated convergence theorem from the fact that 
$
    \int_\Omega
    \psi_\varepsilon(t)\,\operatorname{div}\xi \, \dx
$
is bounded, and for every subsequence $\varepsilon\to 0$ there is a
subsequence $\varepsilon_j\to 0$ such that
$\psi_{\varepsilon_j}(t)\to c \in \{0, 1\}$ in $L^1(\Omega)$, hence
\[
    \lim_{j\to\infty}
    \int_\Omega
    \psi_{\varepsilon_j}(t)\,\operatorname{div}\xi \, \dx 
    =
    c\int_\Omega \operatorname{div}\xi \, \dx 
    =
    0.
\]
Since $\Omega$ is connected, $\chi(t)$ is constant for almost all
$t\notin R$.

\end{proof}

\subsubsection{Convergence of the velocity field}
In this subsection we prove the weighted convergence of the velocity field in the sense of Theorem~\ref{theo:sharplimit}$(iii)$.

\begin{proof} [Proof of Theorem~\ref{theo:sharplimit}(iii)]
Defining $\mathbf{v}_\varepsilon := \sqrt{\rho_\varepsilon} \mathbf{u}_\varepsilon $, it follows from~\eqref{eq:energydiss} that (up to a subsequence)
\begin{equation} \label{eq:limitveps}
\sqrt{\rho_\varepsilon} \mathbf{u}_\varepsilon = \mathbf{v}_\varepsilon \rightharpoonup \mathbf{v} \quad \text{weakly in } L^2(\Omega \times (0,T)).
\end{equation}
Since $\rho_\eps \geq c >0$ and hence $\rho \geq c >0$ by almost-everywhere convergence following from~\eqref{eq:convrhoeps}, we can define $\mathbf{u}:=\mathbf{v}/\sqrt{\rho}$ and obtain the desired convergence~\eqref{eq:convweigthedueps}.
In particular, we have
\begin{equation}\label{eq:lscdissip}
\int_0^T \int_\Omega \rho|\mathbf{u}|^2 \, \dx \dt\le \liminf_{\eps \to 0} \int_0^T\int_{\Omega} \rho_\varepsilon|\mathbf{u}_\varepsilon|^2 \, \dx \dt.
\end{equation}
Moreover, notice that the convergences~\eqref{eq:convrhoeps} and~\eqref{eq:limitveps} imply
\begin{equation} \label{eq:limitrhoueps}
 \rho_\varepsilon\mathbf{u}_\varepsilon \rightharpoonup \rho \mathbf{u} \quad \text{weakly in } L^{\frac{2p}{p+1}}(\Omega \times (0,T)), \quad 1 \leq p <m.
\end{equation}
\end{proof}

\subsection{Interfacial energy convergence}\label{sec:energyconvsharp}
In this section, we prove the convergence of the time-integrated interfacial phase-field energy, namely Theorem~\ref{theo:sharplimit}$(iv)$.

\begin{proof} [Proof of Theorem~\ref{theo:sharplimit}(iv)]
First, we recall that $(\rho_\varepsilon,\varphi_\varepsilon)$ satisfies the almost minimizing property~\eqref{eq:almostmin}, namely
\begin{align*}
    & \int_0^T\int_{\Omega} \Big ( \frac{\varepsilon}{2}|\nabla \varphi_\eps|^2 + \frac{1}{\varepsilon} F(\varphi_\eps) \Big ) \, \dx \dt +\int_0^T \int_{\Omega} \varphi_\eps  (\lambda-\rho_\eps)  \, \dx \dt \notag \\
    &\le \int_0^T\int_{\Omega} \Big ( \frac{\varepsilon}{2}|\nabla \tilde \varphi|^2 + \frac{1}{\varepsilon} F(\tilde \varphi) \Big ) \, \dx \dt
    +  \int_0^T\int_{\Omega} \tilde \varphi (\lambda- \rho_\eps)  \, \dx\dt +\varepsilon
\end{align*}
for every $\tilde \varphi \in L^1(0,T;H^1(\Omega))$.

Let $\tilde \chi(t)=\chi(t)$ if $t \in R$ and $\tilde \chi(t) \equiv c \in \{0,1\}$ if $t \notin R$.
Recall the convergences~\eqref{eq:convarphieps0} and~\eqref{eq:convvarphitildechinew}.
Since both $\chi(t)$ and $\tilde \chi(t)$ are constant when $t \notin R$ and they coincide when $t \in R$, we obtain that $|\nabla  \chi| = |\nabla  \tilde \chi|$ almost everywhere in $\Omega \times (0,T)$. Hence, Fatou's lemma together with the Modica--Mortola liminf inequality (cf.~\cite{ModicaMortola}) yields
\begin{align*} 
    &\sigma\int_0^T\int_{\Omega} 1\,\mathrm{d}|\nabla \tilde \chi| \dt=  \sigma \int_{R} \int_{\Omega} 1\,\mathrm{d}|\nabla \tilde \chi| \dt\\
    & 
    \le \liminf_{\varepsilon\to 0} \int_R \int_{\Omega}\Big (\frac{\varepsilon}{2}|\nabla \varphi_\eps|^2 + \frac{1}{\varepsilon} F(\varphi_\eps) \Big ) \dx \dt\\
    &
    \le \liminf_{\varepsilon\to 0} \int_0^T \int_{\Omega}\Big (\frac{\varepsilon}{2}|\nabla \varphi_\eps|^2 + \frac{1}{\varepsilon} F(\varphi_\eps) \Big ) \dx \dt,
\end{align*}
where $\sigma := \int_0^1\sqrt{2F(s)}\,\mathrm{d}s$, whereas the convergences~\eqref{eq:convrhoeps} and~\eqref{eq:weakconvvarphieps} give
\begin{align*}
    \int_0^T\int_{\Omega} \chi(\lambda -\rho)\, \dx \dt
    = \lim_{\varepsilon\to 0} \int_0^T \int_{\Omega}\varphi_\varepsilon(\lambda -\rho_\varepsilon)\, \dx \dt .
\end{align*}

Let $\eta \in L^1(0,T;BV(\Omega;\{0,1\}))$, there exists $(\eta_\varepsilon)_\varepsilon \subset L^1(0,T;H^1(\Omega))$ such that $\eta_\varepsilon \to \eta$ strongly in $L^1(\Omega \times (0,T))$ and $\int_0^T\int_{\Omega} \left( \frac{\varepsilon}{2}|\nabla \eta_\eps|^2 + \frac{1}{\varepsilon} F(\eta_\eps) \right) \, \dx \dt \to \sigma \int_0^T\int_{\Omega} 1\,\mathrm{d}|\nabla \eta| \dt$ (cf.~\cite{ModicaMortola}). Additionally, $\eta_\varepsilon \rightharpoonup \eta$ weakly in $L^4(\Omega \times (0,T))$.
Hence, using the convergence~\eqref{eq:convrhoeps} together with the almost-minimizing property~\eqref{eq:almostmin}, we obtain
\begin{align*}
    & \limsup_{\varepsilon\to 0} \int_0^T \int_{\Omega}\Big (\frac{\varepsilon}{2}|\nabla \varphi_\eps|^2 + \frac{1}{\varepsilon} F(\varphi_\eps) +\varphi_\varepsilon (\lambda -\rho_\varepsilon)\Big ) \dx \dt \\
    &\le \limsup_{\varepsilon\to 0} \int_0^T \int_{\Omega}\Big (\frac{\varepsilon}{2}|\nabla \eta_\eps|^2 + \frac{1}{\varepsilon} F(\eta_\eps) +\eta_\varepsilon (\lambda -\rho_\varepsilon)\Big ) \dx \dt+\varepsilon\\
    &=\int_0^T \Big(\sigma \int_{\Omega} 1\,\mathrm{d}|\nabla \eta|+
    \int_{\Omega} \eta(\lambda-\rho) \, \dx \Big ) \dt.
\end{align*}
In particular, considering $\eta = \tilde \chi$ with the particular choice $\tilde \chi(t)=\chi(t)$ for $t \in R$ and
\begin{equation}\label{eq:tildechi}
\tilde \chi (t) \equiv \begin{cases}
    0 & \text{if } \lambda |\Omega|-1>0\\
    1 & \text{if } \lambda |\Omega|-1<0 \\
    c \in \{0,1\} & \text{if } \lambda |\Omega|-1=0
\end{cases}, \quad \text{ for }  t \notin R,
\end{equation} we have
\begin{align*}
    & \limsup_{\varepsilon\to 0} \int_0^T \int_{\Omega}\Big (\frac{\varepsilon}{2}|\nabla \varphi_\eps|^2 + \frac{1}{\varepsilon} F(\varphi_\eps) +\varphi_\varepsilon (\lambda -\rho_\varepsilon)\Big ) \dx \dt \\
    &\le \int_0^T \Big(\sigma \int_{\Omega} 1\,\mathrm{d}|\nabla \tilde \chi|+
    \int_{\Omega} \tilde \chi(\lambda-\rho) \, \dx \Big ) \dt\\
    &\le \int_0^T \Big(\sigma \int_{\Omega} 1\,\mathrm{d}|\nabla \tilde \chi|+
    \int_{\Omega}  \chi(\lambda-\rho) \, \dx \Big ) \dt.
\end{align*}

From the inequalities above one can deduce
\begin{equation*}
    \sigma\int_0^T \int_{\Omega} 1 \, \mathrm{d}|\nabla \tilde \chi|\le \lim_{\varepsilon \to 0}\int_0^T \int_{\Omega} \frac{\varepsilon}{2}|\nabla \varphi_\eps|^2 + \frac{1}{\varepsilon} F(\varphi_\eps) \,  \dx \dt \le     \sigma\int_0^T \int_{\Omega} 1 \, \mathrm{d}|\nabla  \tilde \chi|,
\end{equation*}
hence we conclude
\begin{equation*}
    \lim_{\varepsilon \to 0}\int_0^T \int_{\Omega} \frac{\varepsilon}{2}|\nabla \varphi_\eps|^2 + \frac{1}{\varepsilon} F(\varphi_\eps) \,  \dx \dt =     \sigma\int_0^T \int_{\Omega} 1 \, \mathrm{d}|\nabla \tilde \chi|.
\end{equation*}
\end{proof}

\subsection{Convergence to the Verigin problem with phase transition}\label{sec:limitsharp}
In this section, we prove the item $(v)$ of Theorem~\ref{theo:sharplimit}.

\begin{proof} [Proof of Theorem~\ref{theo:sharplimit}(v)]
We divide the proof into several steps.

\textit{Step 1 (Transport equation).} The convergences~\eqref{eq:convrhoeps0},~\eqref{eq:convrhoeps} and~\eqref{eq:limitrhoueps} suffice to pass to the limit as $\eps \to 0$ in~\eqref{eq:transport}, obtaining~\eqref{eq:transportsharp}.

\textit{Step 2 (Young-Laplace law).} The left-hand side of~\eqref{eq:weaksol} passes to the desired limit due to~\eqref{eq:limitrhoueps}.

To pass to the limit in the second term on the right-hand side of~\eqref{eq:weaksol} we need to show that
\begin{equation*}
    \mathbb{T}_\eps \, \dx \dt \overset{\ast}{\rightharpoonup} \sigma (\operatorname{Id} - \nu \otimes \nu ) \, \mathrm{d} |\nabla \tilde \chi| \dt 
    \quad \text{ weakly-* as measures,}
\end{equation*}
where we recall that 
$$
 \mathbb{T}_\eps = \Big(\frac{\eps}{2} |\nabla \varphi_\eps|^2 + \frac{1}{\eps} F(\varphi_\eps)  \Big) \operatorname{Id}  - \eps \nabla \varphi_\eps\otimes  \nabla \varphi_\eps
$$
and that $\nu $ denotes the Radon-Nikodym derivative $-\frac{ \mathrm{d} \nabla \tilde\chi}{\mathrm{d} |\nabla \tilde\chi|}$.

We first define $$
\Psi_\eps := \Psi (\varphi_\eps) = \int_0^{\varphi_\eps} \sqrt{2F(s)} \, \mathrm{d} s,$$
then we test $\frac{\eps}{2} |\nabla \varphi_\eps|^2 + \frac{1}{\eps} F(\varphi_\eps)  $ with $\eta $ such that $\eta \in C_c(\Omega \times (0,T))$ and $\eta \ge 0$, obtaining 
\begin{align*}
    &\liminf_{\eps \to 0} \int_0^T \int_\Omega \eta\Big(\frac{\eps}{2} |\nabla \varphi_\eps|^2 + \frac{1}{\eps} F(\varphi_\eps)  \Big)  \, \dx \dt \\
    &\geq
 \liminf_{\eps \to 0} \int_0^T \int_\Omega \eta |\nabla \Psi_\eps|  \, \dx \dt \geq
 \liminf_{\eps \to 0} \int_R \int_\Omega \eta |\nabla \Psi_\eps|  \, \dx \dt\\
 &\geq  
    \sigma \int_R \int_\Omega \eta \, \mathrm{d} |\nabla \tilde \chi| \dt =  
    \sigma \int_0^T \int_\Omega \eta \, \mathrm{d} |\nabla \tilde \chi| \dt,
\end{align*}
due to Fatou's lemma, the lower semicontinuity of the total variation measure, and the strong convergence $\Psi_\eps \to \Psi(\tilde \chi) = \sigma \tilde \chi$ in $L^1(\Omega)$ for almost every $t \in R$ following from~\eqref{eq:convvarphitildechinew}. 
Similarly, we can test with $\|\eta\|_{\infty}-\eta $ and obtain the same inequalities with $\|\eta\|_{\infty}-\eta $ (instead of $\eta$).
These inequalities combined with~\eqref{eq:energyconv} yield 
\begin{equation*}
\int_0^T \int_\Omega  \eta\Big(\frac{\eps}{2} |\nabla \varphi_\eps|^2 + \frac{1}{\eps} F(\varphi_\eps)  \Big) \, \dx \dt \to \sigma \int_0^T \int_\Omega \eta \, \mathrm{d} |\nabla \tilde \chi| \dt 
\end{equation*}
and 
\begin{equation*}
\int_0^T \int_\Omega  \eta|\nabla \Psi_\eps|  \, \dx \dt \to \sigma \int_0^T \int_\Omega \eta \, \mathrm{d} |\nabla \tilde \chi| \dt 
\end{equation*}
for every $\eta \in C_c(\Omega \times (0,T))$.
In particular, it holds 
\begin{equation*}
   \Big(\frac{\eps}{2} |\nabla \varphi_\eps|^2 + \frac{1}{\eps} F(\varphi_\eps)  \Big) \, \dx \dt  \overset{\ast}{\rightharpoonup} \sigma  \, \mathrm{d} |\nabla \tilde \chi|  \,\dt 
    \quad \text{ weakly-* as measures}
\end{equation*}
and
\begin{equation*}
   |\nabla \Psi_\eps| \, \dx \dt   \overset{\ast}{\rightharpoonup} \sigma  \, \mathrm{d} |\nabla \tilde \chi| \, \dt
    \quad \text{ weakly-* as measures.}
\end{equation*}
Furthermore, one can deduce the equipartition of energy, namely
\begin{equation*}
  \int_0^T \int_\Omega \Big(\sqrt{\eps} |\nabla \varphi_\eps| - \frac{1}{\sqrt{\eps}} \sqrt{2F(\varphi_\eps) } \Big)^2  \dx \dt  \to 0,
\end{equation*}
and 
\begin{equation*} 
   \eps |\nabla \varphi_\eps|^2 \, \dx \dt - |\nabla \Psi_\eps| \, \dx \dt  \overset{\ast}{\rightharpoonup} 0
    \quad \text{ weakly-* as measures}.
\end{equation*}
For further details on the proof of these convergences, we refer, for instance, to~\cite{LuckhausModica}.

Recalling~\eqref{eq:energyconv}, an application of Reshetnyak's continuity theorem~\cite[Theorem 2.39]{AFP} to the sequence of measures $\nabla \Psi_\eps \, \dx  \dt $ with the
1-homogeneous function $f(x, p) := \frac{p \otimes p}{|p|}: \nabla \xi(x) $ yields
\begin{align*}
    \lim_{\eps \to 0} \int_0^T\int_\Omega \frac{\nabla \varphi_\eps}{|\nabla \varphi_\eps|} \otimes \frac{\nabla \varphi_\eps}{|\nabla \varphi_\eps|}     : \nabla \xi |\nabla \Psi_\eps | \, \dx  \dt 
    = \sigma \int_0^T\int_\Omega \nu\otimes \nu : \nabla \xi \,  \mathrm{d} |\nabla \tilde \chi | \dt,
\end{align*}
whence we obtain
\begin{align*}
    \lim_{\eps \to 0} \int_0^T \int_\Omega \eps {\nabla \varphi_\eps} \otimes {\nabla \varphi_\eps}    : \nabla \xi \, \dx  \dt 
    = \sigma \int_0^T \int_\Omega \nu\otimes \nu : \nabla \xi \, \mathrm{d} |\nabla \tilde \chi| \dt.
\end{align*}

Finally, the last terms on the right-hand side of~\eqref{eq:weaksol} pass to the desired limit due to~\eqref{eq:weakconvrhoeps} and~\eqref{eq:weakconvvarphieps}. Recalling that  the limit function $\chi$ is constant when $t \notin R$, hence we have $\int_{\Omega} \lambda \chi(t) \mathrm{div} \xi(t) \, \dx = 0 = \int_{\Omega} \lambda \tilde \chi(t) \mathrm{div} \xi(t) \, \dx$.

\emph{Step 3 (Energy dissipation law).}
We first establish the pointwise lower-semicontinuity estimate
\begin{equation}
\label{eq:pointwise-energy-liminf}
\mathcal{E}(\rho(\tau),\tilde\chi(\tau))
\leq
\liminf_{\varepsilon\to0}
\mathcal{E}_\varepsilon(\rho_\varepsilon(\tau),\varphi_\varepsilon(\tau))
\end{equation}
for almost every $\tau\in(0,T)$.

By the strong convergence~\eqref{eq:convrhoeps} and the energy dissipation~\eqref{eq:energydiss}, we have
\begin{equation}
\label{eq:slice-weak-rho}
\rho_\varepsilon(\tau)\to\rho(\tau)
\quad \text{strongly in }L^1(\Omega), \quad \rho_\varepsilon(\tau)\rightharpoonup\rho(\tau)
\quad \text{weakly in }L^m(\Omega),
\end{equation}
for almost every $\tau\in(0,T)$.
Let $\tau\in R$ be such that~\eqref{eq:slice-weak-rho} holds. The definition of $R$ from Section~\ref{sec:compvarphieps} together with the uniform energy bound yield
$$
\varphi_\varepsilon(\tau)\to
\tilde\chi(\tau)
\quad \text{strongly in }L^2(\Omega).
$$
Hence, since $m\geq2$, we obtain
$$
\int_\Omega
\varphi_\varepsilon(\tau)
(\lambda-\rho_\varepsilon(\tau))\, \dx
\to
\int_\Omega
\tilde\chi(\tau)
(\lambda-\rho(\tau))\, \dx.
$$
Furthermore, the weak lower semicontinuity from~\eqref{eq:slice-weak-rho} gives
$$
\int_\Omega \rho(\tau)^m\,\dx
\leq
\liminf_{\varepsilon\to0}
\int_\Omega \rho_\varepsilon(\tau)^m\, \dx,
$$
whereas the Modica--Mortola lower bound (cf.~\cite{ModicaMortola}) gives
$$
\sigma |\nabla\tilde\chi(\tau)|(\Omega)
\leq
\liminf_{\varepsilon\to0}
E_{\mathrm{int},\varepsilon}
\bigl(\varphi_\varepsilon(\tau)\bigr).
$$
Therefore,~\eqref{eq:pointwise-energy-liminf} holds for almost every
$\tau\in R$.

Consider now $\tau\notin R$ and a subsequence realizing the liminf in~\eqref{eq:pointwise-energy-liminf}. By the compactness argument used in Section~\ref{sec:compvarphieps}, we may extract a further subsequence such that
$$
\varphi_{\varepsilon_k}(\tau)\to c
\qquad\text{strongly in }L^2(\Omega)
$$
for some $c\in\{0,1\}$. Hence, using~\eqref{eq:slice-weak-rho}, we obtain
$$
\int_\Omega
\varphi_{\varepsilon_k}(\tau)
(\lambda-\rho_{\varepsilon_k}(\tau))\,\dx
\rightarrow
c(\lambda|\Omega|-1).
$$
Recalling the definition of $\tilde \chi$ on $(0,T)\setminus R$ given by~\eqref{eq:tildechi}, since $|\nabla \tilde \chi(\tau)|(\Omega)=0$ for $\tau\notin R$, we have 
$$
\begin{aligned}
\liminf_{\varepsilon\to0}
\mathcal{E}_\varepsilon(\rho_\varepsilon(\tau),\varphi_\varepsilon(\tau))
&\geq
\frac{1}{m-1}\int_\Omega\rho(\tau)^m\, \dx
+c\bigl(\lambda|\Omega|-1\bigr)
\\
&\ge
\mathcal{E}(\rho(\tau),\tilde\chi(\tau)),
\end{aligned}
$$
This proves~\eqref{eq:pointwise-energy-liminf} for almost every
$\tau\in(0,T)$.

Finally, from the weak convergence~\eqref{eq:convweigthedueps} it follows that
$$
\int_0^\tau\int_\Omega
\rho|\mathbf{u}|^2\,\dx\dt
\leq
\liminf_{\varepsilon\to0}
\int_0^\tau\int_\Omega
\rho_\varepsilon|\mathbf{u}_\varepsilon|^2\,\dx\dt
$$
for every $\tau\in(0,T)$.
Combining this inequality with~\eqref{eq:pointwise-energy-liminf} and the
energy dissipation inequality~\eqref{eq:energydiss}, we can conclude that, for almost every
$\tau\in(0,T)$,
$$
\begin{aligned}
\mathcal{E}(\rho(\tau),\tilde\chi(\tau))
+\int_0^\tau\!\!\int_\Omega\rho|\mathbf{u}|^2\,\dx\dt
&\leq
\liminf_{\varepsilon\to0}
\left(
\mathcal{E}_\varepsilon(\rho_\varepsilon(\tau),\varphi_\varepsilon(\tau))
+
\int_0^\tau\!\!\int_\Omega
\rho_\varepsilon|\mathbf{u}_\varepsilon|^2\,dxdt
\right)
\\
&\leq
\lim_{\varepsilon\to0}
\mathcal{E}_\varepsilon(\rho_{\varepsilon,0},\varphi_{\varepsilon,0})
=
\mathcal{E}(\rho_0,\chi_{E_0}),
\end{aligned}
$$
where the last equality follows from the well-preparedness of the initial data
\eqref{eq:convrhoeps0}--\eqref{eq:conveneps0}.
\end{proof}

\appendix 

\section{Compactness in measure} \label{appendixA}
In this section, we state a variant of the Aubin--Lions
Lemma, which can be found in~\cite[Theorem 2]{Rossi-Savare}.

Let $B$ be a separable Banach space. We say that $\mathcal{F}:B \to [0,+\infty]$ is a 
\emph{coercive integrand} if
\[
\{v \in B : \mathcal{F}(v) \le C\} \text{ is compact for every } C \ge 0.
\] 

We denote by $\mathcal{M}(0,T;B)$ the space of Bochner-measurable $B$-valued functions with the topology induced by the convergence in measure, i.e. $(u_n)_{n \in \mathbb{N}} \subset \mathcal{M}(0,T;B)$ converges in measure to $u \in \mathcal{M}(0,T;B)$ as $n \to \infty$ if
\[
\lim_{n \to \infty} |\{t \in (0,T) :\|u_n(t)-u(t)\|_{B} \ge \sigma \}| = 0 \,\, \forall \sigma >0.
\]

\begin{theorem}\label{theo:RossiSava}
    Let $\mathcal{U}$ be a family of measurable $B$-valued functions; if there exist a lower semicontinuous coercive integrand $\mathcal{F}: B \to [0,+\infty]$ and a distance $d:B \times B \to [0,+\infty]$, such that
    \[
    \mathcal{U} \text{ is tight w.r.t } \mathcal{F}, \text{ i.e. } \quad S:=\sup_{u \in \mathcal{U}} \int_0^T \mathcal{F}(u(t)) \, \dt < \infty,
    \]
    and
    \begin{equation}\label{eq:rossisav}
    \lim_{\tau \to 0} \sup_{u \in \mathcal{U}} \int_0^{T-\tau} d(u(t+\tau),u(t))\, \dt =0,
    \end{equation}
    then $\mathcal{U}$ is relatively compact in $\mathcal{M}(0,T;B)$.
\end{theorem}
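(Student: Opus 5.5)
This is the Rossi--Savar\'e compactness theorem, a measure-valued Aubin--Lions lemma. I would prove it by reducing relative compactness in measure to a pointwise-in-time compactness statement, in the spirit of Helly's selection theorem, with the coercive integrand supplying compactness in space.

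\emph{Step 1: tightness gives compactness in space.} Fix a sequence $(u_n)\subset\mathcal U$. By tightness and Chebyshev,
\[
|\{t\in(0,T):\mathcal F(u_n(t))>C\}|\le S/C .
\]
So for every $\eta>0$ there is a compact sublevel set $K_C=\{\mathcal F\le C\}$ such that $u_n(t)\in K_C$ outside a set of measure at most $\eta$, uniformly in $n$.

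\emph{Step 2: reduction to compactness of time averages.} Condition~\eqref{eq:rossisav} controls the averaged oscillation in time. I would pass from it to pointwise control via the Fatou-type functional
\[
\mathcal F_T(u):=\int_0^T\mathcal F(u(t))\,\dt,
\]
as in Rossi--Savar\'e. I would first replace $d$ by the bounded distance $d\wedge 1$, which leaves~\eqref{eq:rossisav} intact. The aim is to show that the sequence is Cauchy in measure with respect to $d$ along a subsequence. For this, consider the parametrized Young measures $(\nu_t)$ generated by $(u_n)$. Tightness together with the compact sublevels of $\mathcal F$, using the lower semicontinuity of $\mathcal F$ and separability of $B$, gives existence of a limit Young measure concentrated on $K_C$-type sets for a.e.\ $t$. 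The conclusion then reduces to showing that $\nu_t$ is a Dirac mass for a.e.\ $t$: a Young measure limit that is a.e.\ Dirac is equivalent to convergence in measure.

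\emph{Step 3: Dirac property from the time-shift condition (main obstacle).} Take the product Young measure generated by $(u_n(t),u_n(t+\tau))$. Lower semicontinuity of $d$ on compact sets passes~\eqref{eq:rossisav} to the limit:
\[
\int_0^{T-\tau}\!\!\iint d(x,y)\,\mathrm d\boldsymbol\nu^\tau_t(x,y)\,\dt\le\omega(\tau)\to0 .
\]
Then I would let $\tau\to0$. By Lebesgue-point arguments in $t$ for the continuous bounded test functions $d\wedge1$ on the compact support, $\boldsymbol\nu^\tau_t$ converges to a measure whose marginals are both $\nu_t$ and which is concentrated on the diagonal. Moreover it inherits a ``self-coupling'' structure that forces $\iint d\,\mathrm d(\nu_t\otimes\nu_t)=0$, so $\nu_t$ is Dirac.

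This last identification is where I expect the real work. It uses the fact that for $\tau$ not in a null set the shifted pair asymptotically decorrelates on fine time scales, so the joint limit is $\nu_t\otimes\nu_t$. I would follow the original argument of Rossi--Savar\'e: test against products $\zeta(t)\,g(x)\,h(y)$ and average in $\tau$ over $(0,\tau_0)$. Finally, I would conclude that $u_n$ converges in measure to the barycentre selection $u(t)=\mathrm{supp}\,\nu_t$, which is measurable by measurable selection.
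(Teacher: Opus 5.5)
The paper gives no proof of this statement; it quotes it from Rossi--Savar\'e, whose argument is the Young-measure one you set up. Your Steps 1--2 are fine: tightness gives a limit Young measure $\nu$ concentrated on sublevels of $\mathcal F$, and it suffices to show that $\nu_t$ is a Dirac mass for a.e.\ $t$.

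The gap is in Step 3. You pass to the limit $n\to\infty$ at a fixed shift $\tau$ and then let $\tau\to0$. This only produces a coupling of $\nu_t$ with itself concentrated on the diagonal, namely $(\operatorname{Id},\operatorname{Id})_\#\nu_t$. Such a coupling exists for every $\nu_t$, so nothing you have derived ``forces'' $\iint d\,\mathrm d(\nu_t\otimes\nu_t)=0$.

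Claiming decorrelation at a.e.\ fixed shift does not repair this, because that claim is false. Take $u_n(t)=f(nt)$ with $f$ continuous, $1$-periodic and nonconstant, along the subsequence $n_k=k!$. For every rational $\tau=p/q$ one has $u_{n_k}(\cdot+\tau)=u_{n_k}$ for $k\ge q$. So every fixed-shift pair Young measure along this subsequence is exactly diagonal with zero cost, while $\nu_t\equiv f_\#(\mathcal L^1\llcorner(0,1))$ is not a Dirac mass. This sequence violates \eqref{eq:rossisav} at other shifts. Still, it shows that shift-by-shift limits over a countable family of shifts (all you can diagonalize over) cannot give the conclusion.

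The missing idea is to average over the shift \emph{before} letting $n\to\infty$, treating the shift as a second time variable.
\begin{itemize}
\item The sequence $(t,s)\mapsto(u_n(t),u_n(s))$ on $(0,T)^2$ generates the product Young measure $\nu_t\otimes\nu_s$. Tested against $\zeta_1(t)\zeta_2(s)g(x)h(y)$ the integrals factorize, and such tensor products are dense among test integrands on $K_C\times K_C$.
\item Lower semicontinuity of $d$ on the strip $\{0<s-t<\tau_0\}$ then gives
\[
\frac1{\tau_0}\int_0^{\tau_0}\!\!\int_0^{T-\tau}\!\!\iint d(x,y)\,\mathrm d\nu_{t+\tau}(x)\,\mathrm d\nu_t(y)\,\mathrm dt\,\mathrm d\tau\le\sup_{\tau\le\tau_0}\sup_{n}\int_0^{T-\tau}d(u_n(t+\tau),u_n(t))\,\mathrm dt .
\]
\item Let $\tau_0\to0$, using continuity of translations of $t\mapsto\nu_t$ as Young measures and lower semicontinuity of $d$ once more. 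This yields $\int_0^T\iint d\,\mathrm d(\nu_t\otimes\nu_t)\,\mathrm dt=0$.
\item Hence $(\nu_t\otimes\nu_t)(\{x=y\})=\sum_x\nu_t(\{x\})^2=1$, so $\nu_t$ is a Dirac mass for a.e.\ $t$.
\end{itemize}
This is exactly the step your proposal defers to the reference, and it is where the product structure comes from.

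Finally, you use lower semicontinuity of $d$, which is not among the hypotheses as printed here; Rossi--Savar\'e do assume their $g$ is l.s.c. It cannot be dropped. Take $B=\mathbb R$, $\mathcal F(x)=|x|$, and $d(x,y)=|\Phi(x)-\Phi(y)|$ with $\Phi$ injective and $|\Phi(1+\frac1n)-\Phi(\frac1n)|\le n^{-2}$. The functions $u_n=\frac1n+\mathbf 1_{\{\sin(nt)>0\}}$ satisfy both hypotheses, since the shift integral is at most $(T+2)\tau$. Yet no subsequence of $(u_n)$ converges in measure.
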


\section{Technical tools} \label{appendixB}
In this section, we collect two technical lemmas from~\cite{Schaetzle} that are needed in Section~\ref{sec:compvarphieps} to establish the strong convergence of the phase-field functions as $\eps \to 0$.

\begin{lemma}{\cite[Lemma 5.1]{Schaetzle} }\label{lemma:Schaetzle}
For $\varphi \in W^{1, 2}(\Omega)$ and $C> 0$ with $\mathcal{E}_{\textnormal{int},\varepsilon}(\varphi) \le C$, there exists $\psi \in BV(\Omega; \{0,1\})$ satisfying $|\nabla \psi|(\Omega) \le C$ and
\[
\|\varphi - \psi\|_{L^2(\Omega)} \le \omega_0(\varepsilon),
\]
where $\omega_0 = \omega_{C, \Omega}$ satisfies $\lim_{\varepsilon \downarrow 0} \omega_0(\varepsilon) = 0$.
\end{lemma}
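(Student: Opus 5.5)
The plan is a level-set truncation combined with the Modica--Mortola trick and the coarea formula. Set $\Psi(s):=\int_0^s\sqrt{2F(r)}\,\mathrm{d}r$, which is strictly increasing with $\Psi(1)=\sigma$. Since $\varphi\in W^{1,2}(\Omega)$ and $\Psi$ is $C^1$, the chain rule gives $\Psi(\varphi)\in W^{1,1}_{\mathrm{loc}}$ with $\nabla\Psi(\varphi)=\sqrt{2F(\varphi)}\nabla\varphi$. Young's inequality gives
\[
\int_\Omega|\nabla \Psi(\varphi)|\,\dx\le\int_\Omega \frac\eps2|\nabla\varphi|^2+\frac1\eps F(\varphi)\,\dx\le C .
\]

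\emph{Step 1: choosing a level.} By the coarea formula for $\Psi(\varphi)$,
\[
\int_{\mathbb{R}}\mathcal{H}^{d-1}\big(\partial^*\{\Psi(\varphi)>s\}\cap\Omega\big)\,\mathrm{d}s\le C .
\]
Since $\{\Psi(\varphi)>s\}=\{\varphi>\Psi^{-1}(s)\}$, we restrict to $s\in[\Psi(1/4),\Psi(3/4)]$, an interval of length $\sigma'>0$ depending only on $F$. There is then a level $t\in[1/4,3/4]$ such that $E_t:=\{\varphi>t\}$ has finite perimeter with $|\nabla\chi_{E_t}|(\Omega)\le C/\sigma'$. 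Set $\psi:=\chi_{E_t}\in BV(\Omega;\{0,1\})$. The perimeter bound holds up to a multiplicative constant depending only on $F$, which can be absorbed into $C$; this is how I would read ``$\le C$'' in the statement. If the sharp constant is wanted, one averages over $t\in(0,1)$ to get $C/\sigma$ instead, at the price of a worse but still explicit $L^2$ estimate in Step~2. This constant bookkeeping is the only delicate point.

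\emph{Step 2: pointwise $L^2$ bound.} Where $\varphi\le t\le 3/4$ we have $\psi=0$ and $(\varphi-1)^2\ge 1/16$, so
\[
|\varphi-\psi|^2=\varphi^2\le 64\,F(\varphi).
\]
Where $\varphi>t\ge1/4$ we have $\psi=1$ and $\varphi^2\ge1/16$, so
\[
|\varphi-\psi|^2=(\varphi-1)^2\le 64\,F(\varphi).
\]
Integrating over $\Omega$,
\[
\|\varphi-\psi\|_{L^2(\Omega)}^2\le 64\int_\Omega F(\varphi)\,\dx\le 64\,C\,\eps .
\]
Hence we may take $\omega_0(\eps):=8\sqrt{C\eps}$, which tends to $0$ as $\eps\downarrow0$ and depends only on $C$. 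In fact it does not depend on $\Omega$ at all.

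\emph{Remaining check.} The only points to verify are that the coarea formula applies to $\Psi(\varphi)$ and that a.e. superlevel set has finite perimeter. Both are standard, since $\Psi(\varphi)$ is a $BV$ function. To see this, note that $\Psi$ has quartic growth, while $\varphi\in L^4$ because $\int F(\varphi)<\infty$, so $\Psi(\varphi)\in L^1$; together with the gradient bound above this gives $\Psi(\varphi)\in BV(\Omega)$. No regularity of $\Omega$ beyond what is already assumed is needed.
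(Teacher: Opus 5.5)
Your proof is correct, up to the perimeter constant discussed below, and it takes a genuinely different route from the paper. The paper argues by contradiction. It takes a sequence $\varphi_j$ with $\mathcal{E}_{\textnormal{int},\eps_j}(\varphi_j)\le C$ that stays $\delta$-far in $L^2$ from every admissible $\psi$. Using $BV$-compactness of $\Psi(\varphi_j)$ in $L^1(\Omega)$, strict monotonicity of $\Psi$, the $L^4$ bound and Fatou's lemma (which gives $\int_\Omega F(\chi)\,\dx=0$), it shows $\varphi_j\to\chi\in BV(\Omega;\{0,1\})$ in $L^2$, and $\chi$ then serves as the contradicting competitor. That argument gives no rate, and $\omega_0$ depends on $\Omega$ through the compact embedding $BV(\Omega)\hookrightarrow L^1(\Omega)$.

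Your coarea selection of a level $t\in[1/4,3/4]$ is constructive, since $\psi$ is a superlevel set of $\varphi$ itself. It gives the explicit rate $\omega_0(\eps)=8\sqrt{C\eps}$ and never uses boundedness or regularity of $\Omega$, so it would also cover the case $\Omega=\mathbb{R}^d$ raised in the paper's final remark. Your pointwise bound $|\varphi-\psi|^2\le 64F(\varphi)$ holds on all of $\Omega$, including where $\varphi<0$ or $\varphi>1$.

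What the compactness argument buys is the sharp perimeter constant. Your worry about ``$\le C$'' points to a defect of the statement, not of your proof. Here $\sigma=\int_0^1\sqrt{2F}=1/(6\sqrt2)<1$, and the literal bound $|\nabla\psi|(\Omega)\le C$ cannot hold. To see this, take a Modica--Mortola recovery sequence $\varphi_\eps\to\chi_E$ for a set with $C<|\nabla\chi_E|(\Omega)<C/\sigma$. Then $\mathcal{E}_{\textnormal{int},\eps}(\varphi_\eps)\le C$ for small $\eps$, any $\psi_\eps$ as in the lemma converges to $\chi_E$ in $L^1$, and lower semicontinuity would force $|\nabla\chi_E|(\Omega)\le C$, a contradiction.

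The paper's proof in fact only yields $\sigma|\nabla\chi|(\Omega)=|\nabla\bar\Psi|(\Omega)\le C$, that is, the sharp bound $C/\sigma$; its final ``$\le C$'' is a slip. Yours gives $C/\sigma'$ with $\sigma'=\Psi(3/4)-\Psi(1/4)=11/(96\sqrt2)$. The lemma is only used to obtain a uniform perimeter bound on $\psi_\eps(t)$ together with $\omega_0(\eps)\to0$, so your version suffices for the paper's purposes.

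Two small corrections.
\begin{itemize}
\item $\Psi$ has cubic, not quartic, growth. This is harmless, since $\varphi\in L^4$ still gives $\Psi(\varphi)\in L^{4/3}$.
\item Averaging over all $t\in(0,1)$ does not give an $\eps$-uniform $L^2$ estimate. The selected level may lie arbitrarily close to $0$ or $1$, where $F(\varphi)$ no longer controls $|\varphi-\psi|^2$. Restricting to $t\in[\eta,1-\eta]$ gives the perimeter bound $C/\sigma_\eta$, with $\sigma_\eta:=\Psi(1-\eta)-\Psi(\eta)\uparrow\sigma$, and the error $2\eta^{-1}\sqrt{C\eps}$. This gets arbitrarily close to $C/\sigma$, but the exact constant comes only from the compactness argument.
\end{itemize}
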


We provide the proof for the reader's convenience.

\begin{proof}
Assume, by contradiction, that the statement is false. Then there exist $\delta > 0$, a subsequence
$\varepsilon_j \downarrow 0$, and $(\varphi_j)_{j \in \mathbb{N}} \subset H^1(\Omega)$ such that
$\mathcal{E}_{\textnormal{int},\varepsilon_j}(\varphi_j) \leq C$ and
\begin{equation}
  \|\varphi_j - \psi\|_{L^2(\Omega)} \geq \delta
  \label{eq:assurdo}
\end{equation}
for every $j \in \mathbb{N}$ and $\psi \in BV(\Omega;\{0,1\})$ with $ |\nabla \psi|(\Omega) \leq C$. 

Define
\[
  \Psi(s) :=  \int_0^s \sqrt{2F(s)} \, ds
  \quad \text{ and } \quad \Psi_j := \Psi(\varphi_j),
\]
then we have that
$\|\Psi_j\|_{L^1(\Omega)} \leq C$ and
\[
  \int_\Omega |\nabla \Psi_j| \, \dx 
  =
   \int_\Omega \sqrt{2F(\varphi_j)} \, |\nabla \varphi_j|\, \dx 
  \leq \mathcal{E}_{\textnormal{int},\varepsilon_j}(\varphi_j)\leq C.
\]
Therefore, up to a subsequence, we deduce that
$(\Psi_j)_{j\in \mathbb{N}}$ converges strongly in $L^1(\Omega)$ and 
almost everywhere on $\Omega$ to a function $\bar \Psi \in BV(\Omega)$
with $\int_\Omega 1 \, \mathrm{d} |\nabla \bar \Psi|  \leq C$. 
Since $\Psi$ is
strictly increasing, $(\varphi_j)_{j \in \mathbb{N}}$
converges almost everywhere on $\Omega$ to
$ \chi= \Psi^{-1}(\bar \Psi)$. Moreover, from the uniform bound
$\|\varphi_j\|_{L^4(\Omega)} \leq C$ we obtain that
$\varphi_j \to \chi$ strongly in $L^2(\Omega)$. By Fatou's lemma, 
\[
  \int_\Omega F(\chi) \, \dx 
  \leq
  \liminf_{j \to \infty} \int_\Omega F(\varphi_j) \, \dx 
  \leq
  \liminf_{j \to \infty}
  \varepsilon_j \mathcal{E}_{\textnormal{int},\varepsilon_j}(\varphi_j)
  =
  0,
\]
hence $\chi \in \{0,1\}$. Since $\Psi(0) = 0$ and $\Psi(1) = \sigma$, it follows that 
$\bar \Psi = \Psi(\chi) = \sigma \chi$ with $\chi \in BV(\Omega;\{0,1\})$ and
$\int_\Omega 1\, \mathrm{d}|\nabla \chi| \leq C$.

Finally, taking $\psi = \chi$ in~\eqref{eq:assurdo} contradicts the strong convergence $\varphi_j \to \chi$  in $L^2(\Omega)$.
\end{proof}

\begin{lemma}{\cite[Section 5]{Schaetzle} (cf.~also~\cite{Luckhaus0})}
\label{lemma:Luckhaus}
    For $\psi \in L^1_0(\Omega;\{0,\pm 1\})$ and $\phi \in H^1(\Omega)$, we have
    \begin{equation*}
        \|\psi\|_{L^1_0(\Omega)} \le C \left(\|\psi + \phi \|_{L^1(\Omega)} + \|\psi + \phi \|^{1/2}_{L^1(\Omega)}\|\nabla \phi\|_{L^2(\Omega)}\right),
    \end{equation*}
    for some $C>0$ depending only on $\Omega$, where $L^1_0(\Omega)$ denotes the quotient space $L^1(\Omega)/\textnormal{span}\{1\}$.
\end{lemma}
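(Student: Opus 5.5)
The statement is Lemma~\ref{lemma:Luckhaus}: a Poincaré/BV-type interpolation inequality for $\psi\in\{0,\pm1\}$. My plan is to write $a:=\psi+\phi$ and use it to produce a constant $c$ with $\|\psi-c\|_{L^1}$ controlled.

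\emph{Step 1 (threshold).} Since $\psi$ takes values in $\{0,\pm1\}$, we have $|\psi-\phi|\ge 1/2$ wherever $|\phi-k|\ge 1/2$ for every $k\in\{0,\pm1\}$ other than $\psi$'s value. More usefully, $|\phi + \psi| $ being small forces $-\phi$ to be close to one of $\{0,\pm 1\}$. I would introduce the rounding map $r:\mathbb{R}\to\{0,\pm1\}$ (nearest point, with $|r(s)-s|\le|s-k|$ for every $k\in\{0,\pm1\}$). Then $|\psi-r(-\phi)|\le 2|\psi+\phi|$ pointwise. Indeed, if $r(-\phi)\ne\psi$ then $|{-\phi}-\psi|\ge 1/2$ and $|\psi-r(-\phi)|\le 2\le 4|\psi+\phi|$. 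Hence $\|\psi-r(-\phi)\|_{L^1}\le 4\|a\|_{L^1}$.

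\emph{Step 2 (Poincaré for $r(-\phi)$ via coarea).} It suffices to bound $\|r(-\phi)\|_{L^1_0}$. The function $r(-\phi)$ is a step function of $\phi$ with jumps at the levels $\pm1/2$. By the relative isoperimetric inequality in $\Omega$ (connected, smooth), we have $\|w\|_{L^1_0}\le C|Dw|(\Omega)$ for BV $w$. The perimeter of $\{-\phi>t\}$ for a single level $t$ is not controlled, however. I would therefore average over levels. Replace the thresholds $\pm1/2$ by $\pm s$ with $s\in(1/4,3/4)$, giving $r_s$, and apply Step 1 with constant $C$, still valid since the distance to a different value is at least $1/4$. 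By coarea, $\int_{1/4}^{3/4}|Dr_s(-\phi)|(\Omega)\,ds\le C\int_{\{1/4<|\phi|<3/4\}}|\nabla\phi|\,\dx$. On that set, $|\phi+\psi|\ge 1/4$, so its measure is at most $4\|a\|_{L^1}$. Cauchy--Schwarz then gives the bound $C\|a\|_{L^1}^{1/2}\|\nabla\phi\|_{L^2}$. Choosing a good level $s$ yields $\|r_s(-\phi)\|_{L^1_0}\le C\|a\|^{1/2}_{L^1}\|\nabla\phi\|_{L^2}$.

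\emph{Step 3.} Combine with the triangle inequality in $L^1_0$: $\|\psi\|_{L^1_0}\le\|\psi-r_s(-\phi)\|_{L^1}+\|r_s(-\phi)\|_{L^1_0}$. This gives the claim with $C$ depending only on $\Omega$ through the Poincaré--BV constant.

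The main obstacle is Step 2, that is, making the level averaging rigorous: that $r_s(-\phi)\in BV$ with total variation given by the perimeters of two superlevel sets for a.e.\ $s$, and the coarea bound for $H^1$ functions. This is standard once one writes $r_s(-\phi)=\chi_{\{-\phi>s\}}-\chi_{\{-\phi<-s\}}$.
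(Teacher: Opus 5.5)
Your argument is correct. The paper gives no proof of Lemma~\ref{lemma:Luckhaus}; it only refers to Sch\"atzle~\cite[Section~5]{Schaetzle} and Luckhaus~\cite{Luckhaus0}, so your proposal supplies a self-contained one.

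The two points that carry the proof both check out:
\begin{itemize}
\item For every $s\in(1/4,3/4)$ and each value of $\psi\in\{0,\pm1\}$, $r_s(-\phi)\neq\psi$ forces $|\psi+\phi|\ge 1/4$. Hence $\|\psi-r_s(-\phi)\|_{L^1}\le 8\|\psi+\phi\|_{L^1}$, uniformly in $s$.
\item The inclusion $\{1/4<|\phi|<3/4\}\subset\{|\psi+\phi|\ge 1/4\}$ holds. This is what lets Chebyshev and Cauchy--Schwarz turn the coarea integral into $2\|\psi+\phi\|_{L^1}^{1/2}\|\nabla\phi\|_{L^2}$.
\end{itemize}
Connectedness of $\Omega$ enters only through the Poincar\'e--Wirtinger step, and it is genuinely needed. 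On a disconnected domain, $\psi=\chi_{\Omega_1}$ (one component) with $\phi=-\psi$ violates the inequality.

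The remaining slips are cosmetic. The opening sentence of Step~1, about $|\psi-\phi|$, is garbled. Your ``indeed'' justifies the factor $4$, not $2$; the factor $2$ follows from $|\psi-r(-\phi)|\le|\psi+\phi|+|{-\phi}-r(-\phi)|\le 2|\psi+\phi|$ by the nearest-point property. In any case Step~1 is superseded by the $r_s$ version.

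You can avoid the level selection altogether by averaging your step functions. Set $\eta(y):=2\int_{1/4}^{3/4}r_s(y)\,\mathrm{d}s$. This is the odd, piecewise linear, $2$-Lipschitz function that vanishes on $[0,1/4]$ and equals $1$ on $[3/4,\infty)$. It fixes $\{0,\pm1\}$ and satisfies $\eta'=0$ off $\{1/4<|y|<3/4\}$. Consequently:
\begin{itemize}
\item $\eta(-\phi)\in H^1(\Omega)$;
\item $|\psi-\eta(-\phi)|=|\eta(\psi)-\eta(-\phi)|\le 2|\psi+\phi|$;
\item $\|\nabla\eta(-\phi)\|_{L^1}\le 2\,|\{1/4<|\phi|<3/4\}|^{1/2}\|\nabla\phi\|_{L^2}\le 4\|\psi+\phi\|_{L^1}^{1/2}\|\nabla\phi\|_{L^2}$.
\end{itemize}
The $W^{1,1}$ Poincar\'e--Wirtinger inequality then concludes.

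This Lipschitz-truncation variant needs only the chain rule in $H^1$. It avoids the coarea formula, finiteness of perimeter of a.e.\ level set, and the choice of a good level. Your version, by contrast, makes the geometric mechanism explicit: two level sets of $\phi$ with small relative perimeter in the transition region. Both give a constant $C$ depending only on the Poincar\'e constant of $\Omega$.
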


\begin{remark} 
  We observe that, in case of an unbounded domain $\Omega = \mathbb{R}^d$, the same result follows by readapting the proof of~\cite[Lemma 10]{KLM} under the assumption $\rho, \tilde \rho \geq c >0$. In particular, the space $L_0^1(\Omega)=L^1(\Omega)/\operatorname{span}\{1\}$ can be naturally identified with
$L^1(\Omega)$, since the zero function is the only constant belonging to $L^1(\mathbb{R}^d)$.
\end{remark}

\section*{Acknowledgements}
The authors thank Tim Laux for suggesting the problem and some initial discussions, and Harald Garcke for bringing Sch\"atzle's paper to their attention.
A.K.\ research has been supported by the Austrian Science Fund (FWF) through grants 10.55776/F65, 10.55776/P35359, 10.55776/Y1292. 
A.M.\ research is sponsored by the Alexander von Humboldt Foundation, which is acknowledged. A.M.\ is funded by the Deutsche Forschungsgemeinschaft (DFG,
German Research Foundation) - CRC 1720 - 539309657.

\section*{AI usage statement}
The results in Section~\ref{sec:strongcompvarphi} were obtained with assistance from ChatGPT 5.6 Plus through a series of chats.
The idea of adapting the arguments of~\cite{Schaetzle} to our setting is due to the authors, and ChatGPT 5.6 Plus was used only to assist  with the implementation of this adaptation.
The authors subsequently checked, verified, and independently developed all mathematical arguments.
All the proofs contained in this manuscript were written by the authors, who take full responsibility for the correctness of the statements.
This article does not contain AI-written text.

\bibliographystyle{abbrv}
\bibliography{biblio}

@article{AbelsMoser,
  author  = {Abels, Helmut and Moser, Maximilian},
  title   = {Convergence of the {Allen--Cahn} equation to the mean curvature flow with $90^\circ$-contact angle in {2D}},
  journal = {Interfaces and Free Boundaries},
  volume  = {21},
  number  = {3},
  pages   = {313--365},
  year    = {2019},
  doi     = {10.4171/IFB/425}
}

@article{AlfaroGarckeHilhorstMatanoSchaetzle,
  author  = {Alfaro, Matthieu and Garcke, Harald and Hilhorst, Danielle and Matano, Hiroshi and Sch{\"a}tzle, Reiner},
  title   = {Motion by anisotropic mean curvature as sharp interface limit of an inhomogeneous and anisotropic {Allen--Cahn} equation},
  journal = {Proceedings of the Royal Society of Edinburgh: Section A Mathematics},
  volume  = {140},
  number  = {4},
  pages   = {673--706},
  year    = {2010},
  doi     = {10.1017/S0308210508000541}
}

@article{AlikakosBatesChen,
  author  = {Alikakos, Nicholas D. and Bates, Peter W. and Chen, Xinfu},
  title   = {Convergence of the {Cahn--Hilliard} equation to the {Hele--Shaw} model},
  journal = {Archive for Rational Mechanics and Analysis},
  volume  = {128},
  number  = {2},
  pages   = {165--205},
  year    = {1994},
  doi     = {10.1007/BF00375025}
}

@book {AFP,
	AUTHOR = {Ambrosio, L. and Fusco, N. and Pallara, D.},
	TITLE = {Functions of bounded variation and free discontinuity
	problems},
	SERIES = {Oxford Mathematical Monographs},
	PUBLISHER = {The Clarendon Press, Oxford University Press, New York},
	YEAR = {2000},
	PAGES = {xviii+434},
}

@book {AGS,
	AUTHOR = {Ambrosio, L. and Gigli, N. and Savar\'e, G.},
	TITLE = {Gradient flows in metric spaces and in the space of
	probability measures},
	SERIES = {Lectures in Mathematics ETH Z\"urich},
	EDITION = {Second},
	PUBLISHER = {Birkh\"auser Verlag, Basel},
	YEAR = {2008},
	PAGES = {x+334},
}

@article{CaginalpChen,
  author  = {Caginalp, Gunduz and Chen, Xinfu},
  title   = {Convergence of the phase field model to its sharp interface limits},
  journal = {European Journal of Applied Mathematics},
  volume  = {9},
  number  = {4},
  pages   = {417--445},
  year    = {1998},
  doi     = {10.1017/S0956792598003520}
}

@article{CahnHilliard,
  author  = {Cahn, John W. and Hilliard, John E.},
  title   = {Free Energy of a Nonuniform System. {I}. Interfacial Free Energy},
  journal = {The Journal of Chemical Physics},
  volume  = {28},
  number  = {2},
  pages   = {258--267},
  year    = {1958},
  doi     = {10.1063/1.1744102}
}

@article{CanariusSchaetzle,
title = {Finiteness and Positivity Results for Global Minimizers of a Semilinear Elliptic Problem},
journal = {Journal of Differential Equations},
volume = {148},
number = {1},
pages = {212--229},
year = {1998},
url = {https://doi.org/10.1006/jdeq.1998.3457},
author = {Canarius, T. and Sch\"{a}tzle, R.},
}

@article{Chambolle-Laux, 
  title={Mullins-{S}ekerka as the {W}asserstein flow of the perimeter},
  author={Chambolle, A. and Laux, T.},
  journal={Proc. Amer. Math. Soc.},
  volume  = {149},
  number  = {7},
  pages   = {2943-2956},
  year={2021},
}

@article{Chen,
  author  = {Chen, Xinfu},
  title   = {Generation and propagation of interfaces for reaction-diffusion equations},
  journal = {Journal of Differential Equations},
  volume  = {96},
  number  = {1},
  pages   = {116--141},
  year    = {1992},
  doi     = {10.1016/0022-0396(92)90146-E}
}

@article{Chen96,
  author  = {Chen, Xinfu},
  title   = {Global asymptotic limit of solutions of the Cahn-Hilliard equation},
  journal = {Journal of Differential Equations},
  volume  = {44},
  number  = {2},
  pages   = {262--311},
  year    = {1996},
  doi     = {10.4310/jdg/1214458973}
}

@article{DeMottoniSchatz,
  author  = {De Mottoni, Piero and Schatzman, Michelle},
  title   = {Geometrical evolution of developed interfaces},
  journal = {Trans. Amer. Math. Soc.},
  volume  = {347},
  number  = {5},
  pages   = {1533--1589},
  year    = {1995},
  doi     = {10.1090/S0002-9947-1995-1672406-7}
}

@article{ElbarPoiatti,
  author  = {Elbar, Charles and Poiatti, Andrea},
  title   = {Weak solutions and sharp interface limit of the anisotropic {Cahn--Hilliard} equation with disparate mobility and inhomogeneous potential},
  journal = {Interfaces and Free Boundaries},
  year    = {2026},
  doi     = {10.4171/IFB/573},
  note    = {Published online first}
}

@article{EvansSonerSouganidis,
  author  = {Evans, L. C. and Soner, H. M. and Souganidis, P. E.},
  title   = {Phase transitions and generalized motion by mean curvature},
  journal = {Communications on Pure and Applied Mathematics},
  volume  = {45},
  number  = {9},
  pages   = {109-7-1123},
  year    = {1992},
  doi     = {10.1002/cpa.3160450903}
}

@article{FischerLauxSimon,
  author  = {Fischer, Julian and Laux, Tim and Simon, Theresa M.},
  title   = {Convergence rates of the {Allen--Cahn} equation to mean curvature flow: A short proof based on relative entropies},
  journal = {SIAM Journal on Mathematical Analysis},
  volume  = {52},
  number  = {6},
  pages   = {6222--6233},
  year    = {2020},
  doi     = {10.1137/20M1322182}
}

@article{FischerMarveggio,
  author  = {Fischer, Julian and Marveggio, Alice},
  title   = {Quantitative convergence of the vectorial {Allen--Cahn} equation towards multiphase mean curvature flow},
  journal = {Annales de l'Institut Henri Poincar{\'e} C, Analyse Non Lin{\'e}aire},
  volume  = {41},
  number  = {5},
  pages   = {1117--1178},
  year    = {2024},
  doi     = {10.4171/AIHPC/109}
}

@article{GanediMarveggioStinson,
  author  = {Ganedi, Likhit and Marveggio, Alice and Stinson, Kerrek},
  title   = {Convergence of a heterogeneous {Allen--Cahn} equation to weighted mean curvature flow},
  journal = {Advances in Calculus of Variations},
  volume  = {18},
  number  = {4},
  pages   = {1301--1325},
  year    = {2025},
  doi     = {10.1515/acv-2024-0123}
}

@book{GilbargTrudinger,
  title={Elliptic partial differential equations of second order},
  author={Gilbarg, D. and Trudinger, N.},
  volume={224},
  year={2001},
  EDITION = {Second},
  publisher={Springer}
}

@article{HenselLauxcontact,
  author  = {Hensel, Sebastian and Laux, Tim},
  title   = {{BV} solutions for mean curvature flow with constant contact angle: {Allen--Cahn} approximation and weak-strong uniqueness},
  journal = {Indiana University Mathematics Journal},
  volume  = {73},
  number  = {1},
  pages   = {111--148},
  year    = {2024},
  doi     = {10.1512/iumj.2024.73.9701}
}

@article{HenselLauxvarifold,
  author  = {Hensel, Sebastian and Laux, Tim},
  title   = {A new varifold solution concept for mean curvature flow: Convergence of the {Allen--Cahn} equation and weak-strong uniqueness},
  journal = {Journal of Differential Geometry},
  volume  = {130},
  number  = {1},
  pages   = {209--268},
  year    = {2025},
  doi     = {10.4310/jdg/1747065796}
}

@article{Ilmanen,
  author  = {Ilmanen, Tom},
  title   = {Convergence of the {Allen--Cahn} equation to {Brakke}'s motion by mean curvature},
  journal = {Journal of Differential Geometry},
  volume  = {38},
  number  = {2},
  pages   = {417--461},
  year    = {1993},
  doi     = {10.4310/jdg/1214454300}
}

@article {JKO,
    AUTHOR = {Jordan, R. and Kinderlehrer, D. and Otto, F.},
     TITLE = {The variational formulation of the {F}okker-{P}lanck equation},
   JOURNAL = {SIAM J. Math. Anal.},
    VOLUME = {29},
      YEAR = {1998},
    NUMBER = {1},
     PAGES = {1--17},
       DOI = {10.1137/S0036141096303359},
       URL = {https://doi.org/10.1137/S0036141096303359},
}

@article{JKM_Muskat,
	author  = {Jacobs, Matt and Kim, Inwon and M\'esz\'aros, Alp\'ar R.},
	title   = {Weak solutions to the {M}uskat problem with surface tension via optimal transport},
	journal = {Archive for Rational Mechanics and Analysis},
	volume  = {239},
	number  = {1},
	pages   = {389--430},
	year    = {2021},
	doi     = {10.1007/s00205-020-01579-3}
}

@article{KroemerLaux,
  author  = {Kroemer, Milan and Laux, Tim},
  title   = {The {Hele--Shaw} flow as the sharp interface limit of the {Cahn--Hilliard} equation with disparate mobilities},
  journal = {Communications in Partial Differential Equations},
  volume  = {47},
  number  = {12},
  pages   = {2444--2486},
  year    = {2022},
  doi     = {10.1080/03605302.2022.2129384}
}

@article{KLM,
	AUTHOR = {Kubin, A. and Laux, T. and Marveggio, A.},
	TITLE = {The {V}erigin problem with phase transition as a {W}asserstein flow},
	JOURNAL = {Preprint},
	YEAR = {2026},
	URL = {https://arxiv.org/abs/2601.20001},
    NOTE = {arxiv:2601.20001},
}

@article{LauxSimon,
  author  = {Laux, Tim and Simon, Theresa M.},
  title   = {Convergence of the {Allen--Cahn} equation to multiphase mean curvature flow},
  journal = {Communications on Pure and Applied Mathematics},
  volume  = {71},
  number  = {8},
  pages   = {1597--1647},
  year    = {2018},
  doi     = {10.1002/cpa.21747}
}

@article{LauxStinson,
author = {Laux, Tim and Stinson, Kerrek},
title = {Sharp interface limit of the {C}ahn--{H}illiard reaction model for lithium-ion batteries},
journal = {Mathematical Models and Methods in Applied Sciences},
volume = {33},
number = {12},
pages = {2557-2585},
year = {2023},
doi = {10.1142/S0218202523500550},
}

@article{LauxTakasao,
	AUTHOR = {Laux, T. and Takasao, K.},
	TITLE = {Convergence of a vector-valued Allen-Cahn system to Brakke's multiphase mean curvature flow},
	JOURNAL = {Preprint},
	YEAR = {2026},
    note = {arXiv:2608.26842},
}

@article{LauxUllrichStinson,
  author  = {Laux, Tim and Stinson, Kerrek and Ullrich, Clemens},
  title   = {Diffuse-interface approximation and weak--strong uniqueness of anisotropic mean curvature flow},
  journal = {European Journal of Applied Mathematics},
  volume  = {36},
  number  = {1},
  pages   = {82--142},
  year    = {2025},
  doi     = {10.1017/S0956792524000226}
}

@article{LuckhausModica,
	author  = {Luckhaus, Stephan and Modica, Luciano},
	title   = {The {G}ibbs--{T}hompson relation within the gradient theory of phase transitions.},
	journal = {Archive for Rational Mechanics and Analysis},
	volume  = {107},
	number  = {1},
	pages   = {71--83},
	year    = {1989},
	doi     = {10.1007/BF00251427}
}

@article{Luckhaus1,
	author  = {Luckhaus, Stephan},
	title   = {Solutions for the two-phase {S}tefan problem with the {G}ibbs-{T}homson law for the melting temperature},
	journal = {European Journal of Applied Mathematics},
	volume  = {1},
	number  = {2},
	pages   = {101--111},
	year    = {1990},
	doi     = {10.1017/S0956792500000103}
}

@techreport{Luckhaus0,
	author      = {Luckhaus, Stephan},
	title       = {The {S}tefan problem with {G}ibbs-{T}homson law},
	institution = {Sezione di Analisi Matematica e Probabilit\`{a}, Universit\`{a} di Pisa},
	type        = {Report},
	number      = {2.75 (591)},
	year        = {1991}
}

@article{LuckStur,
	author  = {Luckhaus, Stephan and Sturzenhecker, Thomas},
	title   = {Implicit Time Discretization for the Mean Curvature Flow Equation},
	journal = {Calculus of Variations and Partial Differential Equations},
	volume  = {3},
	number  = {2},
	pages   = {253--271},
	year    = {1995},
	doi     = {10.1007/BF01205007}
}

@article{MielkeRossiSavare,
	author = {Mielke, A. and Rossi, R. and Savar\'e, G.},
	title = {Nonsmooth analysis of doubly nonlinear evolution equations},
	journal = {Calculus of Variations and Partial Differential Equations},
	pages = {253--310},
	volume = {46},
	number = {1},
	year = {2013},
	url = {https://doi.org/10.1007/s00526-011-0482-z}
}

@article{MizunoTonegawa,
  author  = {Mizuno, Masashi and Tonegawa, Yoshihiro},
  title   = {Convergence of the {Allen--Cahn} equation with {Neumann} boundary conditions},
  journal = {SIAM Journal on Mathematical Analysis},
  volume  = {47},
  number  = {3},
  pages   = {1906--1932},
  year    = {2015},
  doi     = {10.1137/140987808}
}

@article {ModicaMortola,
    AUTHOR = {Modica, L. and Mortola, S.},
     TITLE = {Un esempio di {$\Gamma$}-convergenza},
   JOURNAL = {Boll. Un. Mat. Ital. B (5)},
  FJOURNAL = {Unione Matematica Italiana. Bollettino. B. Serie V},
    VOLUME = {14},
      YEAR = {1977},
    NUMBER = {1},
     PAGES = {285--299},
}

@article{Otto1,
	author  = {Otto, Felix},
	title   = {The geometry of dissipative evolution equations: the porous medium equation},
	journal = {Communications in Partial Differential Equations},
	volume  = {26},
	number  = {1-2},
	pages   = {101--174},
	year    = {2001},
	doi     = {10.1081/PDE-100002243}
}

@article{Piccoli-Rossi,
title = {Generalized Wasserstein Distance and its Application to Transport Equations with Source},
journal = {Archive for Rational Mechanics and Analysis},
volume = {211},
number = {1},
pages = {335--358},
year = {2014},
doi = {https://doi.org/10.1007/s00205-013-0669-x},
author = {Piccoli, Benedetto and Rossi, Francesco},
}

@article{PrussSim,
	author  = {Pr\"uss, Jan and Simonett, Gieri},
	title   = {The {V}erigin problem with and without phase transition},
	journal = {Interfaces and Free Boundaries},
	volume  = {20},
	number  = {1},
	pages   = {107--128},
	year    = {2018},
	doi     = {10.4171/IFB/398}
}

@article{PrussSimWielke,
	author  = {Pr{\"u}ss, Jan and Simonett, Gieri and Wilke, Mathias},
	title   = {The {R}ayleigh--{T}aylor instability for the {V}erigin problem with and without phase transition},
	journal = {Nonlinear Differential Equations and Applications},
	volume  = {26},
	number  = {3},
	pages   = {Art. 18, 35 pp.},
	year    = {2019},
	doi     = {10.1007/s00030-019-0564-8}
}

@article{RogerTonegawa,
  author  = {R{\"o}ger, Matthias and Tonegawa, Yoshihiro},
  title   = {Convergence of phase-field approximations to the {Gibbs--Thomson} law},
  journal = {Calculus of Variations and Partial Differential Equations},
  volume  = {32},
  number  = {1},
  pages   = {111--136},
  year    = {2008},
  doi     = {10.1007/s00526-007-0133-6}
}

@article{Rossi-Savare,
	author = {Rossi, R. and Savar\'e, G.},
	title = {Tightness, integral equicontinuity and compactness for evolution problems in {Banach} spaces},
	journal = {Annali della Scuola Normale Superiore di Pisa - Classe di Scienze},
	pages = {395--431},
	publisher = {Scuola normale superiore},
	volume = {Ser. 5, 2},
	number = {2},
	year = {2003},
	url = {https://www.numdam.org/item/ASNSP_2003_5_2_2_395_0/}
}

@article{RossiSavare2006,
	author = {Rossi, R. and Savar\'e, G.},
	title = {Gradient flows of non convex functionals in {Hilbert} spaces and applications},
	journal = {ESAIM: COCV},
	pages = {564--614},
	volume = {12},
	number = {3},
	year = {2006},
	url = {https://doi.org/10.1051/cocv:2006013}
}

@article{SandierSerfaty,
  author  = {Sandier, Etienne and Serfaty, Sylvia},
  title   = {{Gamma}-convergence of gradient flows with applications to {Ginzburg--Landau}},
  journal = {Communications on Pure and Applied Mathematics},
  volume  = {57},
  number  = {12},
  pages   = {1627--1672},
  year    = {2004},
  doi     = {10.1002/cpa.20046}
}

@book{Santabook, 
	title={Optimal Transport for Applied Mathematicians},
	author={Santambrogio, F.},
	publisher={Birkh\"auser Cham},
	year={2016}
}

@article{Schaetzle,
title = {The Quasistationary Phase Field Equations with Neumann Boundary Conditions},
journal = {Journal of Differential Equations},
volume = {162},
number = {2},
pages = {473-503},
year = {2000},
doi = {https://doi.org/10.1006/jdeq.1999.3679},
url = {https://www.sciencedirect.com/science/article/pii/S0022039699936793},
author = {Reiner Sch\"{a}tzle},
}

@article{Steinke,
  author  = {Steinke, Pascal},
  title   = {Convergence of {Allen--Cahn} equations to {De Giorgi}'s multiphase mean curvature flow},
  journal = {Journal of Evolution Equations},
  volume  = {26},
  pages   = {66},
  year    = {2026},
  doi     = {10.1007/s00028-026-01203-z}
}

\end{document}